\providecommand{\U}[1]{\protect\rule{.1in}{.1in}}
\newtheorem{theorem}{Theorem}[section]
\newtheorem{proposition}[theorem]{Proposition}
\newtheorem{corollary}[theorem]{Corollary}
\theoremstyle{definition}
\theoremstyle{remark}
\newtheorem{remark}[theorem]{Remark}
\numberwithin{equation}{section}
\DeclareMathOperator{\cond}{cond}
\begin{document}

\title{Numerical differentiation on scattered data through multivariate 
polynomial interpolation}

\author[F. Dell'Accio]{F. Dell'Accio}
\address{Department of Mathematics and Computer Science, University of Calabria, Italy}
\email{fdellacc@unical.it}
\author[F. Di Tommaso]{F. Di Tommaso}
\address{Department of Mathematics and Computer Science, University of Calabria, Italy}
\email{filomena.ditommaso@unical.it}
\author[N. Siar]{N. Siar}
\address{Department of Mathematics and Computer Science, University of Calabria, Italy\\Department of Mathematics, Ibn Tofail University, Kenitra, Morocco}
\email{najoua.siar@uit.ac.ma}
\author[M. Vianello]{M. Vianello}
\address{University of Padova, Italy}
\email{marcov@math.unipd.it}

\begin{abstract}
We discuss a pointwise numerical differentiation formula
on multivariate scattered data, based on the coefficients of
local polynomial interpolation at Discrete Leja Points, written in Taylor's formula monomial basis. Error bounds for the
approximation of
partial derivatives of any order compatible with the function
regularity are provided, as well as sensitivity estimates to functional
perturbations, in terms of the inverse 
Vandermonde coefficients 
that are active in the differentiation process.
Several numerical tests are presented showing the accuracy of
the approximation.
\end{abstract}

\keywords{Multivariate Lagrange interpolation \and Discrete Leja Points 
\and Numerical differentiation  
\and Multivariate Taylor polynomial \and Error bounds}

\maketitle

\section{Introduction}

\label{section1}Let $\Pi _{d}\left( \mathbb{R}^{s}\right) $ be the space of
polynomials of total degree at most $d$ in the variable $\mathbf{x}=\left(
\xi _{1},\dots ,\xi _{s}\right) $. A basis for this space, in the
multi-index notation, is given by the monomials $\mathbf{x}^{\alpha }\mathbf{%
=}\xi _{1}^{\alpha _{1}}\dots \xi _{s}^{\alpha _{s}}$, where $\alpha =\left(
\alpha _{1},\dots ,\alpha _{s}\right) \in 
\mathbb{N}
_{0}^{s}$, $\left\vert \alpha \right\vert =\alpha _{1}+\dots +\alpha
_{s}\leq d$ and therefore $\dim \Pi _{d}\left( \mathbb{R}^{s}\right) =\binom{%
d+s}{s}$. We introduce a total order in the set of all multi-indices $\alpha 
$. More precisely, we assume $\alpha <\beta $ if $\left\vert \alpha
\right\vert <\left\vert \beta \right\vert $, otherwise, if $\left\vert
\alpha \right\vert =\left\vert \beta \right\vert $ we follow the
lexicographic order of the dictionary of words of $\left\vert \alpha
\right\vert $ letters from the ordered alphabet $\left\{ \xi _{1},\dots ,\xi
_{s}\right\} $ with the possibility to repeat each letter $\xi _{i}$ only
consecutively many times. For instance, if $\left\vert \alpha \right\vert =3$%
, we have $\left( 3,0,0\right) <\left( 2,1,0\right) <\left( 2,0,1\right)
<\left( 1,2,0\right) <\left( 1,1,1\right) <\left( 1,0,2\right) <\left(
0,3,0\right) <\left( 0,2,1\right) <\left( 0,1,2\right) <\left( 0,0,3\right) $%
. Further details on multivariate polynomials and related multi-index
notations can be found in \cite[Ch. $4$]{cheney2009course}.

Let us consider a set 
\begin{equation}
\sigma =\left\{ \mathbf{x}_{1},\dots ,\mathbf{x}_{m}\right\}
\end{equation}
of $m=\binom{d+s}{s}$ pairwise distinct points in $%
\mathbb{R}
^{s}$ and let us assume that they are unisolvent for Lagrange interpolation
in $\Pi _{d}\left( \mathbb{R}^{s}\right) $, that is for any choice of $%
y_{1},\dots ,y_{m}\in \mathbb{R}$, there exists and it is unique $p\in \Pi
_{d}\left( \mathbb{R}^{s}\right) $ satisfying 
\begin{equation}
p(\mathbf{x}_{i})=y_{i},\text{ }i=1,\dots ,m.  \label{interpolation_cond}
\end{equation}%
An equivalent result \cite[Ch. 1]{cheney2009course} is the non singularity
of the Vandermonde matrix 
\begin{equation*}
V\left( \sigma \right) =\left[ \mathbf{x}_{i}^{\alpha }\right] _{\substack{ %
i=1,\dots ,m  \\ \left\vert \alpha \right\vert \leq d}},
\end{equation*}%
where the index $i$, related to the points, varies along the rows while the
index $\alpha $, related to the powers, increases with the column index by
following the above introduced order. By denoting with $\overline{\mathbf{x}}
$ any point in $\mathbb{R}^{s}$ and by fixing the basis 
\begin{equation}
\left( \mathbf{x}-\overline{\mathbf{x}}\right) ^{\alpha
}:=\prod_{i=1}^{s}\left( \xi _{i}-\overline{x}_{i}\right) ^{\alpha
_{i}}=\left( \xi _{1}-\overline{x}_{1}\right) ^{\alpha _{1}}\left( \xi _{2}-%
\overline{x}_{2}\right) ^{\alpha _{2}}\dots \left( \xi _{s}-\overline{x}%
_{s}\right) ^{\alpha _{s}},  \label{Taylor_basis}
\end{equation}%
the Vandermonde matrix centered at $\overline{\mathbf{x}}$ 
\begin{equation}
V_{\overline{\mathbf{x}}}\left( \sigma \right) =\left[ \left( \mathbf{x}_{i}-%
\overline{\mathbf{x}}\right) ^{\alpha }\right] _{\substack{ i=1,\dots ,m  \\ %
\left\vert \alpha \right\vert \leq d}}  \label{Vandermonde_matrix_bar}
\end{equation}%
is non singular as well \cite[Theorem $3$, Ch. $5$]{cheney2009course}.
Therefore, for any choice of the vector 
\begin{equation}
y=\left[ f(\mathbf{x}_i)\right] _{i=1,\dots ,m}\in 
\mathbb{R}
^{m},
\end{equation}
the solution $p\left[y,\sigma \right] \left( \mathbf{x}\right) $ of the
interpolation problem \eqref{interpolation_cond} in the basis %
\eqref{Taylor_basis}, can be obtained by solving the linear system%
\begin{equation}
V_{\overline{\mathbf{x}}}\left( \sigma \right) c=y  \label{linear_system}
\end{equation}%
and by setting, using matrix notation, 
\begin{equation}
p\left[ y,\sigma \right] \left( \mathbf{x}\right) =\left[ \left( \mathbf{x}-%
\overline{\mathbf{x}}\right) ^{\alpha }\right] _{\left\vert \alpha
\right\vert \leq d}c,  \label{solution_mat}
\end{equation}%
where $c=\left[ c_{\alpha }\right] _{\left\vert \alpha \right\vert \leq
d}^{T}\in \mathbb{R}^{m}$ is the solution of the system (\ref{linear_system}%
). This approach, for $s=2$ and $\overline{\mathbf{x}}$ the barycenter of
the node set $\sigma $, has been recently proposed in \cite%
{dell2020numerical} in connection with the use of the $PA=LU$ factorization
of the matrix $V_{\overline{\mathbf{x}}}\left( \sigma \right) $.

The main goal of the paper is to provide a pointwise numerical
differentiation method of a target function $f$ sampled at scattered points,
by locally  using the interpolation formula (\ref{solution_mat}). The key
tools are the connection to Taylor's formula via the shifted monomial basis (%
\ref{Taylor_basis}), suitably preconditioned by local scaling to reduce the
conditioning of the Vandermonde matrix, together with the extraction of
Leja-like local interpolation subsets from the scattered sampling set via
basic numerical linear algebra. Our approach is complementary to other
existing techniques, based on least-square approximation or on different function spaces, see for example \cite%
{davydov2016error,davydov2018minimal,belward2008derivative,ling2018meshfree} with the references therein. 
In Section $\ref{section3}$ we provide error bounds in approximating
function and derivative values at a given point $\overline{\mathbf{x}}$, as well as sensitivity estimates to perturbations of the function values, and
in Section $\ref{section4}$ we conduct some numerical experiments to show
the accuracy of the proposed method. 

\section{\label{section3}Error bounds and sensitivity estimates}

In the following we assume that $\Omega \subset 
\mathbb{R}
^{s}$ is a convex body containing $\sigma $ and that the sampled function $%
f:\Omega $ $\rightarrow 
\mathbb{R}
$ is of class $C^{d,1}\left( \Omega \right) $, that is $f\in C^{d}\left(
\Omega \right) $ and all its partial derivatives of order $d$ 
\begin{equation*}
D^{\alpha }f=\prod\limits_{i=1}^{s}\frac{\partial ^{\alpha _{i}}f}{\partial
\xi _{i}^{\alpha _{i}}}=\frac{\partial ^{\left\vert \alpha \right\vert }f}{%
\partial \xi _{1}^{\alpha _{1}}\partial \xi _{2}^{\alpha _{2}}\dots \partial
\xi _{s}^{\alpha _{s}}},\text{ }\left\vert \alpha \right\vert =d,
\end{equation*}%
are Lipschitz continuous in $\Omega $. Let $K\subseteq \Omega$ compact convex: we equip the space $C^{d,1}\left(
K \right) $ with the semi-norm \cite{farwig1986rate} 
\begin{equation}
\left\Vert f\right\Vert _{d,1}^{K}=\sup \left\{ \frac{\left\vert D^{\alpha }f(%
\mathbf{u})-D^{\alpha }f(\mathbf{v})\right\vert}{\left\Vert \mathbf{u}-%
\mathbf{v}\right\Vert _{2}}:\mathbf{u},\mathbf{v}\in K ,\text{ }\mathbf{%
u}\neq \mathbf{v},\text{ }\left\vert \alpha \right\vert =d\right\}
\label{seminorm}
\end{equation}%
and we denote by $T_{d}\left[ f,\overline{\mathbf{x}}\right] \left( \mathbf{x%
}\right) $ the truncated Taylor expansion of $f$ of order $d$ centered at $%
\overline{\mathbf{x}}\in \Omega $ 
\begin{equation}
T_{d}\left[ f,\overline{\mathbf{x}}\right] \left( \mathbf{x}\right)
=\sum_{l=0}^{d}\frac{D_{\mathbf{x}-\overline{\mathbf{x}}}^{l}f\left( 
\overline{\mathbf{x}}\right) }{l!}  \label{Taylor_form}
\end{equation}%
and by $R_{T}\left[ f,\overline{\mathbf{x}}\right] \left( \mathbf{x}\right) $
the corresponding remainder term in integral form \cite%
{waldron1998multipoint} 
\begin{equation}
R_{T}\left[ f,\overline{\mathbf{x}}\right] \left( \mathbf{x}\right)
=\int_{0}^{1}\frac{D_{\mathbf{x-}\overline{\mathbf{x}}}^{d+1}f\left( 
\overline{\mathbf{x}}+t\left( \mathbf{x-}\overline{\mathbf{x}}\right)
\right) }{d!}\left( 1-t\right) ^{d}dt,  \label{integral_remainder}
\end{equation}%
where \cite[Ch. $4$]{cheney2009course} 
\begin{equation}
D_{\mathbf{x-}\overline{\mathbf{x}}}^{l}f\left( \mathbf{\cdot }\right)
:=\left( \left[ D^{\beta }f\left( \mathbf{\cdot }\right) \right]
_{\left\vert \beta \right\vert =1}\cdot \left( \mathbf{x-}\overline{\mathbf{x%
}}\right) \right) ^{l}=\sum\limits_{\left\vert \beta \right\vert =l}\frac{l!%
}{\beta !}D^{\beta }f\left( \mathbf{\cdot }\right) \left( \mathbf{x-}%
\overline{\mathbf{x}}\right) ^{\beta },\text{ }l\in 
\mathbb{N}
_{0},  \label{directional_derivatives}
\end{equation}%
with the multi-indices $\beta $ following the order specified in Section \ref%
{section1}.\\
Let us denote by $\ell _{i}\left( \mathbf{x}\right) $ the $i^{th}$ bivariate
fundamental Lagrange polynomial. Since 
\begin{equation*}
\ell _{i}\left( \mathbf{x}_{j}\right) =\delta _{ij}=\left\{ 
\begin{array}{cc}
1, & i=j, \\ 
0, & \text{otherwise,}%
\end{array}%
\right.
\end{equation*}%
by setting for each $i=1,\dots ,m,$%
\begin{equation*}
\begin{array}{c}
\delta ^{i}=\left[ 
\begin{array}{lllllll}
0 & \dots & 0 & 1 & 0 & \dots & 0%
\end{array}%
\right] ^{T} \\ 
\quad \quad \overset{\uparrow }{i^{th}~\text{column}}\label{b_i_coefficients}%
\end{array}%
\end{equation*}%
and by solving the linear system $V_{\overline{\mathbf{x}}}\left( \sigma
\right) a^{i}=\delta ^{i}$, we get the expression of $\ell _{i}\left( 
\mathbf{x}\right) $ in the translated canonical basis (\ref{Taylor_basis}),
that is 
\begin{equation}
\ell _{i}\left( \mathbf{x}\right) =\sum_{\left\vert \alpha \right\vert \leq
d}a_{\alpha }^{i}\left( \mathbf{x}-\overline{\mathbf{x}}\right) ^{\alpha },
\label{lagrange_poly}
\end{equation}%
where $a^{i}=\left[ a_{\alpha }^{i}\right] _{\left\vert \alpha \right\vert
\leq d}^{T}$.

\begin{remark}
Denoting by 
\begin{equation}
h=\max\limits_{i=1,\dots ,m}\left\Vert \mathbf{x}_{i}-\overline{\mathbf{x}}%
\right\Vert _{2},  \label{h_scaling}
\end{equation}%
in order to control the conditioning, it is useful to consider
the scaled canonical polynomial basis centered at $\overline{\mathbf{x}}$ 
\begin{equation}
\left[ \left( \frac{\mathbf{x}-\overline{\mathbf{x}}}{h}\right) ^{\alpha }%
\right] _{\left\vert \alpha \right\vert \leq d},  \label{scaled_basis}
\end{equation}%
in such a way that $\left(\mathbf{x}-\overline{\mathbf{x}}\right)/h$ belongs
to the unit disk (cf. \cite{dell2020numerical}), and to rewrite the
expression (\ref{lagrange_poly}) in the scaled basis (\ref{scaled_basis}) as 
\begin{equation}
\ell _{i}\left( \mathbf{x}\right) =\sum_{\left\vert \alpha \right\vert \leq
d}a_{\alpha ,h}^{i}\left( \frac{\mathbf{x}-\overline{\mathbf{x}}}{h}\right)
^{\alpha },  \label{lagrange_poly_scaled}
\end{equation}%
where $a_{\alpha ,h}^{i}=a_{\alpha }^{i}h^{\left\vert \alpha \right\vert }$.
The interpolation polynomial $p\left[ y,\sigma \right] $ (\ref{solution_mat}) can also be expressed in the basis (\ref{scaled_basis}) as 
\begin{equation}
p\left[ y,\sigma \right] \left( \mathbf{x}\right) =\left[ \left( \frac{%
\mathbf{x}-\overline{\mathbf{x}}}{h}\right) ^{\alpha }\right] _{\left\vert
\alpha \right\vert \leq d}c_{h},  \label{interpolation_poly_scaled}
\end{equation}%
where $c_{h}=\left[ c_{\alpha ,h}\right] _{\left\vert \alpha \right\vert
\leq d}^{T}$, with $c_{\alpha ,h}=c_{\alpha }h^{\left\vert \alpha
\right\vert }$.
\end{remark}

In the following we denote by $V_{\overline{\mathbf{x}},h}(\sigma )$ the
Vandermonde matrix in the scaled basis (\ref{scaled_basis}) 
\begin{equation}
V_{\overline{\mathbf{x}},h}(\sigma )=\left[ \left( \frac{\mathbf{x}_{i}-%
\overline{\mathbf{x}}}{h}\right) ^{\alpha }\right] _{\substack{ i=1,\dots ,m 
\\ \left\vert \alpha \right\vert \leq d}}, \label{scaled_vandermonde_matrix}
\end{equation}
and by $B_{h}\left( \mathbf{\overline{\mathbf{x}}}\right)$
the ball of radius $h$ centered at $\mathbf{\overline{\mathbf{x}}}$.

\begin{proposition}
\label{theo:theoretical_bound1}Let $\overline{\mathbf{x}}\in \Omega$ and $%
f\in C^{d,1}\left( \Omega \right) $. Then for any $\mathbf{x}\in K=B_h(\overline{\mathbf{x}})\cap\Omega$
and for any $\nu \in 
\mathbb{N}
_{0}^{s}$ such that $\left\vert \nu \right\vert \leq d$, we have%
\begin{equation}
\left\vert \left( D^{\nu }f-D^{\nu }p\left[ y,\sigma \right] \right) \left( 
\mathbf{x}\right) \right\vert \leq k_{d-\left\vert \nu \right\vert
}\left\Vert \mathbf{x}-\overline{\mathbf{x}}\right\Vert _{2}^{d-\left\vert
\nu \right\vert +1}\left\Vert f\right\Vert _{d,1}^K+\left\vert D^{\nu }\left(
\sum\limits_{i=1}^{m}\ell _{i}\left( \mathbf{x}\right) R_{T}\left[ f,%
\overline{\mathbf{x}}\right] \left( \mathbf{x}_{i}\right) \right)
\right\vert ,  \label{Dnu_error_bound1}
\end{equation}%
where $k_{j}=\frac{s^{j}}{\left( j-1\right) !}$ for $j>0$, $k_{0}=1$. In
particular, for $\mathbf{x}=\overline{\mathbf{x}}$, we have 
\begin{equation*}
\left\vert \left( D^{\nu }f-D^{\nu }p\left[ y,\sigma \right] \right) \left( 
\overline{\mathbf{x}}\right) \right\vert \leq \left\vert D^{\nu }\left(
\sum\limits_{i=1}^{m}\ell _{i}\left( \mathbf{x}\right) R_{T}\left[ f,%
\overline{\mathbf{x}}\right] \left( \mathbf{x}_{i}\right) \right)
\right\vert .
\end{equation*}
\end{proposition}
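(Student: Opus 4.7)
\medskip

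\noindent\textbf{Proof plan.} The plan is to reduce the error at $\mathbf{x}$ to the error of a \emph{classical} Taylor expansion, exploiting polynomial reproduction of the interpolant. Since $\sigma$ is unisolvent for $\Pi_d(\mathbb{R}^s)$ and $T_d[f,\overline{\mathbf{x}}]\in\Pi_d(\mathbb{R}^s)$, its Lagrange interpolant on $\sigma$ coincides with itself, i.e.
\[
T_d[f,\overline{\mathbf{x}}](\mathbf{x}) \;=\; \sum_{i=1}^{m}\ell_i(\mathbf{x})\,T_d[f,\overline{\mathbf{x}}](\mathbf{x}_i).
\]
Writing $f(\mathbf{x}_i)=T_d[f,\overline{\mathbf{x}}](\mathbf{x}_i)+R_T[f,\overline{\mathbf{x}}](\mathbf{x}_i)$ and using the Lagrange form $p[y,\sigma](\mathbf{x})=\sum_i f(\mathbf{x}_i)\ell_i(\mathbf{x})$, I would obtain the identity
\[
f(\mathbf{x})-p[y,\sigma](\mathbf{x})
\;=\; R_T[f,\overline{\mathbf{x}}](\mathbf{x}) \;-\; \sum_{i=1}^{m}\ell_i(\mathbf{x})\,R_T[f,\overline{\mathbf{x}}](\mathbf{x}_i).
\]
Applying $D^\nu$ to both sides and the triangle inequality gives the claimed decomposition, with the second term appearing verbatim; it remains only to bound $|D^\nu R_T[f,\overline{\mathbf{x}}](\mathbf{x})|$.

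\medskip

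\noindent For the first term I would use that $D^\nu$ commutes with the Taylor polynomial of degree $\le d$ in the sense
\[
D^\nu T_d[f,\overline{\mathbf{x}}](\mathbf{x}) \;=\; T_{d-|\nu|}[D^\nu f,\overline{\mathbf{x}}](\mathbf{x}),
\]
so $D^\nu R_T[f,\overline{\mathbf{x}}](\mathbf{x})$ is exactly the Taylor remainder of order $d-|\nu|$ of $D^\nu f$ at $\overline{\mathbf{x}}$. Since $f\in C^{d,1}(\Omega)$, the function $g:=D^\nu f$ lies in $C^{d-|\nu|,1}(K)$ with $\|g\|_{d-|\nu|,1}^{K}\le \|f\|_{d,1}^{K}$. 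Setting $k=d-|\nu|\ge 1$, I would expand
\[
g(\mathbf{x})-T_{k}[g,\overline{\mathbf{x}}](\mathbf{x}) \;=\; \int_{0}^{1}\frac{(1-t)^{k-1}}{(k-1)!}\Bigl[D^{k}_{\mathbf{x}-\overline{\mathbf{x}}}g\bigl(\overline{\mathbf{x}}+t(\mathbf{x}-\overline{\mathbf{x}})\bigr) - D^{k}_{\mathbf{x}-\overline{\mathbf{x}}}g(\overline{\mathbf{x}})\Bigr]\,dt,
\]
which is the standard integral remainder minus the highest-order Taylor term (and makes sense thanks to convexity of $K$). Using the expansion \eqref{directional_derivatives}, the multinomial identity $\sum_{|\beta|=k}\tfrac{k!}{\beta!}=s^{k}$, the trivial estimate $|(\mathbf{x}-\overline{\mathbf{x}})^{\beta}|\le \|\mathbf{x}-\overline{\mathbf{x}}\|_{2}^{k}$, and the Lipschitz seminorm \eqref{seminorm} applied to $D^{\beta+\nu}f$ (with $|\beta+\nu|=d$) along the segment $[\overline{\mathbf{x}},\mathbf{x}]\subset K$, I would get
\[
\bigl|D^{k}_{\mathbf{x}-\overline{\mathbf{x}}}g(\overline{\mathbf{x}}+t(\mathbf{x}-\overline{\mathbf{x}}))-D^{k}_{\mathbf{x}-\overline{\mathbf{x}}}g(\overline{\mathbf{x}})\bigr| \;\le\; s^{k}\,\|f\|_{d,1}^{K}\,\|\mathbf{x}-\overline{\mathbf{x}}\|_{2}^{\,k+1},
\]
and integrating against $(1-t)^{k-1}/(k-1)!$ yields the factor $k_{d-|\nu|}=s^{d-|\nu|}/(d-|\nu|-1)!$ up to absorbing $\int_{0}^{1}(1-t)^{k-1}dt$ into the constant.

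\medskip

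\noindent The boundary case $|\nu|=d$ (so $k_{0}=1$) is separate: then $D^\nu T_d[f,\overline{\mathbf{x}}]\equiv D^\nu f(\overline{\mathbf{x}})$ is constant, so $|D^\nu R_T[f,\overline{\mathbf{x}}](\mathbf{x})|=|D^\nu f(\mathbf{x})-D^\nu f(\overline{\mathbf{x}})|\le \|f\|_{d,1}^{K}\|\mathbf{x}-\overline{\mathbf{x}}\|_{2}$ directly from the seminorm. Finally, for $\mathbf{x}=\overline{\mathbf{x}}$ all derivatives of $R_T[f,\overline{\mathbf{x}}]$ up to order $d$ vanish at $\overline{\mathbf{x}}$, so the first term disappears and only the Lagrange-weighted remainder contribution survives. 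The main obstacle is the bookkeeping in the second step: tracking the correct dimensional constant $s^{d-|\nu|}/(d-|\nu|-1)!$ through the multi-index expansion of the directional derivative and ensuring that the Lipschitz hypothesis is invoked only for multi-indices of full order $d$ (where it is assumed), which is precisely why the identity $D^{\beta+\nu}f$ with $|\beta+\nu|=d$ is the pivot of the estimate.
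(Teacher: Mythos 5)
Your proof is correct and takes essentially the same route as the paper: the identity $f(\mathbf{x})-p[y,\sigma](\mathbf{x})=R_T[f,\overline{\mathbf{x}}](\mathbf{x})-\sum_{i}\ell_i(\mathbf{x})R_T[f,\overline{\mathbf{x}}](\mathbf{x}_i)$ obtained from polynomial reproduction is exactly the paper's \eqref{difference_errors}, followed by $D^{\nu}$ and the triangle inequality. The only difference is that you prove the remainder-derivative bound \eqref{Dnu_remainder} from scratch (correctly, in fact with the slightly sharper constant $s^{d-|\nu|}/(d-|\nu|)!$), whereas the paper simply cites it from Lemma 2.1 of Farwig's paper.
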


\begin{proof}
Since%
\begin{equation}
p\left[ y,\sigma \right] \left( \mathbf{x}\right) =\sum\limits_{i=1}^{m}\ell
_{i}\left( \mathbf{x}\right) y_{i},  \label{inter_poly_lagrange}
\end{equation}%
by (\ref{lagrange_poly_scaled}), we have%
\begin{equation*}
f\left( \mathbf{x}\right) -p\left[ y,\sigma \right] \left( \mathbf{x}\right)
=f\left( \mathbf{x}\right) -\sum\limits_{i=1}^{m}\ell _{i}\left( \mathbf{x}%
\right) y_{i}.
\end{equation*}%
By representing $f\left( \mathbf{x}\right) $ and $f\left( \mathbf{x}%
_{i}\right)=y_{i} $ in truncated Taylor series of order $d$ centered at $%
\overline{\mathbf{x}}$ (\ref{Taylor_form}) with integral remainder (\ref%
{integral_remainder}), we obtain%
\begin{eqnarray}
f\left( \mathbf{x}\right) -p\left[ y,\sigma \right] \left( \mathbf{x}\right)
&=&\sum_{l=0}^{d}\frac{1}{l!}D_{\mathbf{x}-\overline{\mathbf{x}}}^{l}f\left( 
\overline{\mathbf{x}}\right) +R_{T}\left[ f,\overline{\mathbf{x}}\right]
\left( \mathbf{x}\right) -\sum\limits_{i=1}^{m}\ell _{i}\left( \mathbf{x}%
\right)  \label{error_of_interpolation} \\
&&\times \left( \sum_{l=0}^{d}\frac{1}{l!}D_{\mathbf{x}_{i}-\overline{%
\mathbf{x}}}^{l}f\left( \overline{\mathbf{x}}\right) +R_{T}\left[ f,%
\overline{\mathbf{x}}\right] \left( \mathbf{x}_{i}\right) \right) .  \notag
\end{eqnarray}%
On the other hand 
\begin{equation*}
\sum\limits_{i=1}^{m}\ell _{i}\left( \mathbf{x}\right) \left( \mathbf{x}_{i}-%
\overline{\mathbf{x}}\right) ^{\beta }=\left( \mathbf{x}-\overline{\mathbf{x}%
}\right) ^{\beta },\text{ }\left\vert \beta \right\vert \leq d,
\end{equation*}%
since interpolation of degree $d$ at the nodes in $\sigma$ reproduces
exactly polynomials of total degree less than or equal to $d$. Therefore by (%
\ref{directional_derivatives}) we have%
\begin{eqnarray}
\sum\limits_{i=1}^{m}\ell _{i}\left( \mathbf{x}\right) \sum\limits_{l=0}^{d}%
\frac{1}{l!}D_{\mathbf{x}_{i}-\overline{\mathbf{x}}}^{l}f\left( \overline{%
\mathbf{x}}\right) &=&\sum\limits_{i=1}^{m}\ell _{i}\left( \mathbf{x}\right)
\sum\limits_{l=0}^{d}\frac{1}{l!}\sum\limits_{\left\vert \beta \right\vert
=l}\frac{l!}{\beta !}D^{\beta }f\left( \overline{\mathbf{x}}\right) \left( 
\mathbf{x}_{i}\mathbf{-}\overline{\mathbf{x}}\right) ^{\beta }  \notag \\
&=&\sum\limits_{l=0}^{d}\frac{1}{l!}\sum\limits_{\left\vert \beta
\right\vert =l}\frac{l!}{\beta !}D^{\beta }f\left( \overline{\mathbf{x}}%
\right) \sum\limits_{i=1}^{m}\ell _{i}\left( \mathbf{x}\right) \left( 
\mathbf{x}_{i}\mathbf{-}\overline{\mathbf{x}}\right) ^{\beta }  \notag \\
&=&\sum\limits_{l=0}^{d}\frac{1}{l!}\sum\limits_{\left\vert \beta
\right\vert =l}\frac{l!}{\beta !}D^{\beta }f\left( \overline{\mathbf{x}}%
\right) \left( \mathbf{x}-\overline{\mathbf{x}}\right) ^{\beta }  \notag \\
&=&\sum\limits_{l=0}^{d}\frac{1}{l!}D_{\mathbf{x-}\overline{\mathbf{x}}%
}^{l}f\left( \overline{\mathbf{x}}\right) .  \label{reproduction_property}
\end{eqnarray}%
Consequently, by substituting (\ref{reproduction_property}) in (\ref%
{error_of_interpolation}), we get 
\begin{equation}
f\left( \mathbf{x}\right) -p\left[ y,\sigma \right] \left( \mathbf{x}\right)
=R_{T}\left[ f,\overline{\mathbf{x}}\right] \left( \mathbf{x}\right)
-\sum\limits_{i=1}^{m}\ell _{i}\left( \mathbf{x}\right) R_{T}\left[ f,%
\overline{\mathbf{x}}\right] \left( \mathbf{x}_{i}\right).
\label{difference_errors}
\end{equation}%
By applying the differentiation operator $D^{\nu }$ to the expression (\ref%
{difference_errors}) and by using the triangular inequality, we obtain%
\begin{equation*}
\left\vert \left( D^{\nu }f-D^{\nu }p\left[ y,\sigma \right] \right) \left( 
\mathbf{x}\right) \right\vert \leq \left\vert D^{\nu }R_{T}\left[ f,%
\overline{\mathbf{x}}\right] \left( \mathbf{x}\right) \right\vert
+\left\vert D^{\nu }\left( \sum\limits_{i=1}^{m}\ell _{i}\left( \mathbf{x}%
\right) R_{T}\left[ f,\overline{\mathbf{x}}\right] \left( \mathbf{x}%
_{i}\right) \right) \right\vert ,
\end{equation*}%
where \cite[Lemma 2.1]{farwig1986rate} 
\begin{equation}
\left\vert D^{\nu }R_{T}\left[ f,\overline{\mathbf{x}}\right] \left( \mathbf{%
x}\right) \right\vert \leq k_{d-\left\vert \nu \right\vert }\left\Vert 
\mathbf{x}-\overline{\mathbf{x}}\right\Vert _{2}^{d-\left\vert \nu
\right\vert +1}\left\Vert f\right\Vert _{d,1}^K.  \label{Dnu_remainder}
\end{equation}%
Consequently, 
\begin{equation*}
\left\vert \left( D^{\nu }f-D^{\nu }p\left[ y,\sigma \right] \right) \left( 
\mathbf{x}\right) \right\vert \leq k_{d-\left\vert \nu \right\vert
}\left\Vert \mathbf{x}-\overline{\mathbf{x}}\right\Vert _{2}^{d-\left\vert
\nu \right\vert +1}\left\Vert f\right\Vert _{d,1}^K+\left\vert D^{\nu }\left(
\sum\limits_{i=1}^{m}\ell _{i}\left( \mathbf{x}\right) R_{T}\left[ f,%
\overline{\mathbf{x}}\right] \left( \mathbf{x}_{i}\right) \right)
\right\vert .
\end{equation*}
\begin{flushright}
$\blacksquare$
\end{flushright}
\end{proof}

The estimates in Theorem \ref{theo:theoretical_bound1} can be written in
terms of the Lebesgue constant of interpolation at the node set $\sigma $,
defined by 
\begin{equation*}
\Lambda _{d}\left( \sigma \right) =\max\limits_{\mathbf{x}\in\Omega
}\sum\limits_{i=1}^{m}\left\vert \ell _{i}\left( \mathbf{x}\right)
\right\vert .
\end{equation*}

\begin{proposition}
\label{theo:theoretical_bound_Lebesgue}Let $f\in C^{d,1}\left( \Omega \right)
$ and $B_{h}\left( \mathbf{\overline{\mathbf{x}}}\right) \subset \Omega$.
Then for any $\mathbf{x}\in K=B_{h}\left( \mathbf{\overline{\mathbf{x}}}%
\right) $ and for any $\nu \in 
\mathbb{N}
_{0}^{s}$ such that $\left\vert \nu \right\vert \leq d$, we have 
\begin{equation}
\left\vert \left( D^{\nu }f-D^{\nu }p\left[ y,\sigma \right] \right) \left( 
\mathbf{x}\right) \right\vert \leq \left\Vert f\right\Vert _{d,1}^K\left(
k_{d-\left\vert \nu \right\vert }+k_{d}\,M_{d,\nu }\,\Lambda _{d}\left(
\sigma \right) \right) \,h^{d-\left\vert \nu \right\vert +1},
\label{bound_dnu_lebesgue}
\end{equation}%
where $k_{j}=\frac{s^{j}}{\left( j-1\right) !}$ for $j>0$, $k_{0}=1$, and 
\begin{equation}
M_{d,\nu }=\prod_{j=0}^{\left\vert \nu \right\vert -1}\left( d-j\right) ^{2}.
\end{equation}%
In particular, for $\mathbf{x}=\overline{\mathbf{x}}$, the following
inequality holds 
\begin{equation}
\left\vert \left( D^{\nu }f-D^{\nu }p\left[ y,\sigma \right] \right) \left( 
\overline{\mathbf{x}}\right) \right\vert \leq \left\Vert f\right\Vert
_{d,1}^K\,k_{d}\,M_{d,\nu }\,\Lambda _{d}(\sigma )\,h^{d-\left\vert \nu
\right\vert +1}.  \label{bound_dnu_Lebesgue_xbar}
\end{equation}
\end{proposition}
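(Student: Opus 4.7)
The plan is to use Proposition \ref{theo:theoretical_bound1} as the starting point: the first term there already has the form $k_{d-|\nu|}\|\mathbf{x}-\overline{\mathbf{x}}\|_2^{d-|\nu|+1}\|f\|_{d,1}^K$, and since $\mathbf{x}\in K\subset B_h(\overline{\mathbf{x}})$ we immediately have $\|\mathbf{x}-\overline{\mathbf{x}}\|_2\le h$, so this piece is bounded by $k_{d-|\nu|}\,h^{d-|\nu|+1}\|f\|_{d,1}^K$, exactly matching the first summand in \eqref{bound_dnu_lebesgue}. So the whole task reduces to bounding the second, Lagrange-weighted term by $k_d\,M_{d,\nu}\,\Lambda_d(\sigma)\,h^{d-|\nu|+1}\|f\|_{d,1}^K$.

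Next I would introduce the auxiliary polynomial
\[
q(\mathbf{x})=\sum_{i=1}^{m}\ell_i(\mathbf{x})\,R_T[f,\overline{\mathbf{x}}](\mathbf{x}_i),
\]
which has total degree $\le d$ because each $\ell_i$ does and the $R_T[f,\overline{\mathbf{x}}](\mathbf{x}_i)$ are scalars. To bound $\|q\|_{\infty,B_h(\overline{\mathbf{x}})}$, observe that since $\mathbf{x}_i\in\sigma\subset B_h(\overline{\mathbf{x}})\cap\Omega=K$, the pointwise remainder estimate \eqref{Dnu_remainder} with $\nu=0$ gives $|R_T[f,\overline{\mathbf{x}}](\mathbf{x}_i)|\le k_d\,h^{d+1}\|f\|_{d,1}^K$. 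Combining with the definition of the Lebesgue constant (and using $B_h(\overline{\mathbf{x}})\subset\Omega$) yields
\[
\|q\|_{\infty,B_h(\overline{\mathbf{x}})}\;\le\;k_d\,h^{d+1}\,\|f\|_{d,1}^K\,\Lambda_d(\sigma).
\]

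The main step is then a Markov-type inequality to trade sup-norm for derivatives. On the unit Euclidean ball of $\mathbb{R}^s$, any polynomial $p$ of degree $\le d$ satisfies $\|\partial_j p\|_\infty\le d^2\|p\|_\infty$, and iterating this $|\nu|$ times on polynomials whose degrees drop accordingly produces the factor $M_{d,\nu}=\prod_{j=0}^{|\nu|-1}(d-j)^2$. Rescaling to $B_h(\overline{\mathbf{x}})$ via $\mathbf{y}=(\mathbf{x}-\overline{\mathbf{x}})/h$, each derivative picks up a $1/h$, so
\[
\|D^\nu q\|_{\infty,B_h(\overline{\mathbf{x}})}\;\le\;M_{d,\nu}\,h^{-|\nu|}\,\|q\|_{\infty,B_h(\overline{\mathbf{x}})}
\;\le\;k_d\,M_{d,\nu}\,\Lambda_d(\sigma)\,h^{d-|\nu|+1}\,\|f\|_{d,1}^K.
\]
Adding this to the bound on the first term produces \eqref{bound_dnu_lebesgue}, and for $\mathbf{x}=\overline{\mathbf{x}}$ the $k_{d-|\nu|}\|\mathbf{x}-\overline{\mathbf{x}}\|_2^{d-|\nu|+1}$ contribution vanishes (since $|\nu|\le d$ makes the exponent strictly positive), giving the sharper \eqref{bound_dnu_Lebesgue_xbar}.

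The only real obstacle is the Markov-type step: one must invoke (or prove by standard reduction to the univariate Markov inequality along every direction) the constant $d^2$ per derivative on a Euclidean ball, then justify the product form $M_{d,\nu}$ by iteration on polynomials of decreasing degree. Everything else is bookkeeping with the exponent of $h$ and the triangle inequality already assembled in Proposition \ref{theo:theoretical_bound1}.
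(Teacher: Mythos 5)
Your proposal is correct and follows essentially the same route as the paper's proof: both reduce to bounding $D^{\nu}$ of the polynomial $\sum_{i}\ell_{i}(\mathbf{x})R_{T}[f,\overline{\mathbf{x}}](\mathbf{x}_{i})$ by iterating the Markov inequality on the ball $B_{h}(\overline{\mathbf{x}})$ (with constants $n^{2}/h$ for $n=d,d-1,\dots,d-|\nu|+1$, yielding $M_{d,\nu}h^{-|\nu|}$), then controlling the sup-norm via the Lebesgue constant and the remainder estimate \eqref{bound_remainder_xi}. The only cosmetic difference is the order of operations (you bound the sup-norm before applying Markov, the paper does the reverse), and the paper cites Wilhelmsen for the Markov step you flag as the one piece needing an external reference.
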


\begin{proof}
Since $\sum\limits_{i=1}^{m}R_{T}\left[ f,\overline{\mathbf{x}}%
\right] \left( \mathbf{x}_{i}\right) \ell _{i}\left( \mathbf{x}\right) $ is
a polynomial of total degree less than or equal to $d$, by repeatedly
applying the Markov inequality \cite{wilhelmsen1974markov} for a ball with
radius $h$ in the form 
\begin{equation*}
\left\Vert \frac{\partial q}{\partial \xi _{i}}\right\Vert _{\infty }\leq 
\frac{n^{2}}{h}\,\Vert q\Vert _{\infty },\;\;\forall q\in \Pi _{n}(\mathbb{R}%
^{s}),\;n=d, d-1,\dots ,d-\left\vert \nu \right\vert +1,
\end{equation*}%
and recalling that each partial derivative lowers the degree by one, we
easily obtain 
\begin{eqnarray*}
\left\vert D^{\nu }\left( \sum\limits_{i=1}^{m}R_{T}\left[ f,\overline{%
\mathbf{x}}\right] \left( \mathbf{x}_{i}\right) \ell _{i}\left( \mathbf{x}%
\right) \right) \right\vert &\leq &\frac{M_{d,\nu }}{h^{\left\vert \nu
\right\vert }}\left\Vert \sum\limits_{i=1}^{m}R_{T}\left[ f,\overline{%
\mathbf{x}}\right] \left( \mathbf{x}_{i}\right) \ell _{i}\right\Vert
_{\infty } \\
&\leq &\frac{M_{d,\nu }}{h^{\left\vert \nu \right\vert }}\max_{\mathbf{x}\in 
B_{h}\left( \mathbf{\overline{\mathbf{x}}}\right)
}\sum\limits_{i=1}^{m}\left\vert R_{T}\left[ f,\overline{\mathbf{x}}\right]
\left( \mathbf{x}_{i}\right) \right\vert \left\vert \ell _{i}\left( \mathbf{x%
}\right) \right\vert \\
&\leq &\frac{M_{d,\nu }}{h^{\left\vert \nu \right\vert }}\max_{i=1,\dots
,m}\left\vert R_{T}\left[ f,\overline{\mathbf{x}}\right] \left( \mathbf{x}%
_{i}\right) \right\vert \max_{\mathbf{x}\in B_{h}\left( \mathbf{\overline{%
\mathbf{x}}}\right) }\sum\limits_{i=1}^{m}\left\vert \ell _{i}\left( \mathbf{%
x}\right) \right\vert \\
&\leq &\frac{M_{d,\nu }}{h^{\left\vert \nu \right\vert }}\max_{i=1,\dots
,m}\left\vert R_{T}\left[ f,\overline{\mathbf{x}}\right] \left( \mathbf{x}%
_{i}\right) \right\vert \Lambda _{d}\left( \sigma \right) ,
\end{eqnarray*}%
where \cite[Lemma 2.1]{farwig1986rate}%
\begin{equation}
\begin{array}{cc}
\left\vert R_{T}\left[ f,\overline{\mathbf{x}}\right] \left( \mathbf{x}%
_{i}\right) \right\vert \leq k_{d}\left\Vert \mathbf{x}_{i}-\overline{%
\mathbf{x}}\right\Vert _{2}^{d+1}\left\Vert f\right\Vert _{d,1}^K, & \text{for
all }i=1,\dots ,m.
\end{array}
\label{bound_remainder_xi}
\end{equation}%
Consequently%
\begin{equation}
\left\vert D^{\nu }\left( \sum\limits_{i=1}^{m}\ell _{i}\left( \mathbf{x}%
\right) R_{T}\left[ f,\overline{\mathbf{x}}\right] \left( \mathbf{x}%
_{i}\right) \right) \right\vert \leq k_{d}\,M_{d,\nu }%
\,\left\Vert f\right\Vert _{d,1}^Kh^{d-\left\vert \nu \right\vert +1}\Lambda
_{d}\left( \sigma \right) .  \label{Dnu_sum_li_remainder}
\end{equation}%
Based on the inequality (\ref{Dnu_error_bound1}) in Theorem \ref%
{theo:theoretical_bound1}, we have%
\begin{equation}
\left\vert \left( D^{\nu }f-D^{\nu }p\left[ y,\sigma \right] \right) \left( 
\mathbf{x}\right) \right\vert \leq k_{d-\left\vert \nu \right\vert
}\left\Vert \mathbf{x}-\overline{\mathbf{x}}\right\Vert _{2}^{d-\left\vert
\nu \right\vert +1}\left\Vert f\right\Vert _{d,1}^K+k_{d}\,
M_{d,\nu }\left\Vert f\right\Vert _{d,1}^K h^{d-\left\vert \nu \right\vert
+1}\Lambda _{d}\left( \sigma \right) ,  \label{Dnu_error_bound2}
\end{equation}%
and since $\mathbf{x}\in B_{h}\left( \mathbf{\overline{\mathbf{x}}}\right) $%
, it follows that 
\begin{equation}
\left\vert \left( D^{\nu }f-D^{\nu }p\left[ y,\sigma \right] \right) \left( 
\mathbf{x}\right) \right\vert \leq \left\Vert f\right\Vert
_{d,1}^K h^{d-\left\vert \nu \right\vert +1}\left( k_{d-\left\vert \nu
\right\vert }+k_{d}\,M_{d,\nu }\,\Lambda _{d}\left( \sigma
\right) \right) ,  \notag
\end{equation}%
while (\ref{bound_dnu_Lebesgue_xbar}) follows easily by evaluating (\ref%
{Dnu_error_bound2}) at $\overline{\mathbf{x}}$. \hfill $\blacksquare$

\end{proof}

\begin{proposition}
\label{theo:theoretical_bound_3}Let $\overline{\mathbf{x}}\in \Omega$ and $f\in C^{d,1}\left( \Omega \right) $. Then for any $\mathbf{%
x}\in  K=B_h(\overline{\mathbf{x}})\cap\Omega$ and for
any $\nu \in 
\mathbb{N}
_{0}^{s}$ such that $\left\vert \nu \right\vert \leq d$, we have%
\begin{equation}
\left\vert \left( D^{\nu }f-D^{\nu }p\left[ y,\sigma \right] \right) \left( 
\mathbf{x}\right) \right\vert \leq \left\Vert f\right\Vert
_{d,1}^K h^{d+1}\left( k_{d-\left\vert \nu \right\vert }h^{-\left\vert \nu
\right\vert }+k_{d}\sum\limits_{i=1}^{m}\left\vert \sum_{\substack{ %
\left\vert \alpha \right\vert \leq d  \\ \alpha \geq \nu }}a_{\alpha
,h}^{i}h^{-\left\vert \alpha \right\vert }\frac{\alpha !}{\left( \alpha -\nu
\right) !}\left( \mathbf{x}-\overline{\mathbf{x}}\right) ^{\alpha -\nu
}\right\vert \right) ,  \label{bound_der_nu_x3}
\end{equation}%
where $k_{j}=\frac{s^{j}}{\left( j-1\right) !}$ for $j>0$, $k_{0}=1$. The
inequalities between multi-indices are interpreted componentwise, that is $%
\alpha \leq \beta $ if and only if $\alpha _{i}\leq \beta _{i}$, $i=1,\dots
,s$. In particular, for $\mathbf{x}=\overline{\mathbf{x}}$, the following
inequality holds 
\begin{equation}
\left\vert \left( D^{\nu }f-D^{\nu }p\left[ y,\sigma \right] \right) \left( 
\overline{\mathbf{x}}\right) \right\vert \leq \nu !k_{d}\left\Vert
f\right\Vert _{d,1}^K h^{d-\left\vert \nu \right\vert
+1}\sum\limits_{i=1}^{m}\left\vert a_{\nu ,h}^{i}\right\vert .
\label{bound_der_nu_xbar3}
\end{equation}
\end{proposition}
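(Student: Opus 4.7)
The plan is to start from the bound already established in Proposition \ref{theo:theoretical_bound1}, namely
$$\left|\left(D^{\nu}f-D^{\nu}p[y,\sigma]\right)(\mathbf{x})\right|\le k_{d-|\nu|}\|\mathbf{x}-\overline{\mathbf{x}}\|_{2}^{d-|\nu|+1}\|f\|_{d,1}^{K}+\left|D^{\nu}\!\left(\sum_{i=1}^{m}\ell_{i}(\mathbf{x})R_{T}[f,\overline{\mathbf{x}}](\mathbf{x}_{i})\right)\right|,$$
and to work out the second term explicitly using the scaled representation \eqref{lagrange_poly_scaled} of the fundamental Lagrange polynomials. Since the values $R_{T}[f,\overline{\mathbf{x}}](\mathbf{x}_{i})$ do not depend on $\mathbf{x}$, the operator $D^{\nu}$ passes through the summation and acts only on the $\ell_{i}$.

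Next I would differentiate $\ell_{i}$ term by term. For a shifted-scaled monomial $((\mathbf{x}-\overline{\mathbf{x}})/h)^{\alpha}=h^{-|\alpha|}(\mathbf{x}-\overline{\mathbf{x}})^{\alpha}$, the standard multi-index differentiation rule gives
$$D^{\nu}\left(\frac{\mathbf{x}-\overline{\mathbf{x}}}{h}\right)^{\alpha}=h^{-|\alpha|}\frac{\alpha!}{(\alpha-\nu)!}(\mathbf{x}-\overline{\mathbf{x}})^{\alpha-\nu}$$
when $\alpha\ge\nu$ componentwise, and $0$ otherwise. Summing over $\alpha$, this yields a clean formula for $D^{\nu}\ell_{i}(\mathbf{x})$ whose absolute value is exactly the inner sum appearing in \eqref{bound_der_nu_x3}. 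For the size of the remainder, I would invoke the Farwig estimate \eqref{bound_remainder_xi} combined with $\|\mathbf{x}_{i}-\overline{\mathbf{x}}\|_{2}\le h$ (which holds by the definition \eqref{h_scaling}), obtaining the uniform bound $|R_{T}[f,\overline{\mathbf{x}}](\mathbf{x}_{i})|\le k_{d}\,h^{d+1}\|f\|_{d,1}^{K}$. The triangle inequality then gives the estimate
$$\left|D^{\nu}\!\left(\sum_{i=1}^{m}\ell_{i}(\mathbf{x})R_{T}[f,\overline{\mathbf{x}}](\mathbf{x}_{i})\right)\right|\le k_{d}\,h^{d+1}\|f\|_{d,1}^{K}\sum_{i=1}^{m}\left|\sum_{\substack{|\alpha|\le d\\\alpha\ge\nu}}a_{\alpha,h}^{i}h^{-|\alpha|}\frac{\alpha!}{(\alpha-\nu)!}(\mathbf{x}-\overline{\mathbf{x}})^{\alpha-\nu}\right|.$$
Finally, estimating the first term of Proposition \ref{theo:theoretical_bound1} by $\|\mathbf{x}-\overline{\mathbf{x}}\|_{2}\le h$, I can factor $\|f\|_{d,1}^{K}h^{d+1}$ out of the whole expression and arrive at \eqref{bound_der_nu_x3}.

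For the special case $\mathbf{x}=\overline{\mathbf{x}}$, I would apply instead the sharper identity in Proposition \ref{theo:theoretical_bound1} (the first term vanishes identically), and observe that in the expression for $D^{\nu}\ell_{i}(\overline{\mathbf{x}})$ only the term with $\alpha=\nu$ survives, because $(\mathbf{x}-\overline{\mathbf{x}})^{\alpha-\nu}|_{\mathbf{x}=\overline{\mathbf{x}}}=0$ whenever $\alpha\neq\nu$; this collapses the double sum to $\nu!\,h^{-|\nu|}\sum_{i}|a_{\nu,h}^{i}|$, producing \eqref{bound_der_nu_xbar3}.

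The routine computations are straightforward; the only bookkeeping obstacle is keeping track of powers of $h$ consistently, since $a_{\alpha,h}^{i}=a_{\alpha}^{i}h^{|\alpha|}$ carries a positive exponent while the differentiated monomial contributes $h^{-|\alpha|}$, and these must combine with the $h^{d+1}$ coming from $R_{T}$ to yield precisely the $h^{d-|\nu|+1}$ scaling seen in the reduced bound at $\mathbf{x}=\overline{\mathbf{x}}$.
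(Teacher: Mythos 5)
Your proposal is correct and follows essentially the same route as the paper: both apply $D^{\nu}$ to the identity $f-p=R_{T}[f,\overline{\mathbf{x}}]-\sum_{i}\ell_{i}\,R_{T}[f,\overline{\mathbf{x}}](\mathbf{x}_{i})$, expand $D^{\nu}\ell_{i}$ via the scaled basis and the multi-index rule \eqref{derivative_x-xbar}, and combine the Farwig bounds \eqref{Dnu_remainder} and \eqref{bound_remainder_xi} with $\|\mathbf{x}-\overline{\mathbf{x}}\|_{2}\leq h$. Your handling of the case $\mathbf{x}=\overline{\mathbf{x}}$ (only the $\alpha=\nu$ term survives) also matches the paper's evaluation of \eqref{Dnu_x_error} at $\overline{\mathbf{x}}$.
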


\begin{proof}
By using the expression\ of the fundamental Lagrange polynomial $\ell
_{i}\left( \mathbf{x}\right) $ in the scaled basis (\ref%
{lagrange_poly_scaled}) and by applying the differentiation operator $D^{\nu
}$ to the expression (\ref{difference_errors}), we obtain%
\begin{equation}
\left( D^{\nu }f-D^{\nu }p\left[ y,\sigma \right] \right) \left( \mathbf{x}%
\right) =D^{\nu }R_{T}\left[ f,\overline{\mathbf{x}}\right] \left( \mathbf{x}%
\right) -\sum\limits_{i=1}^{m}\left( \sum_{\left\vert \alpha \right\vert
\leq d}a_{\alpha ,h}^{i}h^{-\left\vert \alpha \right\vert }D^{\nu }\left( 
\mathbf{x}-\overline{\mathbf{x}}\right) ^{\alpha }\right) R_{T}\left[ f,%
\overline{\mathbf{x}}\right] \left( \mathbf{x}_{i}\right) ,
\label{derivative_difference}
\end{equation}%
where%
\begin{equation}
D^{\nu }\left( \mathbf{x}-\overline{\mathbf{x}}\right) ^{\alpha }=\left\{ 
\begin{array}{cc}
\frac{\alpha !}{\left( \alpha -\nu \right) !}\left( \mathbf{x}-\overline{%
\mathbf{x}}\right) ^{\alpha -\nu } & \text{if }\nu \leq \alpha \text{,} \\ 
0 & \text{otherwise.}%
\end{array}%
\right.  \label{derivative_x-xbar}
\end{equation}%
By taking the modulus of both sides of (\ref{derivative_difference}) and by
using the triangular inequality, we get%
\begin{equation*}
\left\vert \left( D^{\nu }f-D^{\nu }p\left[ y,\sigma \right] \right) \left( 
\mathbf{x}\right) \right\vert \leq \left\vert D^{\nu }R_{T}\left[ f,%
\overline{\mathbf{x}}\right] \left( \mathbf{x}\right) \right\vert
+\sum\limits_{i=1}^{m}\left\vert \sum_{\left\vert \alpha \right\vert \leq
d}a_{\alpha ,h}^{i}h^{-\left\vert \alpha \right\vert }D^{\nu }\left( \mathbf{%
x}-\overline{\mathbf{x}}\right) ^{\alpha }\right\vert \left\vert R_{T}\left[
f,\overline{\mathbf{x}}\right] \left( \mathbf{x}_{i}\right) \right\vert ,
\end{equation*}%
Therefore, (\ref{Dnu_remainder}), (\ref{derivative_x-xbar}) and (\ref%
{bound_remainder_xi}) imply%
\begin{eqnarray}
\left\vert \left( D^{\nu }f-D^{\nu }p\left[ y,\sigma \right] \right) \left( 
\mathbf{x}\right) \right\vert &\leq &k_{d-\left\vert \nu \right\vert
}\left\Vert f\right\Vert _{d,1}^K\left\Vert \mathbf{x}-\overline{\mathbf{x}}%
\right\Vert _{2}^{d-\left\vert \nu \right\vert +1}  \label{Dnu_x_error} \\
&&+k_{d}\left\Vert f\right\Vert _{d,1}^K\sum\limits_{i=1}^{m}\left\vert \sum 
_{\substack{ \left\vert \alpha \right\vert \leq d  \\ \alpha \geq \nu }}%
a_{\alpha ,h}^{i}h^{-\left\vert \alpha \right\vert }\frac{\alpha !}{\left(
\alpha -\nu \right) !}\left( \mathbf{x}-\overline{\mathbf{x}}\right)
^{\alpha -\nu }\right\vert \left\Vert \mathbf{x}_{i}-\overline{\mathbf{x}}%
\right\Vert _{2}^{d+1}.  \notag
\end{eqnarray}%
Since $\mathbf{x}\in B_{h}\left( \mathbf{\overline{\mathbf{x}}}\right) $, we
get%
\begin{equation*}
\left\vert \left( D^{\nu }f-D^{\nu }p\left[ y,\sigma \right] \right) \left( 
\mathbf{x}\right) \right\vert \leq \left\Vert f\right\Vert
_{d,1}^K h^{d+1}\left( k_{d-\left\vert \nu \right\vert }h^{-\left\vert \nu
\right\vert }+k_{d}\sum\limits_{i=1}^{m}\left\vert \sum_{\substack{ %
\left\vert \alpha \right\vert \leq d  \\ \alpha \geq \nu }}a_{\alpha
,h}^{i}h^{-\left\vert \alpha \right\vert }\frac{\alpha !}{\left( \alpha -\nu
\right) !}\left( \mathbf{x}-\overline{\mathbf{x}}\right) ^{\alpha -\nu
}\right\vert \right) .
\end{equation*}%
Evaluating (\ref{Dnu_x_error}) at $\overline{\mathbf{x}}$ yields to (\ref%
{bound_der_nu_xbar3}).  \hfill $\blacksquare$

\end{proof}

In line with \cite{dell2020numerical} we can write the bounds in Proposition %
\ref{theo:theoretical_bound_3} in terms of the $1$-norm condition number, say $\cond_h\left(\sigma\right)$, of
the Vandermonde matrix $V_{\overline{\mathbf{x}},h}\left( \sigma \right) $.

\begin{corollary}
\label{cor:theoretical_bound_kappa}Let  $\overline{\mathbf{x}}\in \Omega$ and $f\in C^{d,1}\left( \Omega \right) $.
Then for any $\mathbf{x}\in K=B_{h}\left( \mathbf{\overline{\mathbf{x}}}%
\right)\cap \Omega$ and for any $\nu \in 
\mathbb{N}
_{0}^{s}$ such that $\left\vert \nu \right\vert \leq d$, we have%
\begin{equation}
\left\vert \left( D^{\nu }f-D^{\nu }p\left[ y,\sigma \right] \right) \left( 
\mathbf{x}\right) \right\vert \leq \left\Vert f\right\Vert
_{d,1}^K h^{d-\left\vert \nu \right\vert +1}\left( k_{d-\left\vert \nu
\right\vert }+k_{d}\max_{\substack{ \left\vert \alpha \right\vert \leq d  \\ %
\alpha \geq \nu }}\left( \frac{\alpha !}{\left( \alpha -\nu \right) !}%
\right) \cond_{h}\left( \sigma \right) \right) ,  \label{bound_der_nu_kappa}
\end{equation}%
where $k_{j}=\frac{s^{j}}{\left( j-1\right) !}$ for $j>0$, $k_{0}=1$. The
inequalities between multi-indices are interpreted componentwise, that is $%
\alpha \leq \beta $ if and only if $\alpha _{i}\leq \beta _{i}$, $i=1,\dots
,s$. In particular, for $\mathbf{x}=\overline{\mathbf{x}}$, the following
inequality holds 
\begin{equation}
\left\vert \left( D^{\nu }f-D^{\nu }p\left[ y,\sigma \right] \right) \left( 
\overline{\mathbf{x}}\right) \right\vert \leq \nu !k_{d}\left\Vert
f\right\Vert _{d,1}^K h^{d-\left\vert \nu \right\vert +1}\cond_{h}\left( \sigma
\right) .  \label{bound_der_nu_xbar_kappa}
\end{equation}
\end{corollary}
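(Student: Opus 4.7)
The plan is to derive Corollary \ref{cor:theoretical_bound_kappa} directly from Proposition \ref{theo:theoretical_bound_3} by bounding the inner absolute sum over the inverse Vandermonde coefficients $a_{\alpha,h}^i$ in terms of the $1$-norm condition number of $V_{\overline{\mathbf{x}},h}(\sigma)$. The essential observation is that the matrix with columns $[a_{\alpha,h}^i]_\alpha$ is precisely $V_{\overline{\mathbf{x}},h}(\sigma)^{-1}$, while the constant column of $V_{\overline{\mathbf{x}},h}(\sigma)$ itself forces $\|V_{\overline{\mathbf{x}},h}(\sigma)\|_1\geq m$, which is what allows the condition number to absorb the factor $m$ that naturally arises.

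First I would start from the bound \eqref{bound_der_nu_x3} and control each monomial factor componentwise. Since $\mathbf{x}\in B_h(\overline{\mathbf{x}})$, each entry of $\mathbf{x}-\overline{\mathbf{x}}$ has modulus at most $\|\mathbf{x}-\overline{\mathbf{x}}\|_2\leq h$, so $|(\mathbf{x}-\overline{\mathbf{x}})^{\alpha-\nu}|\leq h^{|\alpha|-|\nu|}$ and therefore $h^{-|\alpha|}|(\mathbf{x}-\overline{\mathbf{x}})^{\alpha-\nu}|\leq h^{-|\nu|}$. Combined with the trivial estimate $\alpha!/(\alpha-\nu)!\leq \max_{\alpha\geq\nu,\,|\alpha|\leq d}\alpha!/(\alpha-\nu)!$, the inner absolute sum over $\alpha$ is majorised by $h^{-|\nu|}\cdot\max_{\alpha}\bigl(\alpha!/(\alpha-\nu)!\bigr)\cdot\sum_{|\alpha|\leq d}|a_{\alpha,h}^i|$. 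Summing over $i$ reduces the task to bounding the double sum $\sum_{i=1}^m\sum_{|\alpha|\leq d}|a_{\alpha,h}^i|$ by $\cond_h(\sigma)$.

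For this I would use the defining relation: rewriting \eqref{lagrange_poly_scaled} in matrix form yields $V_{\overline{\mathbf{x}},h}(\sigma)\,[a_{\alpha,h}^i]_{\alpha}=\delta^i$, so $[a_{\alpha,h}^i]_\alpha$ is exactly the $i$-th column of $V_{\overline{\mathbf{x}},h}(\sigma)^{-1}$. Hence $\sum_{|\alpha|\leq d}|a_{\alpha,h}^i|=\|V_{\overline{\mathbf{x}},h}(\sigma)^{-1}\delta^i\|_1\leq\|V_{\overline{\mathbf{x}},h}(\sigma)^{-1}\|_1$, and summing over $i=1,\dots,m$ gives $\sum_{i,\alpha}|a_{\alpha,h}^i|\leq m\,\|V_{\overline{\mathbf{x}},h}(\sigma)^{-1}\|_1$. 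The key trick is then to notice that the column of $V_{\overline{\mathbf{x}},h}(\sigma)$ indexed by the zero multi-index is $(1,\dots,1)^T$, so $\|V_{\overline{\mathbf{x}},h}(\sigma)\|_1\geq m$, which yields $m\,\|V_{\overline{\mathbf{x}},h}(\sigma)^{-1}\|_1\leq\|V_{\overline{\mathbf{x}},h}(\sigma)\|_1\,\|V_{\overline{\mathbf{x}},h}(\sigma)^{-1}\|_1=\cond_h(\sigma)$. Substituting this back into the estimate from the previous paragraph and absorbing the remaining powers of $h$ into the prefactor $h^{d-|\nu|+1}$ produces \eqref{bound_der_nu_kappa}.

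For the special case $\mathbf{x}=\overline{\mathbf{x}}$, the factor $(\mathbf{x}-\overline{\mathbf{x}})^{\alpha-\nu}$ vanishes whenever $\alpha\neq\nu$, so \eqref{bound_der_nu_xbar3} already isolates the row sum $\sum_{i=1}^m|a_{\nu,h}^i|$, and bounding it by the same chain $\sum_i|a_{\nu,h}^i|\leq\sum_{i,\alpha}|a_{\alpha,h}^i|\leq\cond_h(\sigma)$ yields \eqref{bound_der_nu_xbar_kappa} directly. The only non-routine step, and hence the main obstacle worth flagging, is the passage $m\,\|V_{\overline{\mathbf{x}},h}(\sigma)^{-1}\|_1\leq\cond_h(\sigma)$: without the lower bound $\|V_{\overline{\mathbf{x}},h}(\sigma)\|_1\geq m$ coming from the constant column, one would be left with an explicit factor of $m$, and the clean formulation in terms of $\cond_h(\sigma)$ along the lines of \cite{dell2020numerical} would fail.
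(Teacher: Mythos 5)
Your proof is correct and follows essentially the same route as the paper: triangle inequality on the inner sum, factoring out the maximum of $h^{-\left\vert \alpha\right\vert}\frac{\alpha!}{(\alpha-\nu)!}\left\vert(\mathbf{x}-\overline{\mathbf{x}})^{\alpha-\nu}\right\vert$, identifying the vectors $a_h^i$ as the columns of $V_{\overline{\mathbf{x}},h}^{-1}(\sigma)$ so that $\sum_{i}\sum_{\alpha}\vert a_{\alpha,h}^i\vert\leq m\Vert V_{\overline{\mathbf{x}},h}^{-1}(\sigma)\Vert_1$, and using the all-ones column to trade the factor $m$ for $\Vert V_{\overline{\mathbf{x}},h}(\sigma)\Vert_1$ and obtain $\cond_h(\sigma)$. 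The only cosmetic difference is that the paper states the equality $\Vert V_{\overline{\mathbf{x}},h}(\sigma)\Vert_1=m$ (every entry of the scaled Vandermonde matrix has modulus at most one), whereas you only invoke the inequality $\geq m$, which is all that is needed.
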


\begin{proof}
By applying the triangular inequality to (\ref{Dnu_x_error}), we get%
\begin{eqnarray*}
\left\vert \left( D^{\nu }f-D^{\nu }p\left[ y,\sigma \right] \right) \left( 
\mathbf{x}\right) \right\vert &\leq &k_{d-\left\vert \nu \right\vert
}\left\Vert f\right\Vert _{d,1}^K\left\Vert \mathbf{x}-\overline{\mathbf{x}}%
\right\Vert _{2}^{d-\left\vert \nu \right\vert +1} \\
&&+k_{d}\left\Vert f\right\Vert _{d,1}^K h^{d+1}\sum\limits_{i=1}^{m}\sum 
_{\substack{ \left\vert \alpha \right\vert \leq d  \\ \alpha \geq \nu }}%
\left\vert a_{\alpha ,h}^{i}\right\vert h^{-\left\vert \alpha \right\vert }%
\frac{\alpha !}{\left( \alpha -\nu \right) !}\left\vert \left( \mathbf{x}-%
\overline{\mathbf{x}}\right) ^{\alpha -\nu }\right\vert \\
&\leq &k_{d-\left\vert \nu \right\vert }\left\Vert f\right\Vert
_{d,1}^K \left\Vert \mathbf{x}-\overline{\mathbf{x}}\right\Vert
_{2}^{d-\left\vert \nu \right\vert +1} \\
&&+k_{d}\left\Vert f\right\Vert _{d,1}^K \max_{\substack{ \left\vert \alpha
\right\vert \leq d  \\ \alpha \geq \nu }}\left( h^{-\left\vert \alpha
\right\vert }\frac{\alpha !}{\left( \alpha -\nu \right) !}\left\vert \left( 
\mathbf{x}-\overline{\mathbf{x}}\right) ^{\alpha -\nu }\right\vert \right)
h^{d+1}\sum\limits_{i=1}^{m}\sum_{\substack{ \left\vert \alpha \right\vert
\leq d  \\ \alpha \geq \nu }}\left\vert a_{\alpha ,h}^{i}\right\vert \\
&\leq &k_{d-\left\vert \nu \right\vert }\left\Vert f\right\Vert
_{d,1}^K \left\Vert \mathbf{x}-\overline{\mathbf{x}}\right\Vert
_{2}^{d-\left\vert \nu \right\vert +1} \\
&&+k_{d}\left\Vert f\right\Vert _{d,1}^K \max_{\substack{ \left\vert \alpha
\right\vert \leq d  \\ \alpha \geq \nu }}\left( h^{-\left\vert \alpha
\right\vert }\frac{\alpha !}{\left( \alpha -\nu \right) !}\left\vert \left( 
\mathbf{x}-\overline{\mathbf{x}}\right) ^{\alpha -\nu }\right\vert \right)
h^{d+1}\sum\limits_{i=1}^{m}\left\Vert a_{h}^{i}\right\Vert _{1},
\end{eqnarray*}%
where $a_{h}^{i}=\left[ a_{\alpha ,h}^{i}\right] _{\left\vert \alpha
\right\vert \leq d}^{T}$. Based on the expression of $\ell _{i}\left( 
\mathbf{x}\right) $ (\ref{lagrange_poly_scaled}), we have \cite%
{dell2020numerical} 
\begin{equation*}
V_{\overline{\mathbf{x}},h}^{-1}(\sigma )=\left[ 
\begin{tabular}{l|l|l|l|l}
$a_{h}^{1}$ & $a_{h}^{2}$ & $\cdots $ & $a_{h}^{m-1}$ & $a_{h}^{m}$%
\end{tabular}%
\right] ,
\end{equation*}%
and then 
\begin{equation*}
\max\limits_{i=1,\dots m}\left\Vert a_{h}^{i}\right\Vert _{1}=\left\Vert V_{%
\overline{\mathbf{x}},h}^{-1}\left( \sigma \right) \right\Vert _{1}.
\end{equation*}%
Moreover, since $\left\Vert V_{\overline{\mathbf{x}},h}\left( \sigma \right)
\right\Vert _{1}=m$ then $\sum\limits_{i=1}^{m}\left\Vert
a_{h}^{i}\right\Vert _{1}\leq \cond_{h}\left( \sigma \right) $ and therefore%
\begin{eqnarray}
\left\vert \left( D^{\nu }f-D^{\nu }p\left[ y,\sigma \right] \right) \left( 
\mathbf{x}\right) \right\vert &\leq &k_{d-\left\vert \nu \right\vert
}\left\Vert f\right\Vert _{d,1}^K \left\Vert \mathbf{x}-\overline{\mathbf{x}}%
\right\Vert _{2}^{d-\left\vert \nu \right\vert +1}  \label{bound} \\
&&+k_{d}\left\Vert f\right\Vert _{d,1}^K \max_{\substack{ \left\vert \alpha
\right\vert \leq d  \\ \alpha \geq \nu }}\left( h^{-\left\vert \alpha
\right\vert }\frac{\alpha !}{\left( \alpha -\nu \right) !}\left\vert \left( 
\mathbf{x}-\overline{\mathbf{x}}\right) ^{\alpha -\nu }\right\vert \right)
h^{d+1}\cond_{h}\left( \sigma \right)  \notag \\
&\leq &k_{d-\left\vert \nu \right\vert }\left\Vert f\right\Vert
_{d,1}^K \left\Vert \mathbf{x}-\overline{\mathbf{x}}\right\Vert
_{2}^{d-\left\vert \nu \right\vert +1}  \notag \\
&&+k_{d}\left\Vert f\right\Vert _{d,1}^K \max_{\substack{ \left\vert \alpha
\right\vert \leq d  \\ \alpha \geq \nu }}\left( h^{-\left\vert \alpha
\right\vert }\frac{\alpha !}{\left( \alpha -\nu \right) !}\left\Vert \mathbf{%
x}-\overline{\mathbf{x}}\right\Vert _{2}^{\left\vert \alpha \right\vert
-\left\vert \nu \right\vert }\right) h^{d+1}\cond_{h}\left( \sigma \right) .
\notag
\end{eqnarray}%
Being $\mathbf{x}\in B_{h}\left( \mathbf{\overline{\mathbf{x}}}\right) $, it
follows that 
\begin{equation*}
\left\vert \left( D^{\nu }f-D^{\nu }p\left[ y,\sigma \right] \right) \left( 
\mathbf{x}\right) \right\vert \leq \left\Vert f\right\Vert
_{d,1}^K h^{d-\left\vert \nu \right\vert +1}\left( k_{d-\left\vert \nu
\right\vert }+k_{d}\max_{\substack{ \left\vert \alpha \right\vert \leq d  \\ %
\alpha \geq \nu }}\left( \frac{\alpha !}{\left( \alpha -\nu \right) !}%
\right) \cond_{h}\left( \sigma \right) \right) .
\end{equation*}%
By evaluating (\ref{bound}) at\textbf{\ }$\overline{\mathbf{x}}$, we obtain (%
\ref{bound_der_nu_xbar_kappa}). \hfill $\blacksquare$
\end{proof}

The results of Table \ref{tab:estimates} in Section \ref{section4} show that
the bounds  \eqref{bound_dnu_Lebesgue_xbar} and %
\eqref{bound_der_nu_xbar_kappa} are much larger than %
\eqref{bound_der_nu_xbar3}, which is only based on the ``active
coefficients'' in the differentiation process. Therefore, in the analysis of
the sensitivity to the perturbation of the function values, we use only the
``active coefficients''.

\begin{proposition}
Let $\widetilde{y}=y+\Delta y$, where $\Delta y=\left[ \Delta y_{i}\right]
_{i=1,\dots ,m}$ corresponds to the perturbation on the function values $y=%
\left[ y_{i}\right] _{i=1,\dots ,m}$. Then for any $\mathbf{x}\in 
K=B_{h}\left( \mathbf{\overline{\mathbf{x}}}\right)\cap \Omega$, for any $\nu
\in 
\mathbb{N}
_{0}^{s}$ such that $\left\vert \nu \right\vert \leq d$ and for any $\Delta
y $ with $\left\vert \Delta y\right\vert \leq \varepsilon $, we have%
\begin{equation*}
\left\vert \left( D^{\nu }p\left[ y,\sigma \right] -D^{\nu }p\left[ 
\widetilde{y},\sigma \right] \right) \left( \mathbf{x}\right) \right\vert
\leq \varepsilon \sum\limits_{i=1}^{m}\left\vert \sum_{\substack{ \left\vert
\alpha \right\vert \leq d  \\ \alpha \geq \nu }}a_{\alpha
,h}^{i}h^{-\left\vert \alpha \right\vert }\frac{\alpha !}{\left( \alpha -\nu
\right) !}\left( \mathbf{x}-\overline{\mathbf{x}}\right) ^{\alpha -\nu
}\right\vert ,
\end{equation*}%
where $k_{j}=\frac{s^{j}}{\left( j-1\right) !}$ for $j>0$, $k_{0}=1$. The
inequalities between multi-indices are interpreted componentwise, that is $%
\alpha \leq \beta $ if and only if $\alpha _{i}\leq \beta _{i}$, $i=1,\dots
,s$. In particular, for $\mathbf{x}=\overline{\mathbf{x}}$, the following
inequality holds%
\begin{equation}  \label{stabest}
\left\vert \left( D^{\nu }p\left[ y,\sigma \right] -D^{\nu }p\left[ 
\widetilde{y},\sigma \right] \right) \left( \overline{\mathbf{x}}\right)
\right\vert \leq \varepsilon\,\sum_{i=1}^m{|D^{\nu }\ell_i(%
\overline{\mathbf{x}})|}=\,\varepsilon\, \nu !h^{-\left\vert \nu
\right\vert }\sum\limits_{i=1}^{m}\left\vert a_{\nu ,h}^{i}\right\vert .
\end{equation}
\end{proposition}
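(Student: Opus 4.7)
The plan is to exploit the linearity of the Lagrange interpolation operator in the data vector, together with the explicit expansion of the fundamental Lagrange polynomials $\ell_i$ in the scaled translated monomial basis already established in (\ref{lagrange_poly_scaled}). Concretely, by linearity,
\[
p[\widetilde{y},\sigma](\mathbf{x}) - p[y,\sigma](\mathbf{x}) = p[\Delta y,\sigma](\mathbf{x}) = \sum_{i=1}^{m}\Delta y_{i}\,\ell_{i}(\mathbf{x}),
\]
so the $\nu$-th derivative of the perturbation satisfies $D^{\nu}(p[\widetilde{y},\sigma]-p[y,\sigma])(\mathbf{x}) = \sum_{i=1}^{m}\Delta y_{i}\,D^{\nu}\ell_{i}(\mathbf{x})$.

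Next, I would substitute the scaled expansion (\ref{lagrange_poly_scaled}) for each $\ell_{i}$, differentiate termwise, and apply (\ref{derivative_x-xbar}) — this automatically annihilates every term with $\alpha\not\geq\nu$, leaving
\[
D^{\nu}\ell_{i}(\mathbf{x}) = \sum_{\substack{|\alpha|\leq d \\ \alpha\geq\nu}} a_{\alpha,h}^{i}\,h^{-|\alpha|}\,\frac{\alpha!}{(\alpha-\nu)!}\,(\mathbf{x}-\overline{\mathbf{x}})^{\alpha-\nu}.
\]
Taking moduli and using the triangle inequality together with the hypothesis $|\Delta y_{i}|\leq\varepsilon$ for every $i$ then yields the general bound in the statement.

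For the special case $\mathbf{x}=\overline{\mathbf{x}}$, the factor $(\mathbf{x}-\overline{\mathbf{x}})^{\alpha-\nu}$ vanishes whenever $\alpha-\nu\neq\mathbf{0}$, so only the single index $\alpha=\nu$ survives in the inner sum; since $\nu!/(\nu-\nu)! = \nu!$ and $(\overline{\mathbf{x}}-\overline{\mathbf{x}})^{\mathbf{0}}=1$, the right-hand side collapses to $\varepsilon\,\nu!\,h^{-|\nu|}\sum_{i=1}^{m}|a_{\nu,h}^{i}|$, which is exactly (\ref{stabest}); the intermediate identity $\sum_{i}|D^{\nu}\ell_{i}(\overline{\mathbf{x}})|=\nu! h^{-|\nu|}\sum_{i}|a_{\nu,h}^{i}|$ is then a direct reading of the previous line.

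There is essentially no analytic obstacle here — the argument is purely algebraic and mirrors the first part of the proof of Proposition~\ref{theo:theoretical_bound_3}, but without any Taylor remainder estimate since perturbation does not involve the smoothness of $f$. The only point to present carefully is the bookkeeping of multi-indices in the condition $\alpha\geq\nu$ and the collapse of the sum at $\mathbf{x}=\overline{\mathbf{x}}$.
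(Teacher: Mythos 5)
Your argument is correct and follows essentially the same route as the paper's proof: both use the Lagrange representation $p[\widetilde{y},\sigma]-p[y,\sigma]=\sum_i \Delta y_i\,\ell_i$, substitute the scaled expansion (\ref{lagrange_poly_scaled}) of $\ell_i$, differentiate via (\ref{derivative_x-xbar}), and apply the triangle inequality with $|\Delta y_i|\leq\varepsilon$, before collapsing the sum at $\mathbf{x}=\overline{\mathbf{x}}$. No gaps.
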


\begin{proof}
Denoting by $\widetilde{y}=y+\Delta y$, where $\Delta y=\left[ \Delta y_{i}%
\right] _{i=1,\dots ,m}$ corresponds to the perturbation on the function
values $y=\left[ y_{i}\right] _{i=1,\dots ,m}$, by (\ref{inter_poly_lagrange}%
) we get 
\begin{eqnarray*}
p\left[ \widetilde{y},\sigma \right] \left( \mathbf{x}\right)
&=&\sum\limits_{i=1}^{m}\ell _{i}\left( \mathbf{x}\right) \widetilde{y_{i}}
\\
&=&\sum\limits_{i=1}^{m}\ell _{i}\left( \mathbf{x}\right) \left(
y_{i}+\Delta y_{i}\right) \\
&=&p\left[ y,\sigma \right] \left( \mathbf{x}\right)
+\sum\limits_{i=1}^{m}\ell _{i}\left( \mathbf{x}\right) \Delta y_{i}.
\end{eqnarray*}%
Then for any $\Delta y$ such that $\left\vert \Delta y\right\vert \leq
\varepsilon $, 
\begin{eqnarray*}
\left\vert \left( D^{\nu }p\left[ y,\sigma \right] -D^{\nu }p\left[ 
\widetilde{y},\sigma \right] \right) \left( \mathbf{x}\right) \right\vert
&=&\left\vert D^{\nu }\left( \sum\limits_{i=1}^{m}\ell _{i}\left( \mathbf{x}%
\right) \Delta y_{i}\right) \right\vert \\
&=&\left\vert \sum\limits_{i=1}^{m}D^{\nu } \ell _{i}\left( \mathbf{x}%
\right) \Delta y_{i}\right\vert \\
&\leq &\varepsilon \sum\limits_{i=1}^{m}\left\vert D^{\nu }\ell _{i}\left( 
\mathbf{x}\right) \right\vert \\
&\leq &\varepsilon \sum\limits_{i=1}^{m}\left\vert D^{\nu }\left(
\sum_{\left\vert \alpha \right\vert \leq d}a_{\alpha ,h}^{i}\left( \frac{%
\mathbf{x}-\overline{\mathbf{x}}}{h}\right) ^{\alpha }\right) \right\vert .
\end{eqnarray*}%
Since, using (\ref{derivative_x-xbar}), we have 
\begin{equation*}
D^{\nu }\left( \sum_{\left\vert \alpha \right\vert \leq d}a_{\alpha
,h}^{i}\left( \frac{\mathbf{x}-\overline{\mathbf{x}}}{h}\right) ^{\alpha
}\right) =\sum_{\substack{ \left\vert \alpha \right\vert \leq d  \\ \alpha
\geq \nu }}a_{\alpha ,h}^{i}h^{-\left\vert \alpha \right\vert }\frac{\alpha !%
}{\left( \alpha -\nu \right) !}\left( \mathbf{x}-\overline{\mathbf{x}}%
\right) ^{\alpha -\nu },
\end{equation*}%
then 
\begin{equation}
\left\vert \left( D^{\nu }p\left[ y,\sigma \right] -D^{\nu }p\left[ 
\widetilde{y},\sigma \right] \right) \left( \mathbf{x}\right) \right\vert
\leq \varepsilon \sum\limits_{i=1}^{m}\left\vert \sum_{\substack{ \left\vert
\alpha \right\vert \leq d  \\ \alpha \geq \nu }}a_{\alpha
,h}^{i}h^{-\left\vert \alpha \right\vert }\frac{\alpha !}{\left( \alpha -\nu
\right) !}\left( \mathbf{x}-\overline{\mathbf{x}}\right) ^{\alpha -\nu
}\right\vert .  \label{bound_diff_perturb}
\end{equation}%
For $\mathbf{x}=\overline{\mathbf{x}}$, it follows that%
\begin{equation*}
\left\vert \left( D^{\nu }p\left[ y,\sigma \right] -D^{\nu }p\left[ 
\widetilde{y},\sigma \right] \right) \left( \overline{\mathbf{x}}\right)
\right\vert \leq \varepsilon \nu !h^{-\left\vert \nu \right\vert
}\sum\limits_{i=1}^{m}\left\vert a_{\nu ,h}^{i}\right\vert .
\end{equation*}
\begin{flushright}
$\blacksquare$
\end{flushright}
\end{proof}

\begin{remark}
It is worth observing that the quantity 
\begin{equation}  \label{stabconst}
\sum_{i=1}^m{|D^{\nu }\ell_i(\overline{\mathbf{x}})|}= \nu !h^{-\left\vert
\nu \right\vert }\sum\limits_{i=1}^{m}\left\vert a_{\nu ,h}^{i}\right\vert
\end{equation}
is the ``stability constant'' of pointwise differentiation via local
polynomial interpolation, namely the value at the center of the ``stability
function'' for the ball, that is 
\begin{equation} \label{stability_constant}
\sum_{i=1}^m{|D^{\nu }\ell_i(\mathbf{x})|} = \sum\limits_{i=1}^{m}\left\vert
\sum_{\substack{ \left\vert \alpha \right\vert \leq d  \\ \alpha \geq \nu }}%
a_{\alpha ,h}^{i}h^{-\left\vert \alpha \right\vert }\frac{\alpha !}{\left(
\alpha -\nu \right) !}\left( \mathbf{x}-\overline{\mathbf{x}}\right)
^{\alpha -\nu }\right\vert\;.
\end{equation}
Notice also that in view of (\ref{bound_der_nu_xbar3}) and (\ref{stabconst})
the overall numerical differentiation error, in the presence of
perturbations on the the function values of size not exceeding $\varepsilon$%
, can be estimated as 
\begin{equation}  \label{overallest }
\left\vert \left( D^{\nu }f-D^{\nu }p\left[ \widetilde{y},\sigma \right]%
\right) \left( \overline{\mathbf{x}}\right) \right\vert \leq
\left(k_d\|f\|_{d,1}^K h^{d+1}+\varepsilon\right)\,\nu !h^{-\left\vert \nu
\right\vert }\sum\limits_{i=1}^{m}\left\vert a_{\nu ,h}^{i}\right\vert \;.
\end{equation}
For the purpose of illustration, in Table \ref{tab:estimates_pertub} and Figure \ref{fig:stability_constant_plot} of
Section \ref{section4}  we show the magnitude of the stability constant %
\eqref{stabconst} relative to some numerical tests.
\end{remark}

\begin{remark}
The previous results are useful to estimate the error of approximation in
several processes of scattered data interpolation which use polynomials as
local interpolants, like for example triangular Shepard \cite%
{dell2016approximation,cavoretto2019fast}, hexagonal Shepard \cite%
{dell2020hexagonal} and tetrahedral Shepard methods \cite%
{cavoretto2020efficient}. They are also crucial to realize extensions of
those methods to higher dimensions \cite{dell2019rate}. 
\end{remark}

\section{\label{section4}Numerical experiments}

In this section we provide some numerical tests to support the above
theoretical results in approximating function, gradient and second order
derivative values. We fix $s=2$, $\Omega=[0,1]^2$ and we take different
positions for the point $\overline{\mathbf{x}}$ in $\Omega$: at the center,
and near/on a side and a corner of the square. We use different
distributions of scattered points in $\Omega$, namely Halton points \cite{kocis1997computational} and
uniform random points. We focus on the scattered points in the ball $B_r(%
\overline{\mathbf{x}})$ centered at $\overline{\mathbf{x}}$ for different
radii, from which we extract an interpolation subset $\sigma$ of $m=\binom{%
d+2}{2}$ Discrete Leja Points computed through the algorithm proposed in 
\cite{bos2010computing} (see Figure \ref{fig:Halton_balls}).

The reason for adopting Discrete Leja Points is twofold. We recall that they
are extracted from a finite set of points (in this case the scattered points
in the ball) by LU factorization with row pivoting of the corresponding
rectangular Vandermonde matrix. Indeed, Gaussian elimination with row
pivoting performs a sort of greedy optimization of the Vandermonde
determinant, by searching iteratively the new row (that is selecting the new
interpolation point) in such a way that the modulus of the augmented
determinant is maximized. In addition, if the polynomial basis is
lexicographically ordered, the Discrete Leja Points form a sequence, that is
the first ${\binom{k+s }{s}}$ are the Discrete Leja Points for interpolation
of degree $k$ in $s$ variables, $1\leq k\leq d$; see \cite{bos2010computing}
for a comprehensive discussion.

Then, on one hand Discrete Leja Points provide, with a low computational
cost, a unisolvent interpolation set, since a nonzero Vandermonde
determinant is automatically seeked. On the other hand, since they are
computed by a greedy maximization, one can expect, as a qualitative
guideline, that the elements of the corresponding inverse Vandermonde matrix
(that are cofactors divided by the Vandermonde determinant), and thus also
the relevant sum in the error bound \eqref{bound_der_nu_xbar3} as well as
the condition number, are not allowed to increase rapidly. These results are
in line with those shown in \cite[Table 1]{dell2020numerical}. In addition,
using Discrete Leja Points has also the effect of trying to minimize the
sup-norm of the fundamental Lagrange polynomials $\ell_i$ (which, as it is
well-known, can be written as ratio of determinants, cf. \cite%
{bos2010computing}) and thus the Lebesgue constant, which is relevant to
estimate \eqref{bound_dnu_Lebesgue_xbar}. Nevertheless, it is clear from
Table 1 that the bounds involving the Lebesgue constant and the condition
number are much larger than \eqref{bound_der_nu_xbar3} which rests only on
the ``active coefficients'' in the differentiation process. Further
numerical experiments show that, while decreasing $r$, for each value of $%
\left \vert\nu\right \vert$, the first and third rows in Table \ref{tab:estimates} remain of the same
order of magnitude thanks to the scaling of the basis, for the feasible degrees (since unisolvence of interpolation of degree $d$ is possible until there are enough scattered points in
the ball).

\begin{table}[tbp]
\begin{center}
{\scriptsize \ 
\begin{tabular}{c@{\hspace{.4cm}}c|@{\hspace{.4cm}}c|@{\hspace{.4cm}}c|@{\hspace{.4cm}}c|@{\hspace{.4cm}}c|@{\hspace{.4cm}}c|@{\hspace{.4cm}}c}
&  &  & $d=5$ & $d=10$ & $d=15$ & $d=20$ & $d=25$ \\ \hline
&  & $\nu!\sum\limits_{i=1}^{m}\left\vert a_{\nu ,h}^{i}\right\vert$ & 2.31
& 2.43 & 6.69 & 2.41e+1 & 3.51e+1 \\ 
&  &  &  &  &  &  &  \\ 
& $\left \vert \nu\right \vert =0$ & $M_{d,\nu}\Lambda _{d}\left(
\sigma\right)$ & 6.22 & 6.91 & 2.50e+1 & 6.07e+1 & 1.21e+3 \\ 
&  &  &  &  &  &  &  \\ 
&  & $\nu! \cond _{h}\left( \sigma \right) $ & 1.96e+3 & 1.25e+6 & 8.89e+8 & 
3.38e+11 & 2.05e+14 \\ \hline
&  & $\nu!\sum\limits_{i=1}^{m}\left\vert a_{\nu ,h}^{i}\right\vert$ & 
1.32e+1 & 3.63e+1 & 2.27e+2 & 4.53e+2 & 3.87e+2 \\ 
&  &  &  &  &  &  &  \\ 
& $\left \vert \nu\right \vert =1$ & $M_{d,\nu}\Lambda _{d}\left(
\sigma\right)$ & 1.55e+2 & 6.91e+2 & 5.63e+3 & 2.43e+4 & 7.57e+5 \\ 
&  &  &  &  &  &  &  \\ 
&  & $\nu! \cond _{h}\left( \sigma \right) $ & 1.96e+3 & 1.25e+6 & 8.89e+8 & 
3.38e+11 & 2.05e+14 \\ \hline
&  & $\nu!\sum\limits_{i=1}^{m}\left\vert a_{\nu ,h}^{i}\right\vert$ & 
2.48e+1 & 3.53e+2 & 8.25e+2 & 4.55e+3 & 7.62e+3 \\ 
&  &  &  &  &  &  &  \\ 
& $\left \vert \nu\right \vert =2$ & $M_{d,\nu}\Lambda _{d}\left(
\sigma\right)$ & 2.49e+3 & 5.60e+4 & 1.10e+6 & 8.76e+6 & 4.36e+8 \\ 
&  &  &  &  &  &  &  \\ 
&  & $\nu! \cond _{h}\left( \sigma \right) $ & 3.27e+3 & 2.08e+6 & 1.48e+9 & 
5.63e+11 & 3.42e+14 \\ 
&  &  &  &  &  &  & 
\end{tabular}%
}
\end{center}
\caption{Numerical comparison among the mean value of the uncommon terms of
the estimates (\protect\ref{bound_der_nu_xbar3}), (\protect\ref%
{bound_dnu_Lebesgue_xbar}) and (\protect\ref{bound_der_nu_xbar_kappa}) with $%
\left\vert \protect\nu \right\vert \leq 2$, for interpolation at a sequence
of degrees $d$ on Discrete Leja Points extracted from the subset of $1000$
Halton points in $[0,1]^2$ contained in the ball of radius $r=1/2$ centered
at $\overline{\mathbf{x}}=(0.5,0.5)$.}
\label{tab:estimates}
\end{table}

For simplicity, from now on we set 
\begin{equation*}
p:=p[y,\sigma ]
\end{equation*}%
and, to measure the error of approximation, we compute the relative errors 
\begin{equation}
fe=\frac{\left\vert f(\overline{\mathbf{x}})-p(\overline{\mathbf{x}}%
)\right\vert }{\left\vert f(\overline{\mathbf{x}})\right\vert },
\label{function_error}
\end{equation}%
\begin{equation}
ge=\frac{\left\Vert \nabla{f}(\overline{\mathbf{x}}) - \nabla{p}(\overline{%
\mathbf{x}}) \right\Vert _{2}}{\left\Vert \nabla{f}(\overline{\mathbf{x}})
\right\Vert _{2}},  \label{gradient_error}
\end{equation}%
and%
\begin{equation}
sde=\frac{\left\Vert \left( f_{xx}(\overline{\mathbf{x}}),f_{xy}(\overline{%
\mathbf{x}}),f_{yy}(\overline{\mathbf{x}})\right) -\left( p_{xx}(\overline{%
\mathbf{x}}),p_{xy}(\overline{\mathbf{x}}),p_{yy}(\overline{\mathbf{x}}%
)\right) \right\Vert _{2}}{\left\Vert \left( f_{xx}(\overline{\mathbf{x}}%
),f_{xy}(\overline{\mathbf{x}}),f_{yy}(\overline{\mathbf{x}})\right)
\right\Vert _{2}},  \label{second_order_error}
\end{equation}%
using the following bivariate test functions 
\begin{equation*}
\begin{array}{ll}
f_{1}\left( x,y\right) & =0.75\exp \Bigl(-\dfrac{(9x-2)^{2}+(9y-2)^{2}}{4}%
\Bigr)+0.50\exp \Bigl(-\dfrac{\left( 9x-7\right) ^{2}+\left( 9y-3\right) ^{2}%
}{4}\Bigr) \\ 
& +0.75\exp \Bigl(-\dfrac{(9x+1)^{2}}{49}-\dfrac{(9y+1)^{2}}{10}\Bigr)%
-0.20\exp \Bigl(-(9x-4)^{2}-(9y-7)^{2}\Bigr), \\ 
&  \\ 
f_{2}\left( x,y\right) & =e^{x+y}, \\ 
&  \\ 
f_{3}\left( x,y\right) & =2\cos (10x)\sin (10y)+\sin (10xy),%
\end{array}%
\end{equation*}%
where $f_{1}$ is the well known Franke's function and $f_{3}$ is an
oscillating function (see Figure \ref{fig:cosine_function}) both in Renka's
test set \cite{renka1999algorithm}, whereas $f_{2}$ is obtained by a
superposition of the univariate exponential with an inner product and then
is constant on the parallel hyperplanes $x+y=q$, $q\in \mathbb{R}$ (ridge
function). For each test function we approximate $D^{\nu }f\left( \overline{%
\mathbf{x}}\right) $ by 
\begin{equation}
D^{\nu }f\left( \overline{\mathbf{x}}\right) \approx \nu !\dfrac{c_{\nu ,h}}{%
h^{\left\vert \nu \right\vert }},\qquad \left\vert \nu \right\vert \leq 2,
\label{Dnu2}
\end{equation}%
where $c_{\nu ,h}$, with $h\leq r$ defined in \eqref{h_scaling}, are the
coefficients of the interpolating polynomial (\ref{interpolation_poly_scaled}%
) at the point $\overline{\mathbf{x}}$. Interpolation is made at Discrete
Leja Points in $B_r(\overline{\mathbf{x}})$ for $r=\frac{1}{2},\frac{3}{8},%
\frac{1}{4},\frac{1}{8}$ at a sequence of degrees $d$. We stress that for a
fixed radius $r$, unisolvence of interpolation is possible only for a
finite number of degrees, that is until there are enough scattered points in
the ball.

In the first experiment we start from $1000$, $2000$ and $4000$ Halton and
uniform random points and we set $\overline{\mathbf{x}}=\left(0.5,0.5\right)$
(see Figure \ref{fig:Halton_balls}). For the test function $f_{1}$, the
numerical results are displayed in Figures \ref{fig:Franke_Halton}-\ref%
{fig:Franke_Random}.

\begin{figure}[t]
{\small \centering 
\parbox{.32\linewidth}{\centering
    \includegraphics[width=1.1\linewidth]{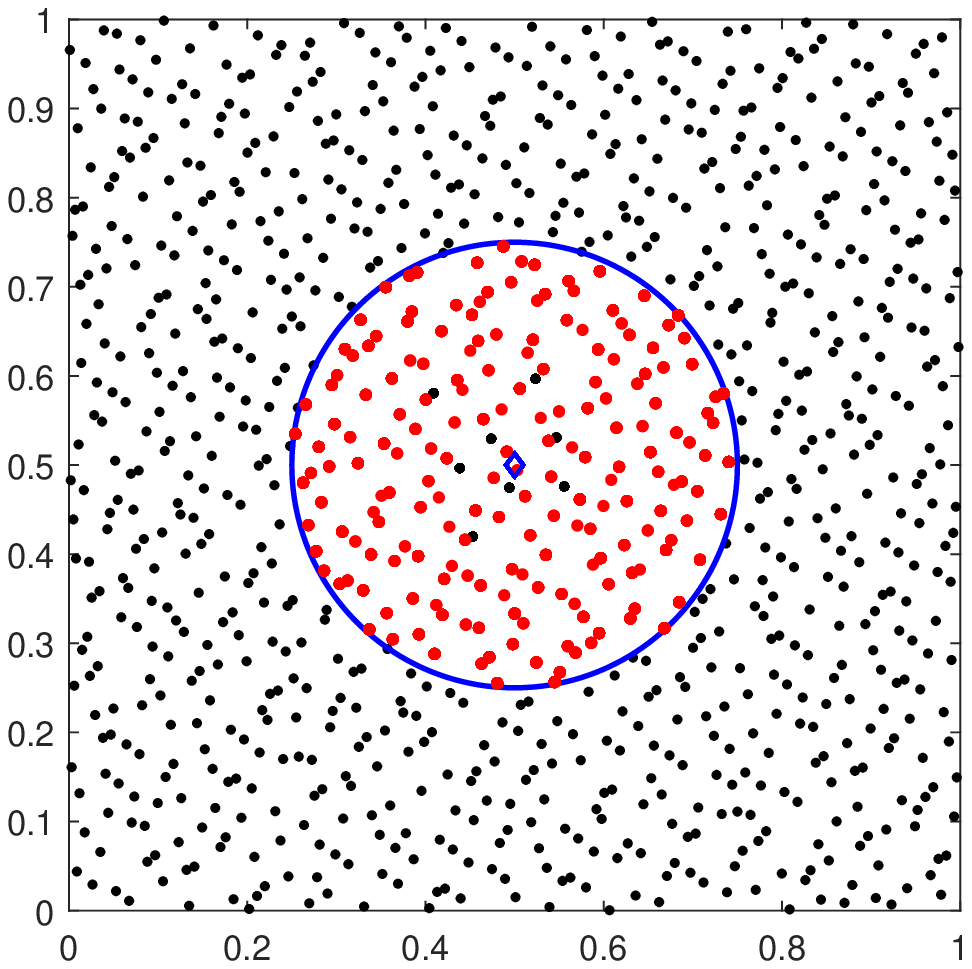}
} 
\parbox{.32\linewidth}{\centering
    \includegraphics[width=1.1\linewidth]{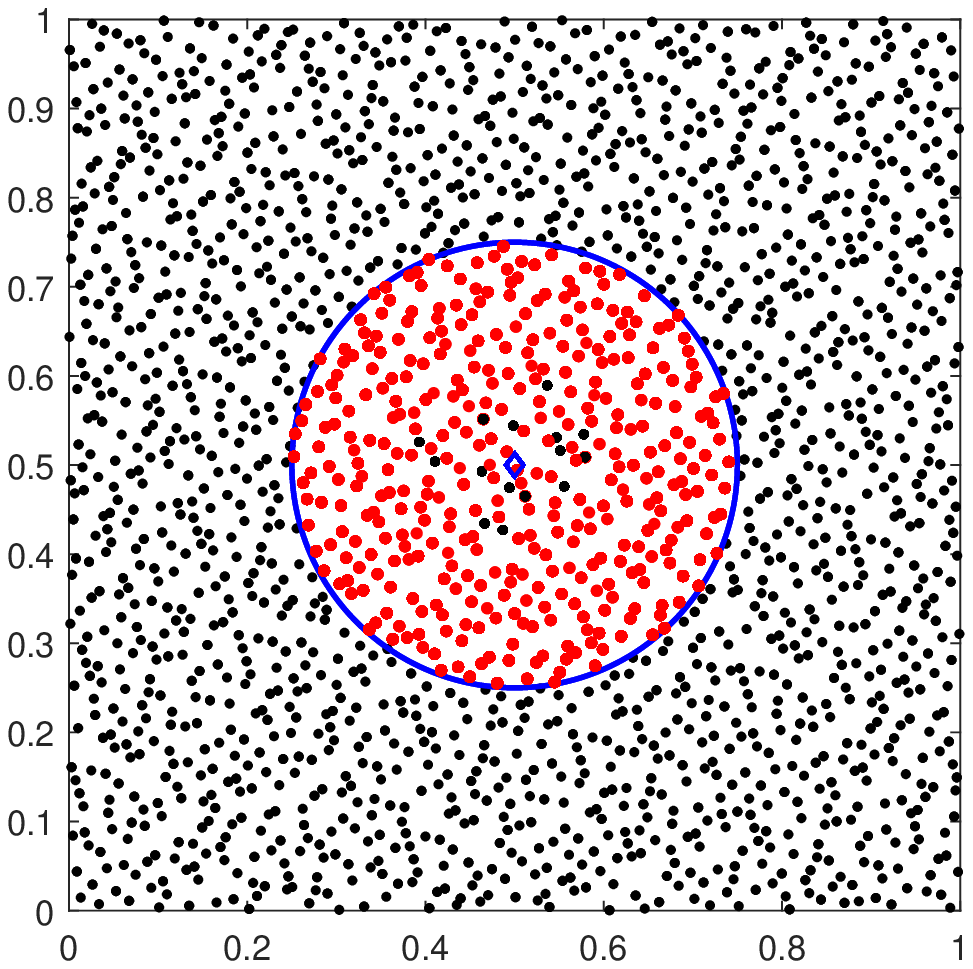}
} 
\parbox{.32\linewidth}{\centering
    \includegraphics[width=1.1\linewidth]{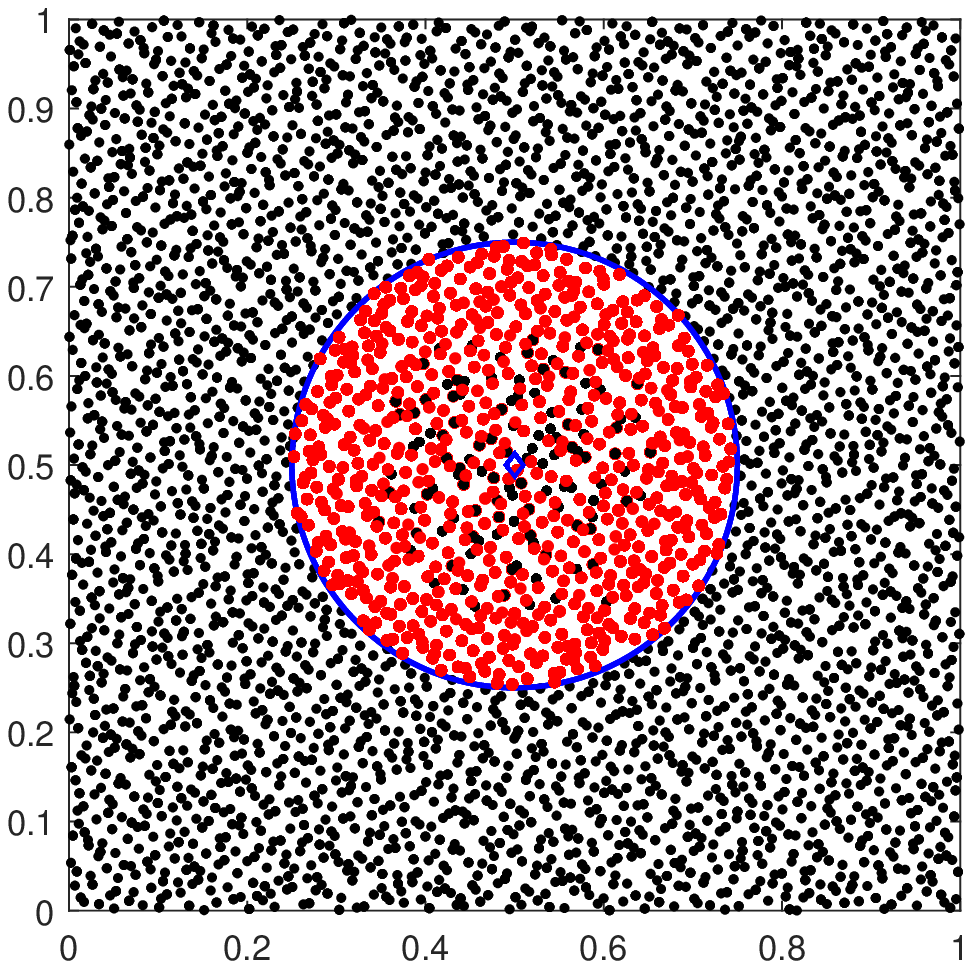}
}}
\caption{The interpolation points (in red) in the ball of radius $r=1/4$
centered at $(0.5,0.5)$ selected from $1000$ (left), $2000$ (center) and $4000$
(right) Halton points at degrees $d=17,25,35$, respectively, corresponding
to minimal errors in Figure \protect\ref{fig:Franke_Halton}.}
\label{fig:Halton_balls}
\end{figure}

\begin{figure}[t]
{\small \centering 
\parbox{.32\linewidth}{\centering
    \includegraphics[width=1.1\linewidth]{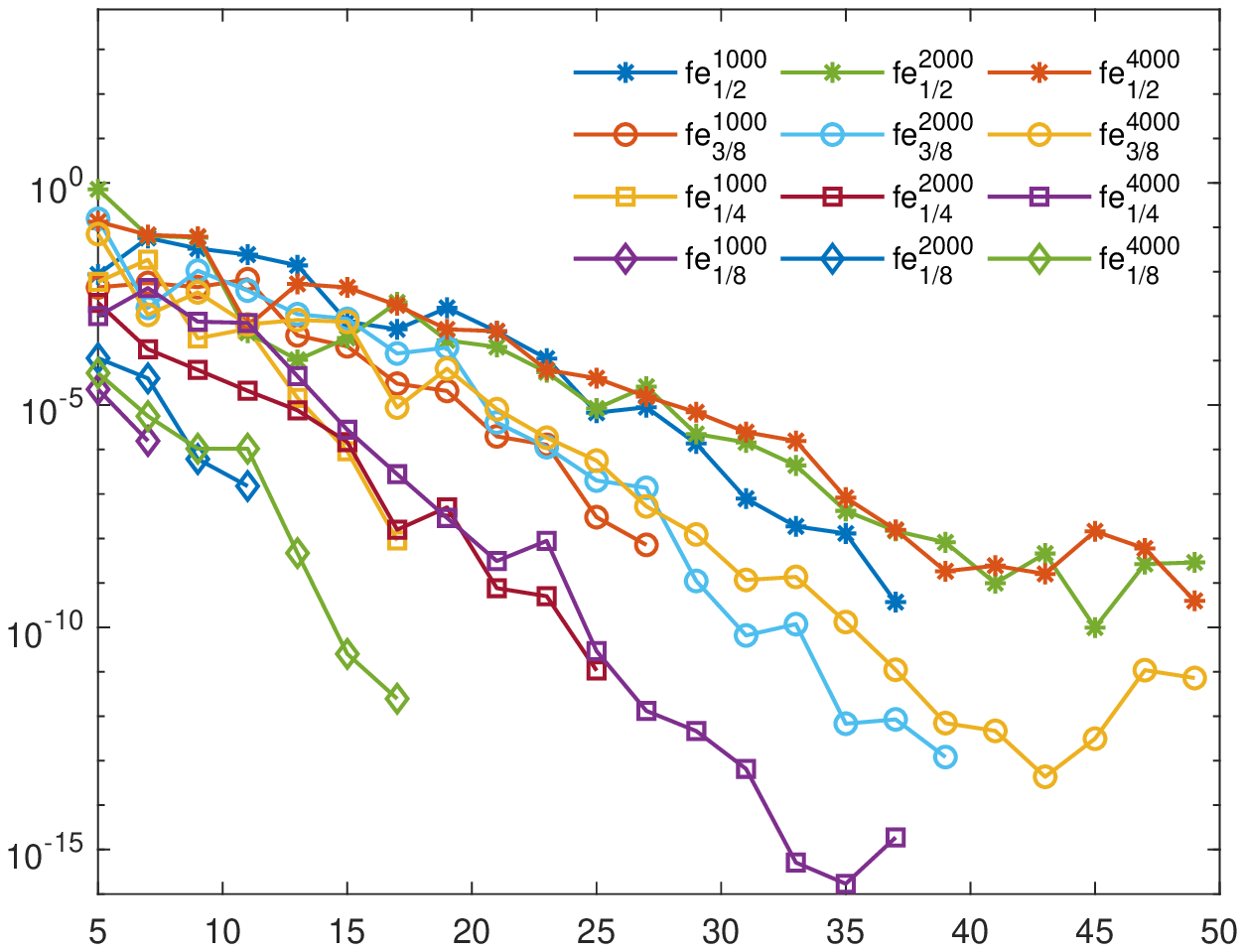}
} 
\parbox{.32\linewidth}{\centering
    \includegraphics[width=1.1\linewidth]{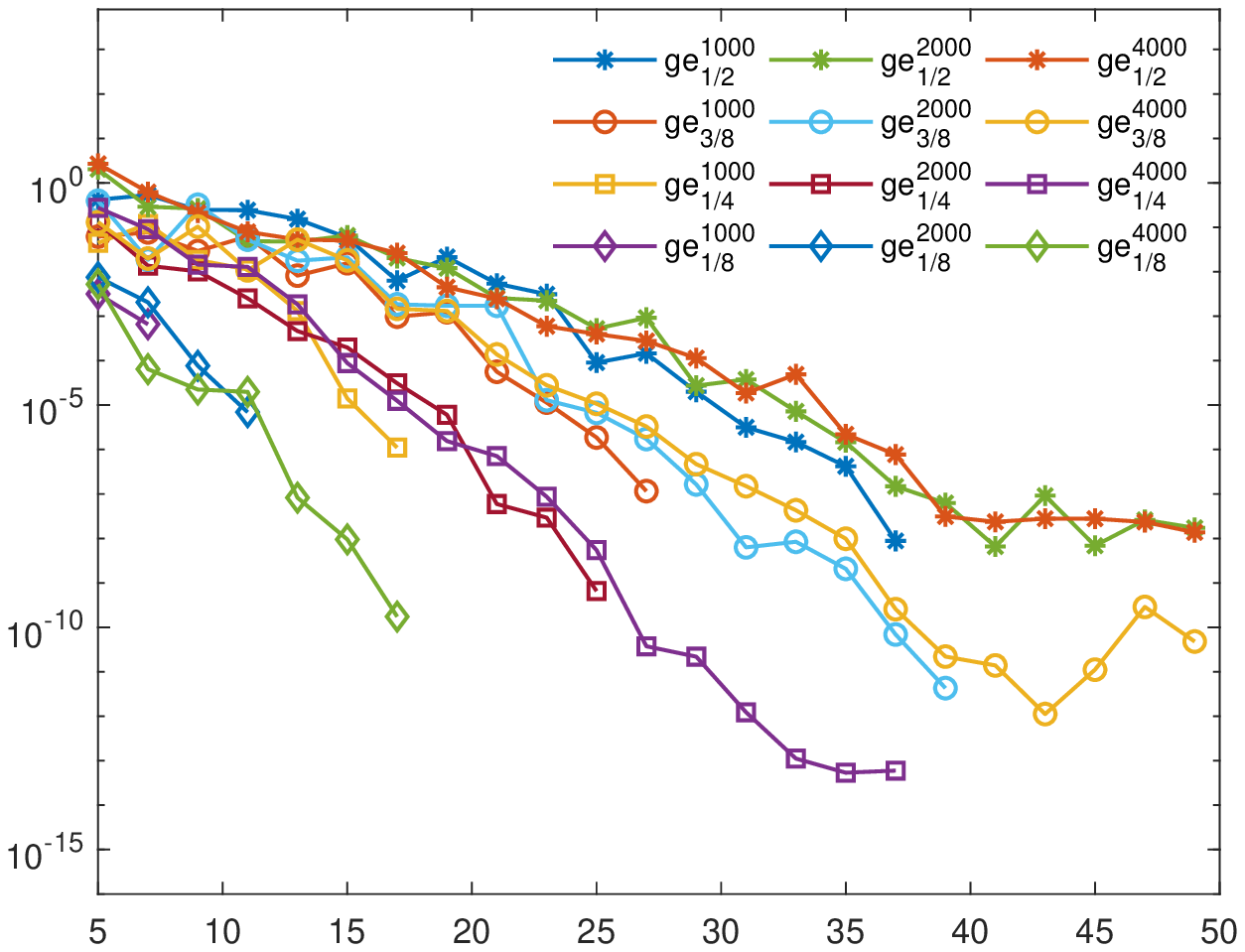}
} 
\parbox{.32\linewidth}{\centering
    \includegraphics[width=1.1\linewidth]{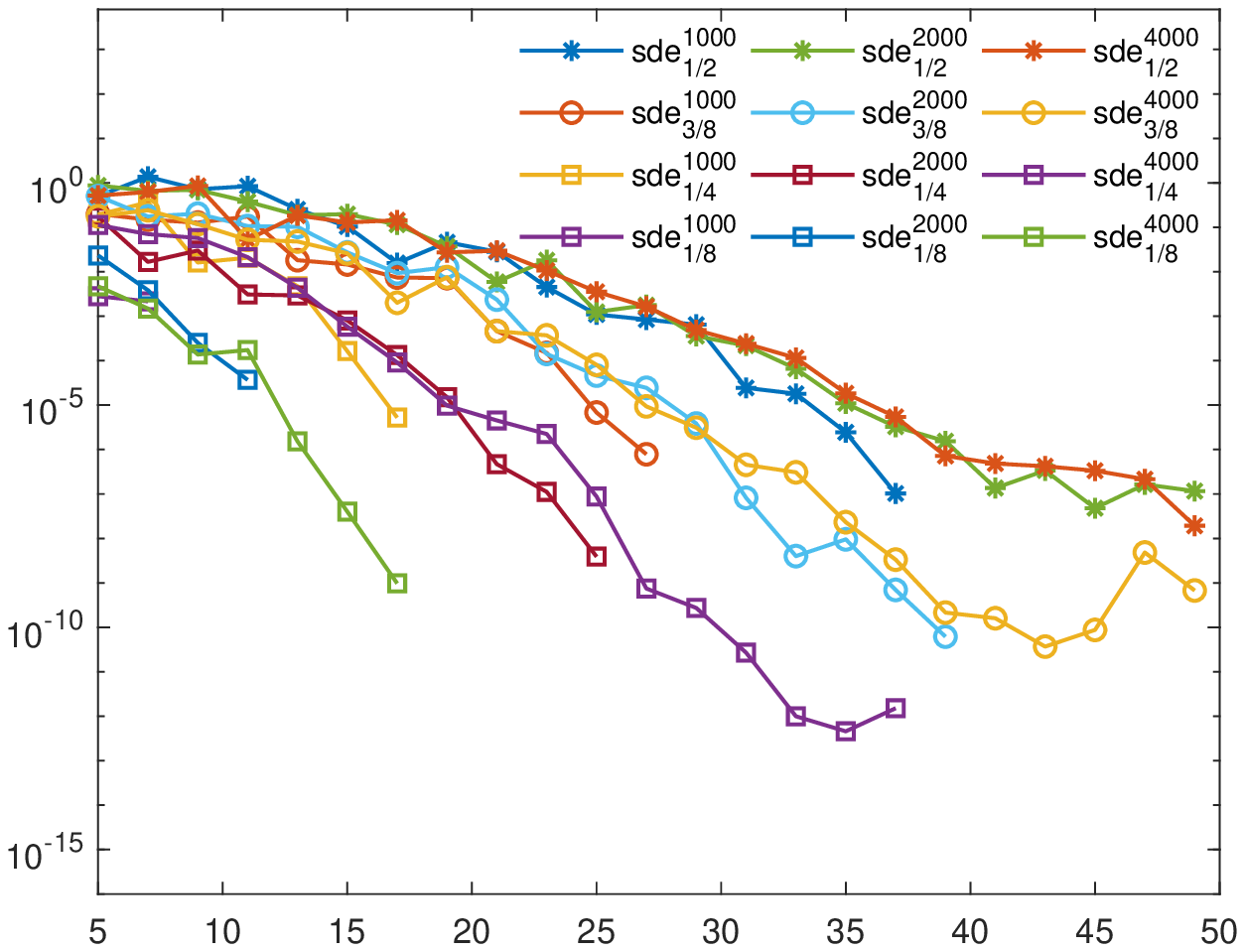}
}}
\caption{Relative errors \eqref{function_error}-\eqref{second_order_error}
for function $f_1$ by using the subsets of $1000$, $2000$ and $4000$ Halton
points intersecting $B_r\left(\overline{\mathbf{x}}\right)$, where $%
\overline{\mathbf{x}}=\left(0.5,0.5\right)$ and $r=\frac{1}{2},\frac{3}{8},%
\frac{1}{4},\frac{1}{8}$ on a sequence of degrees. Note that shorter
sequences (missing marks) are due to a lack of points for interpolation of
higher degree.}
\label{fig:Franke_Halton}
\end{figure}

\begin{figure}[t]
{\small \centering 
\parbox{.32\linewidth}{\centering
    \includegraphics[width=1.1\linewidth]{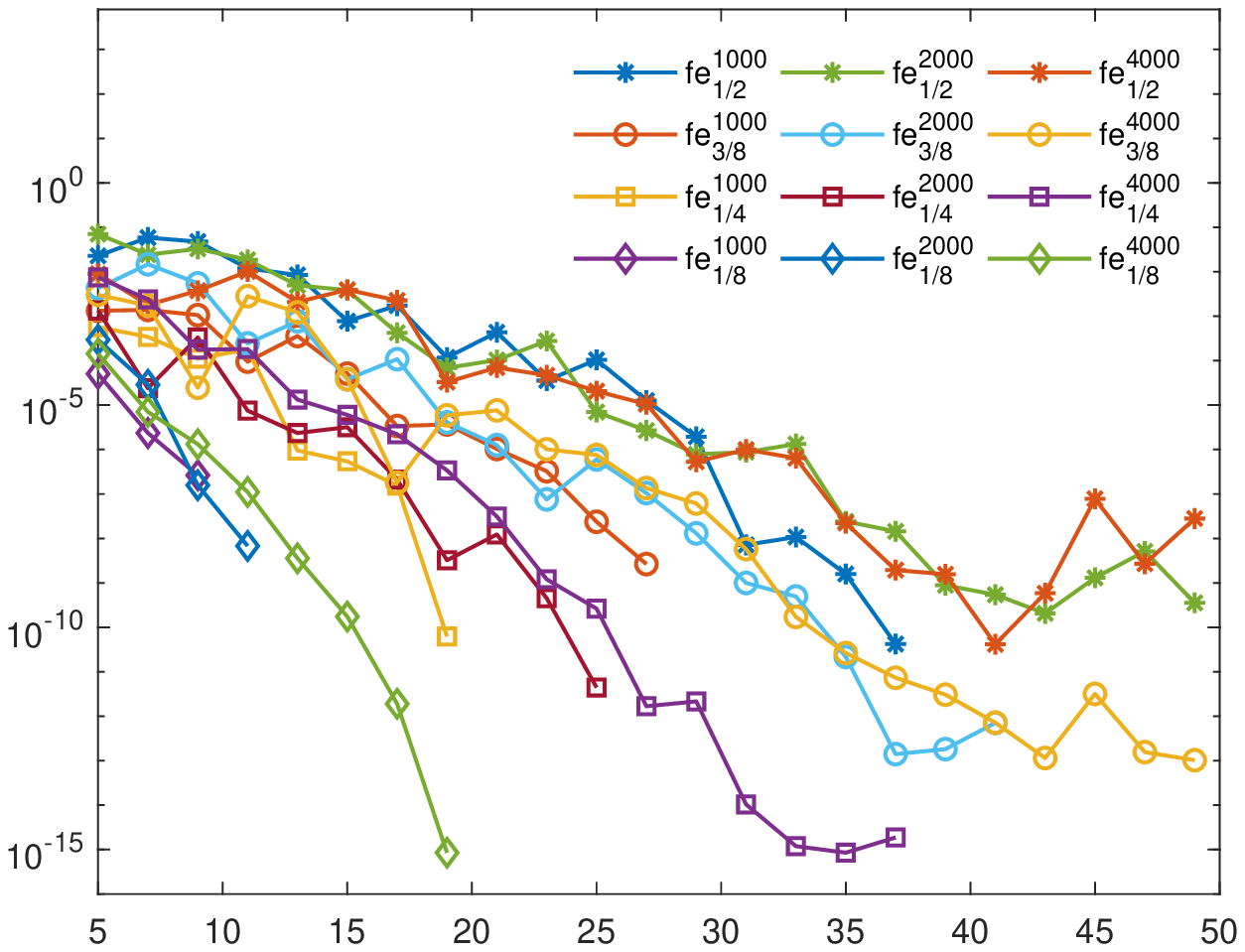}
} 
\parbox{.32\linewidth}{\centering
    \includegraphics[width=1.1\linewidth]{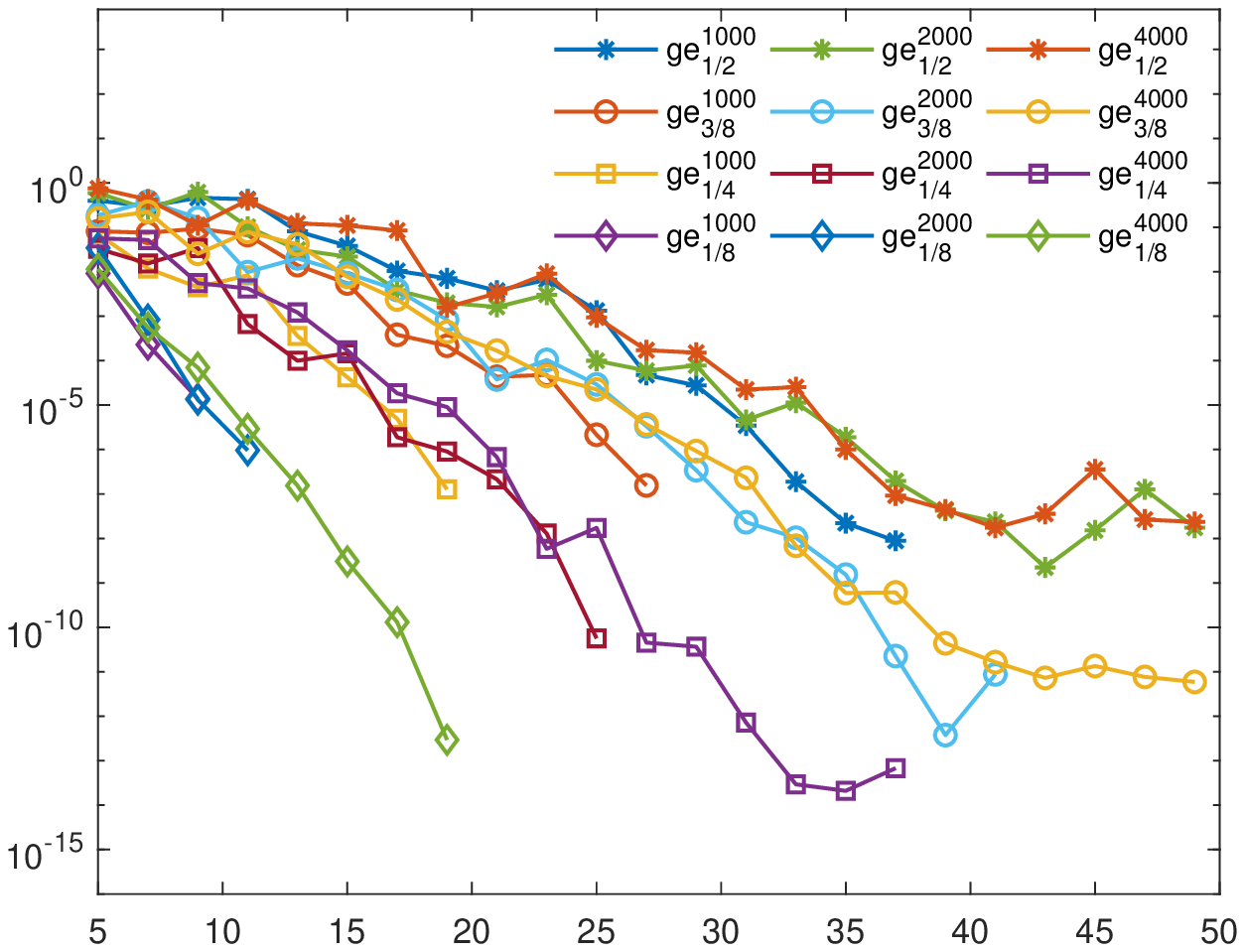}
} 
\parbox{.32\linewidth}{\centering
    \includegraphics[width=1.1\linewidth]{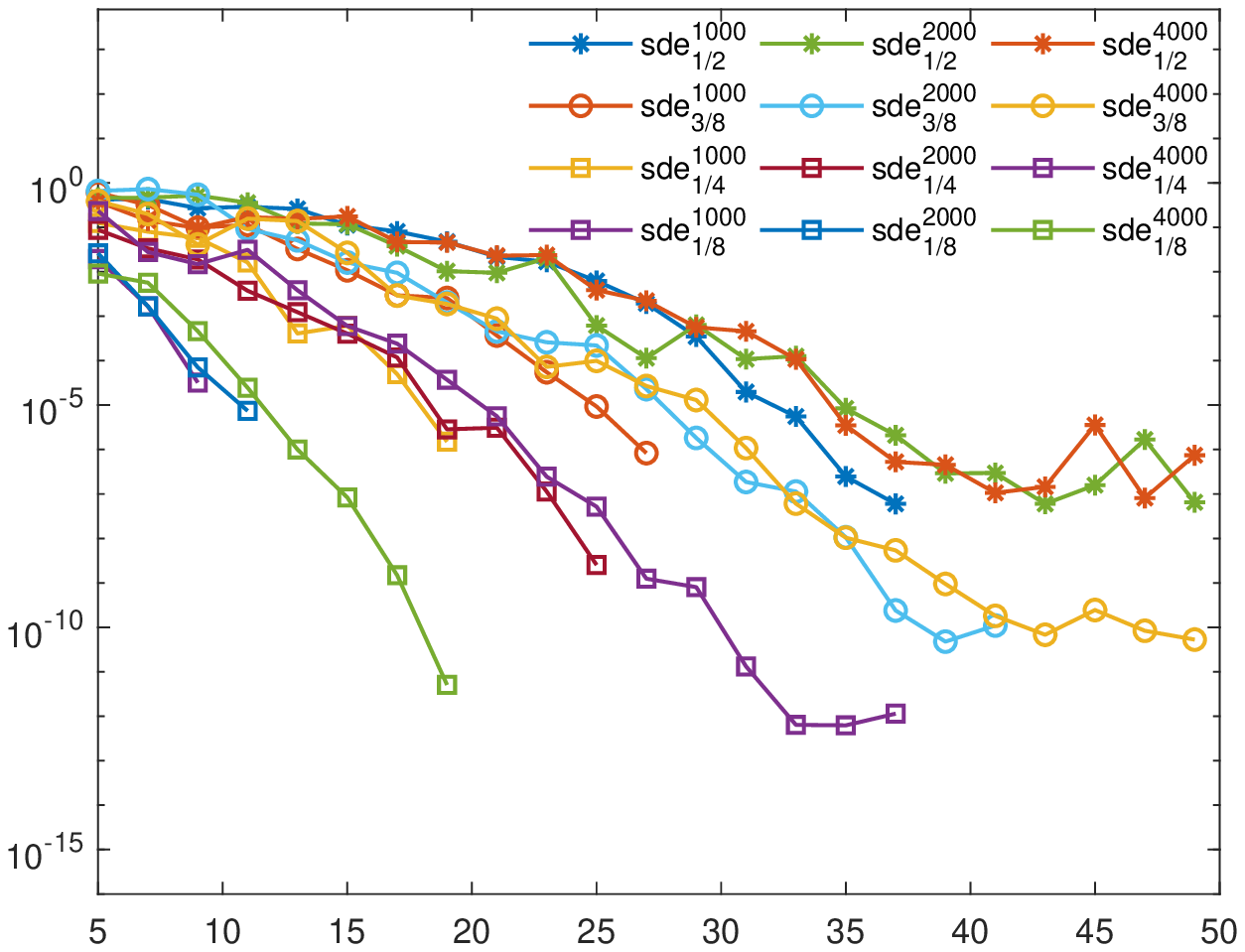}
}}
\caption{As in Figure \protect\ref{fig:Franke_Halton} starting from uniform
random points.}
\label{fig:Franke_Random}
\end{figure}

In the second experiment, we start from $2000$ Halton points for the test
function $f_2$, again with $\overline{\mathbf{x}}=\left(0.5,0.5\right)$. The
numerical results are displayed in Figure \ref{fig:Exponential_Halton}.

\begin{figure}[t]
{\small \centering 
\parbox{.32\linewidth}{\centering
    \includegraphics[width=1.1\linewidth]{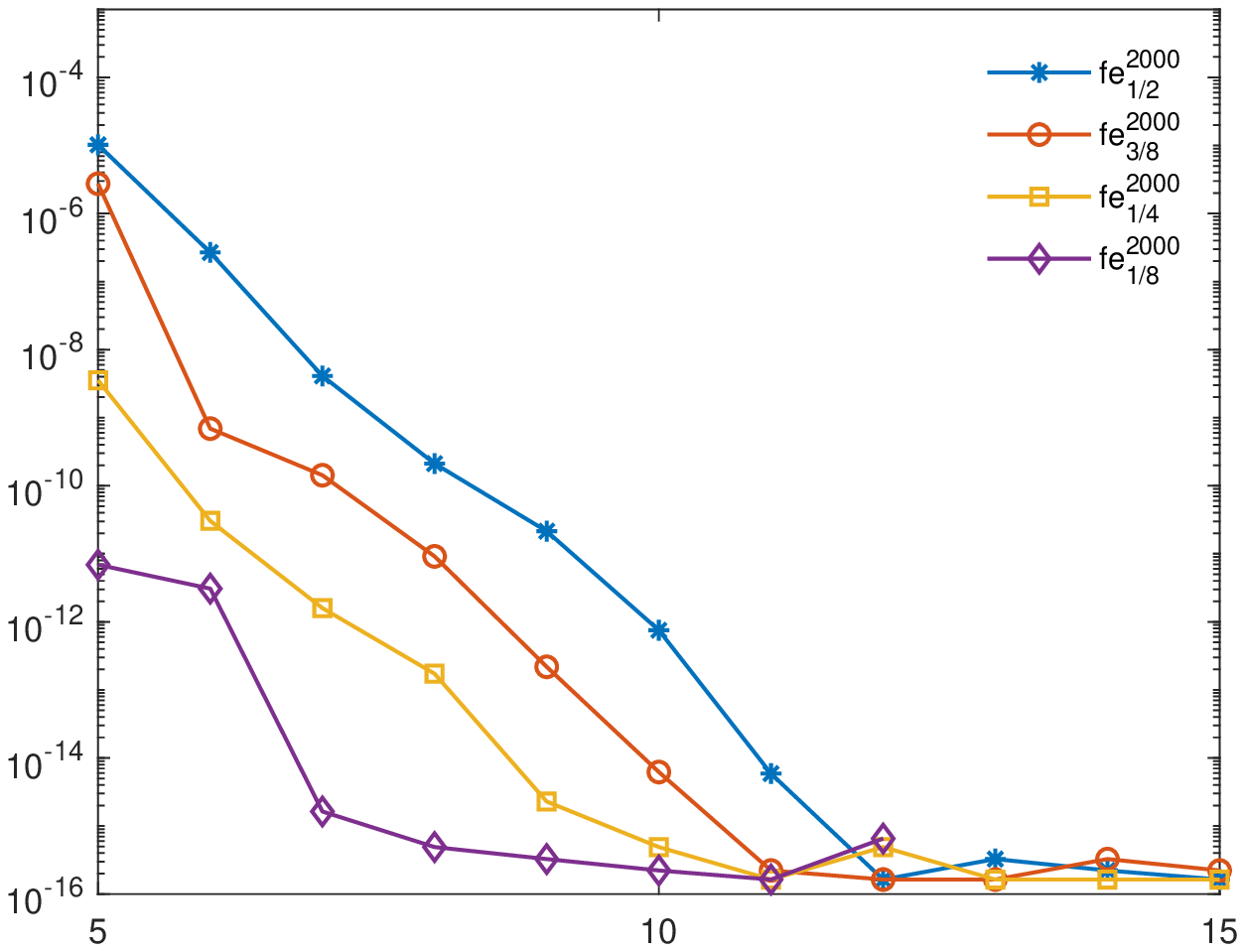}
} 
\parbox{.32\linewidth}{\centering
    \includegraphics[width=1.1\linewidth]{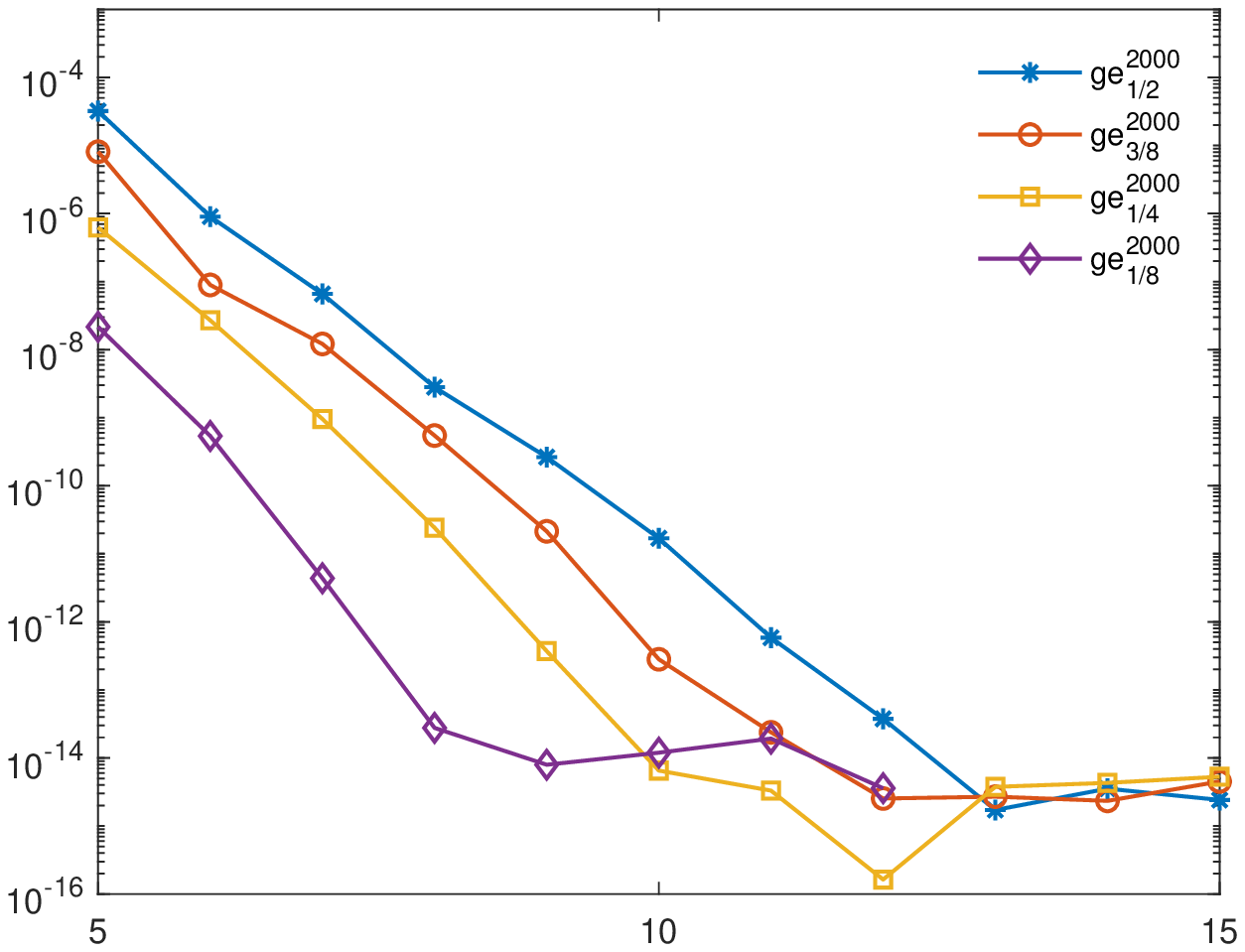}
} 
\parbox{.32\linewidth}{\centering
    \includegraphics[width=1.1\linewidth]{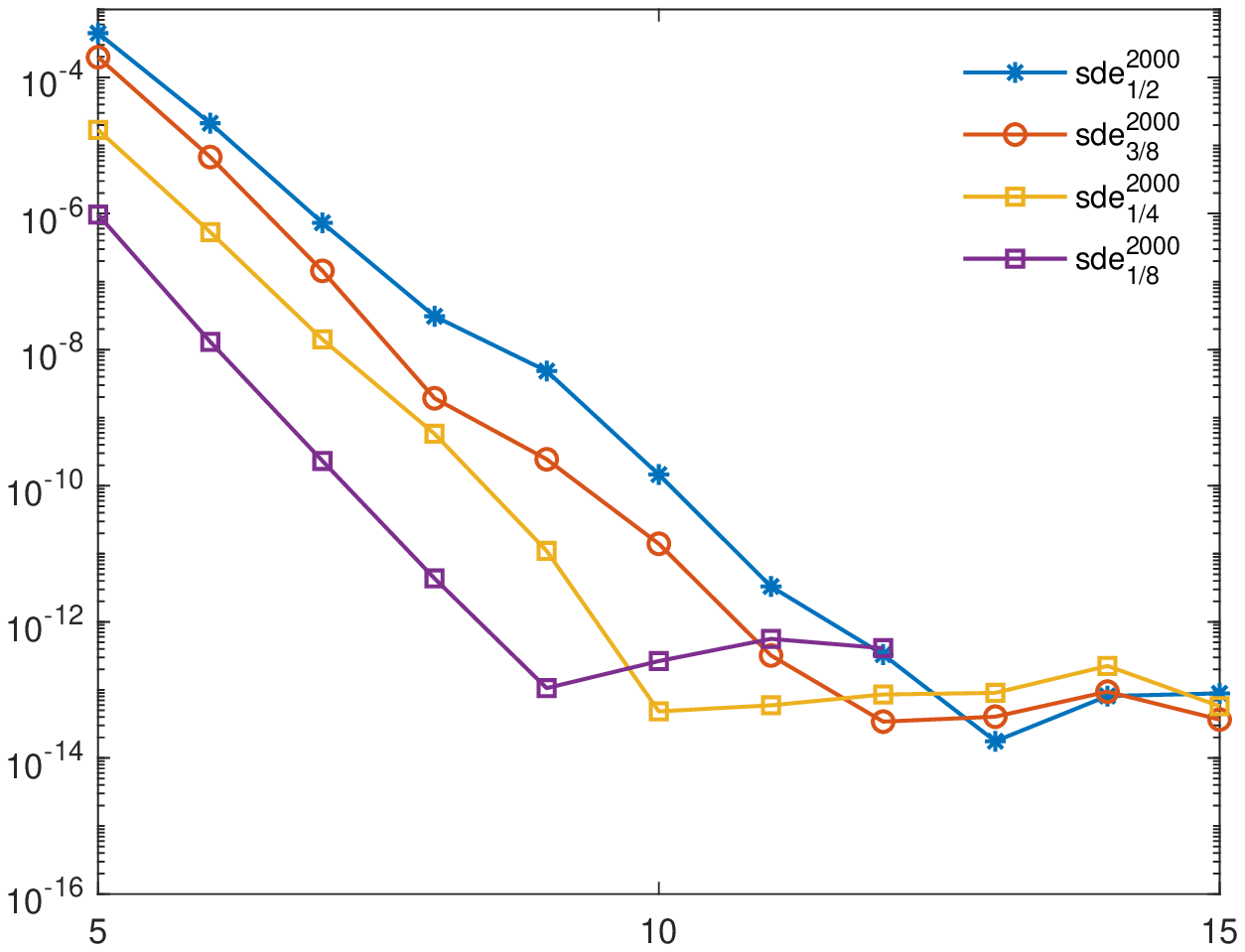}
}}
\caption{Relative errors \eqref{function_error}-\eqref{second_order_error}
for function $f_2$ using $2000$ Halton points with $\overline{\mathbf{x}}%
=\left(0.5,0.5\right)$.}
\label{fig:Exponential_Halton}
\end{figure}

In the third experiment, for the test function $f_3$, we start
from $4000$ Halton points choosing $\overline{\mathbf{x}}$ at the center,
then close to the right side and finally close to the north-east corner (see
Figures \ref{fig:points_in_the_ball}-\ref{fig:cosine_function}). The
numerical results are displayed in Figures \ref{fig:Cosine_Halton_center}, \ref{fig:Cosine_Halton_side} and
\ref{fig:Cosine_Halton_vertex}. We repeat the same experiments choosing $%
\overline{\mathbf{x}}$ on the right side and at the north-east corner and we
report the results in Figures \ref{fig:Cosine_4000_Halton_exact_side} and \ref%
{fig:Cosine_4000_Halton_exact_vertex}.

\begin{figure}[t]
{\small \centering 
\parbox{.32\linewidth}{\centering
    \includegraphics[width=1.2\linewidth]{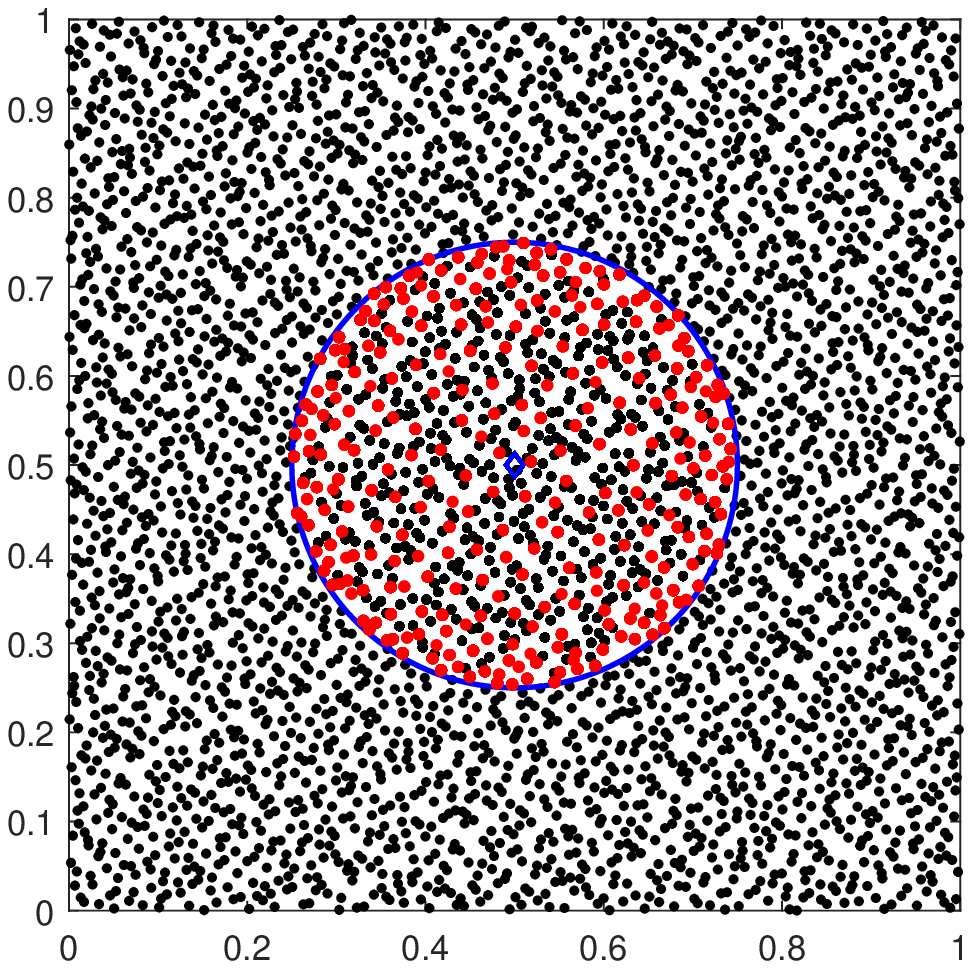}
} 
\parbox{.32\linewidth}{\centering
    \includegraphics[width=1.2\linewidth]{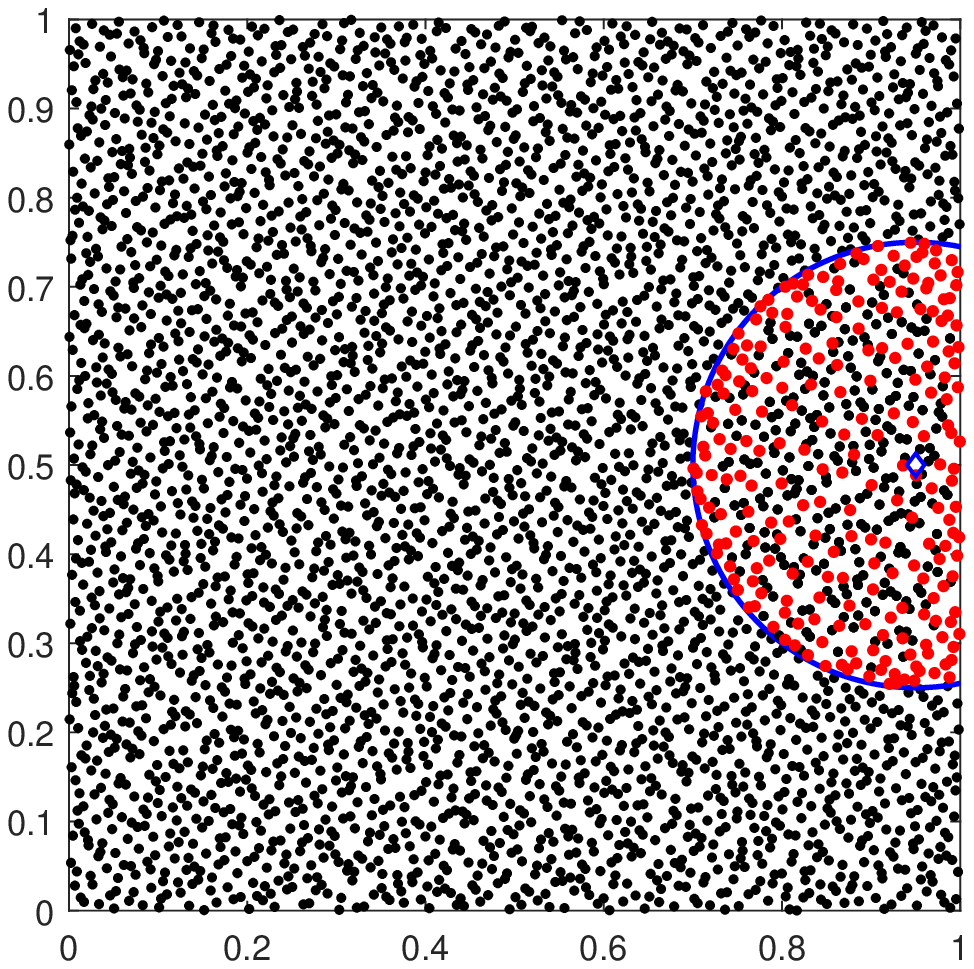}
} 
\parbox{.32\linewidth}{\centering
    \includegraphics[width=1.2\linewidth]{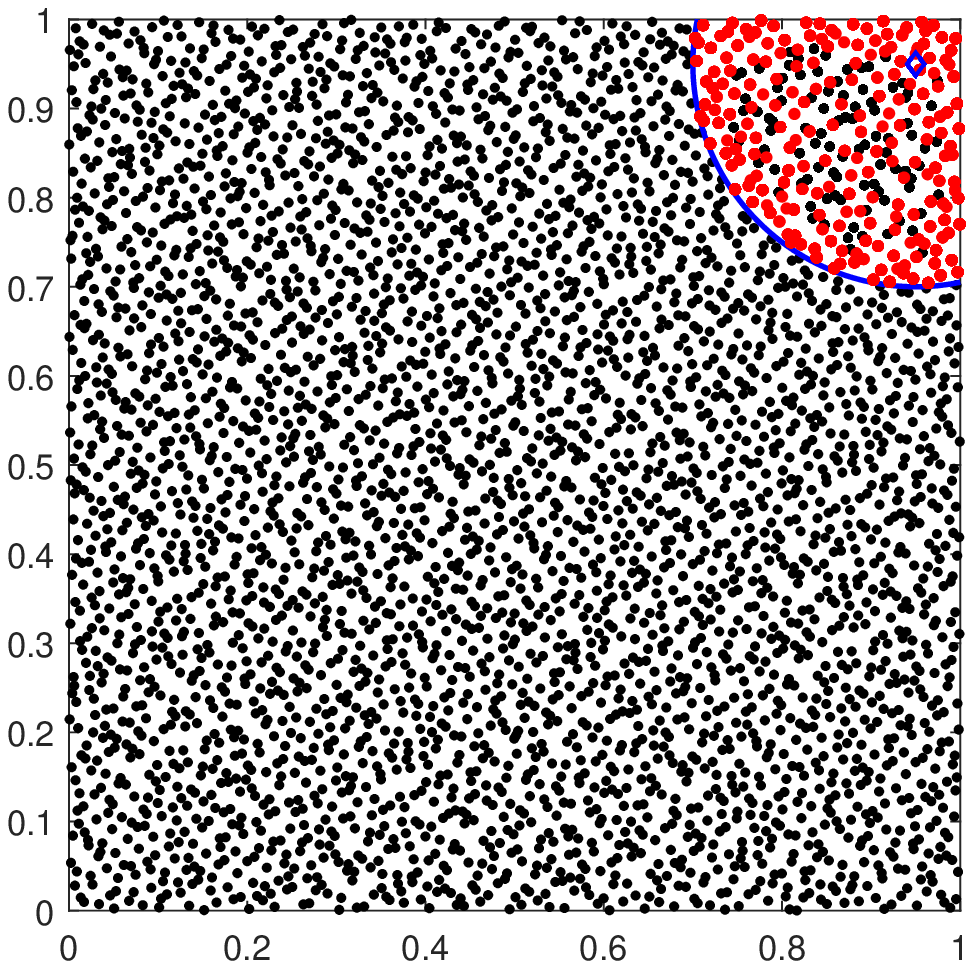}
}}
\caption{The interpolation points (in red) in the ball of radius 
$r=1/4$ centered at $\overline{\mathbf{x}}=(0.5,0.5)$ (left), $(0.95,0.5)$
(center), $(0.95,0.95)$ (right) selected from $4000$ Halton points for $d=20$.
}
\label{fig:points_in_the_ball}
\end{figure}

\begin{figure}[t]
{\small \centering 
\parbox{.32\linewidth}{\centering
    \includegraphics[width=1.1\linewidth]{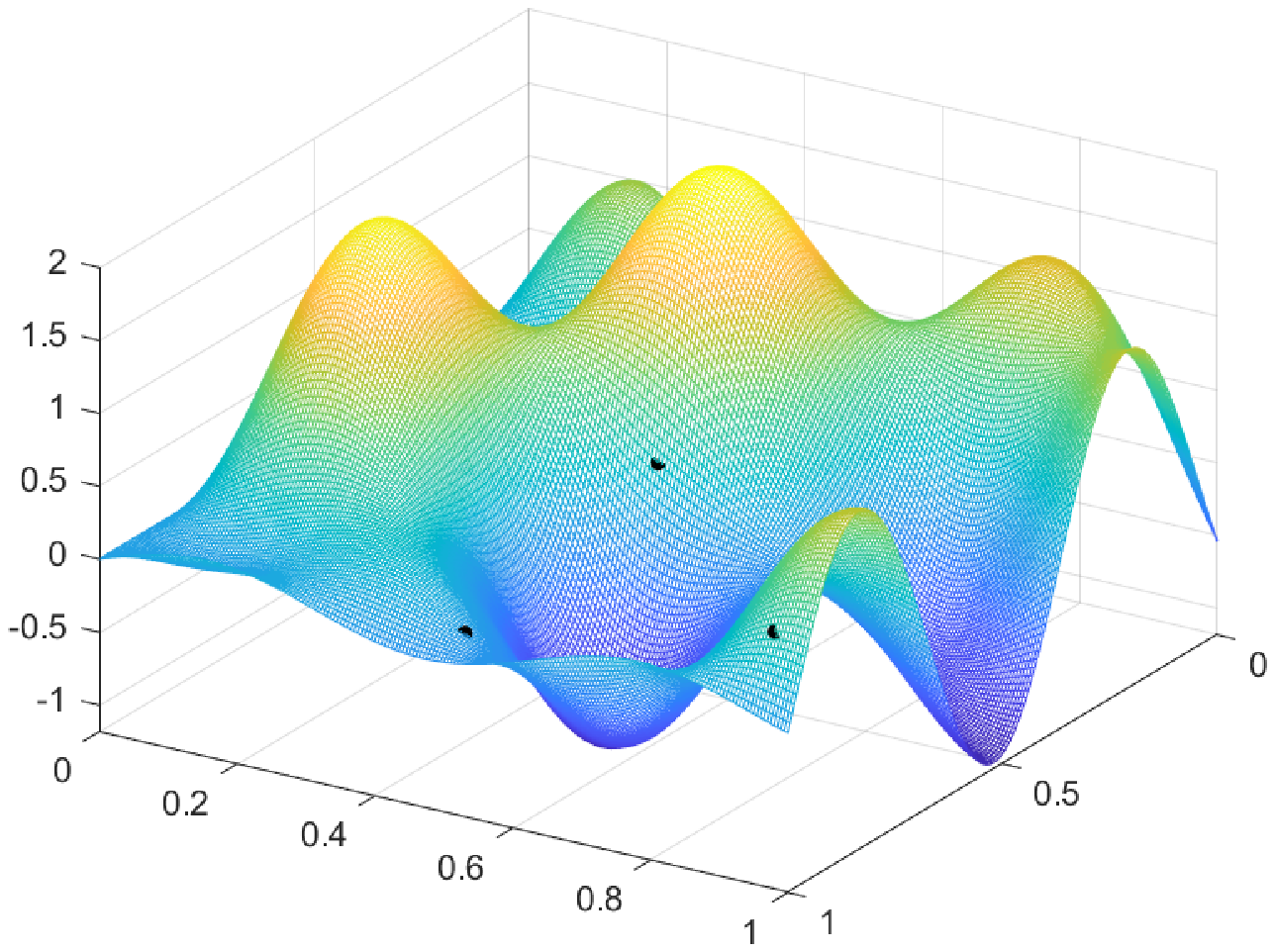}
} 
\parbox{.32\linewidth}{\centering
    \includegraphics[width=1.1\linewidth]{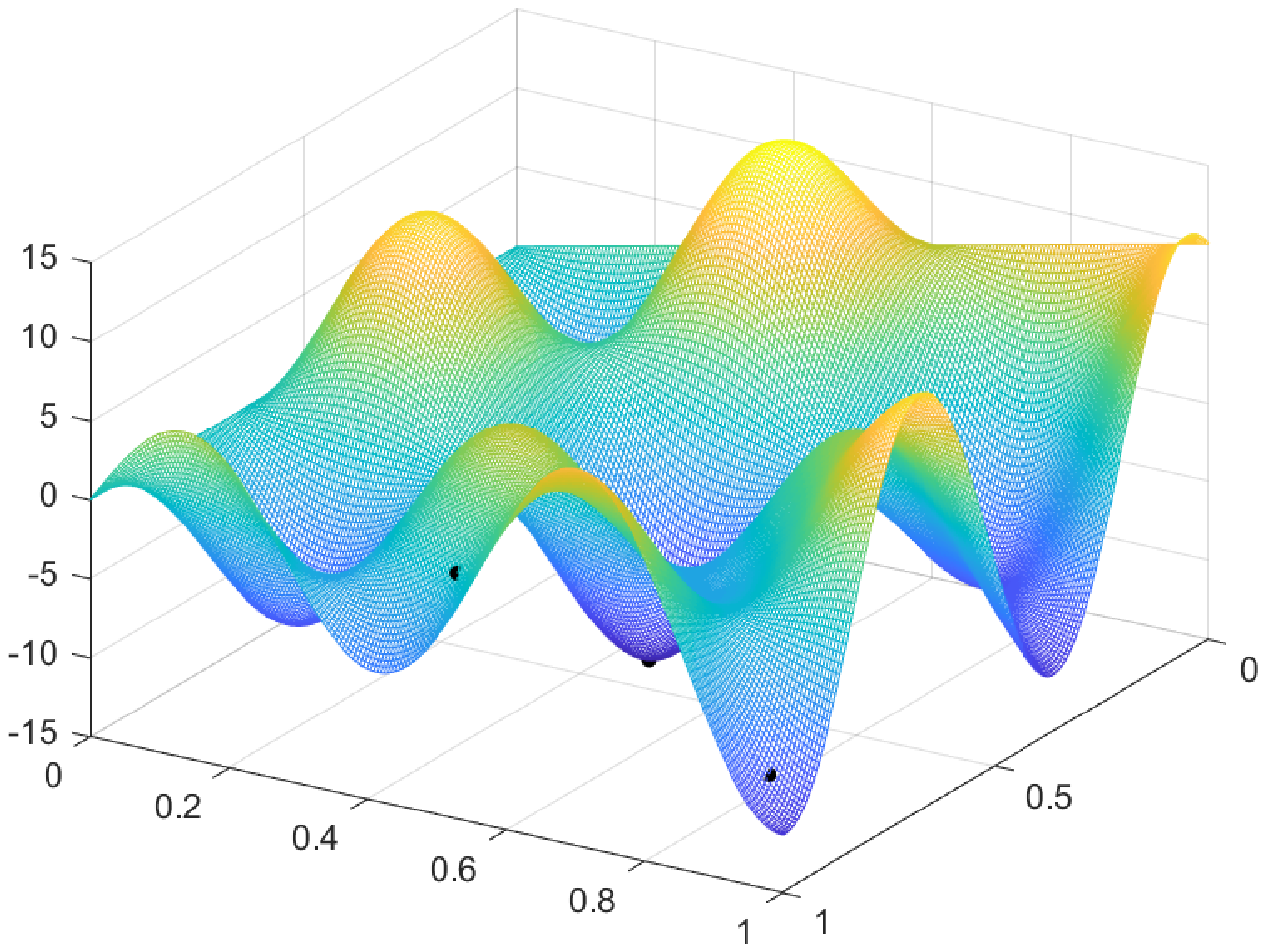}
} 
\parbox{.32\linewidth}{\centering
    \includegraphics[width=1.1\linewidth]{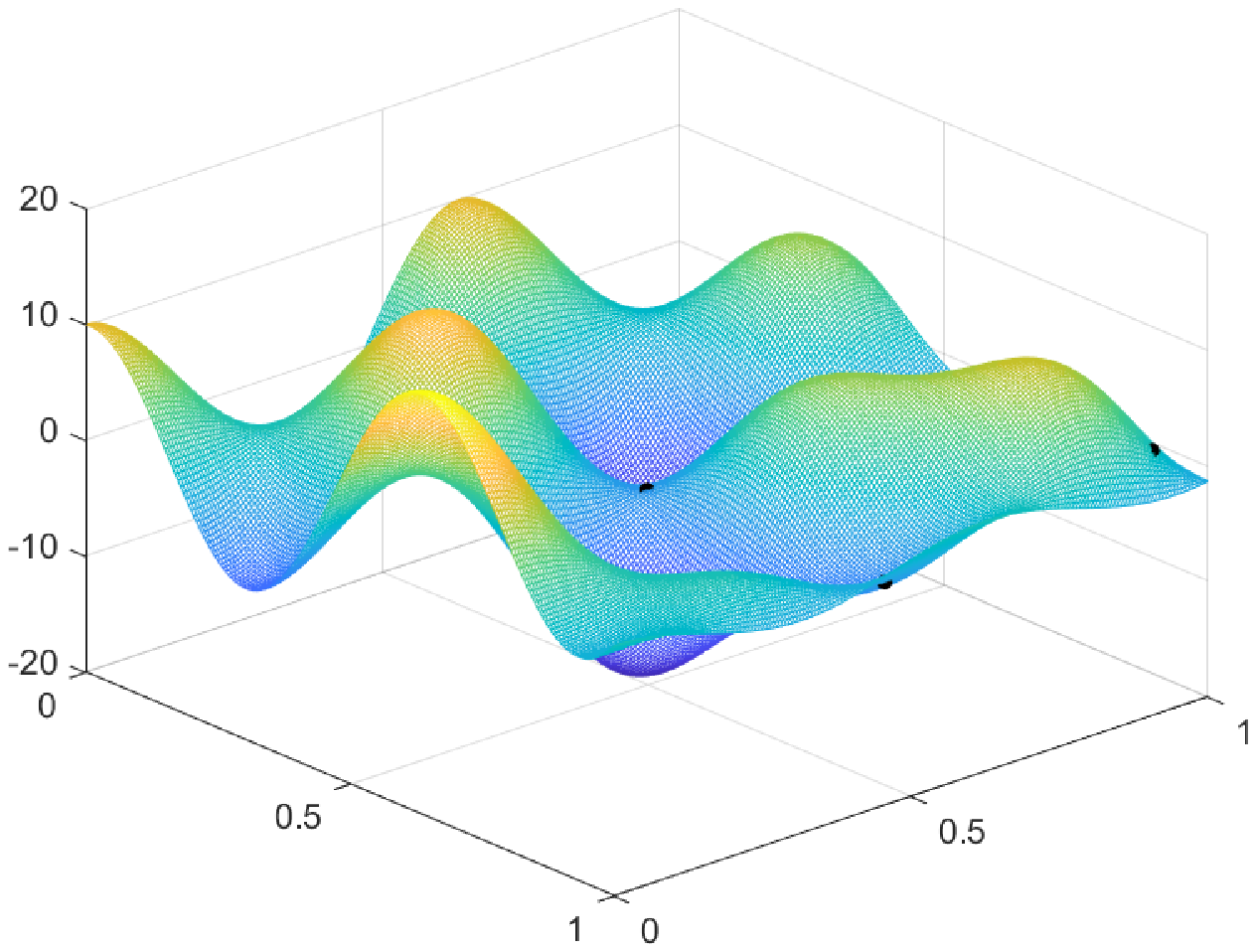}
}}
\caption{Plot of the function $f_3$ (left), $\frac{\partial f_3}{%
\partial x} $ (center), $\frac{\partial f_3}{\partial y} $ (right); the
surface points corresponding to $\overline{\mathbf{x}}=(0.5,0.5)$, $%
(0.95,0.5)$, and $(0.95,0.95)$ are displayed with black circles. }
\label{fig:cosine_function}
\end{figure}

\begin{figure}[t]
{\small \centering 
\parbox{.32\linewidth}{\centering
    \includegraphics[width=1.1\linewidth]{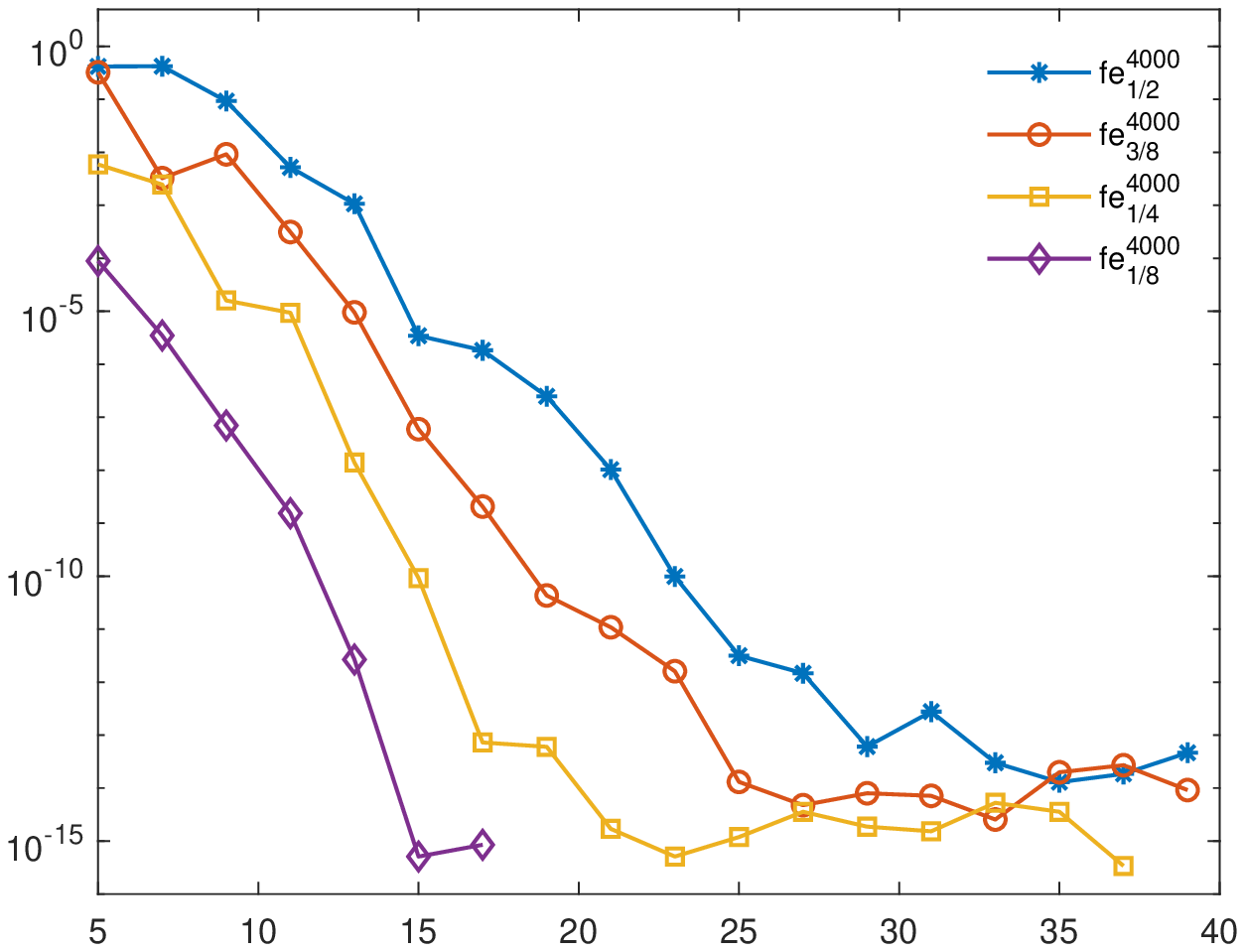}
} 
\parbox{.32\linewidth}{\centering
    \includegraphics[width=1.1\linewidth]{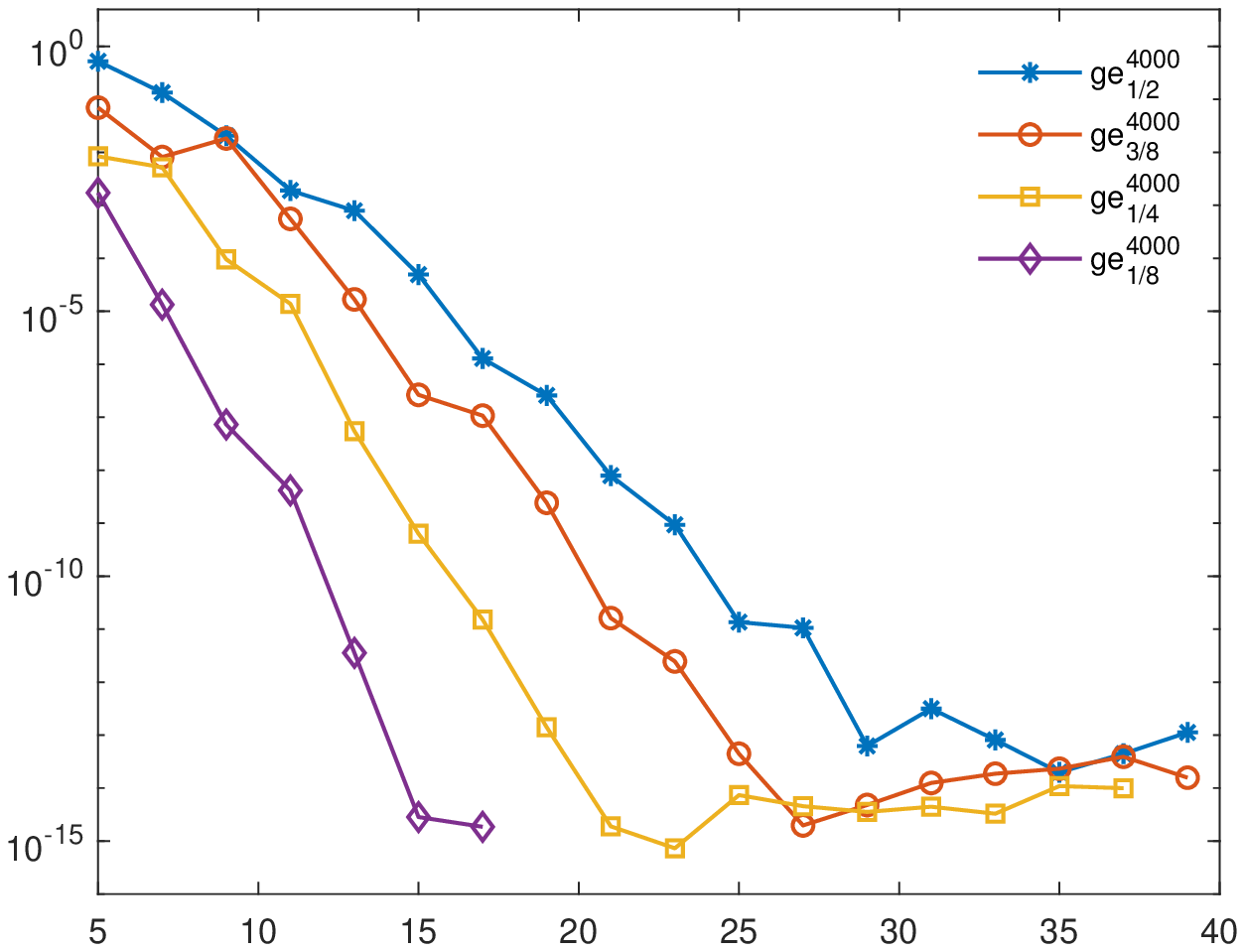}
} 
\parbox{.32\linewidth}{\centering
    \includegraphics[width=1.1\linewidth]{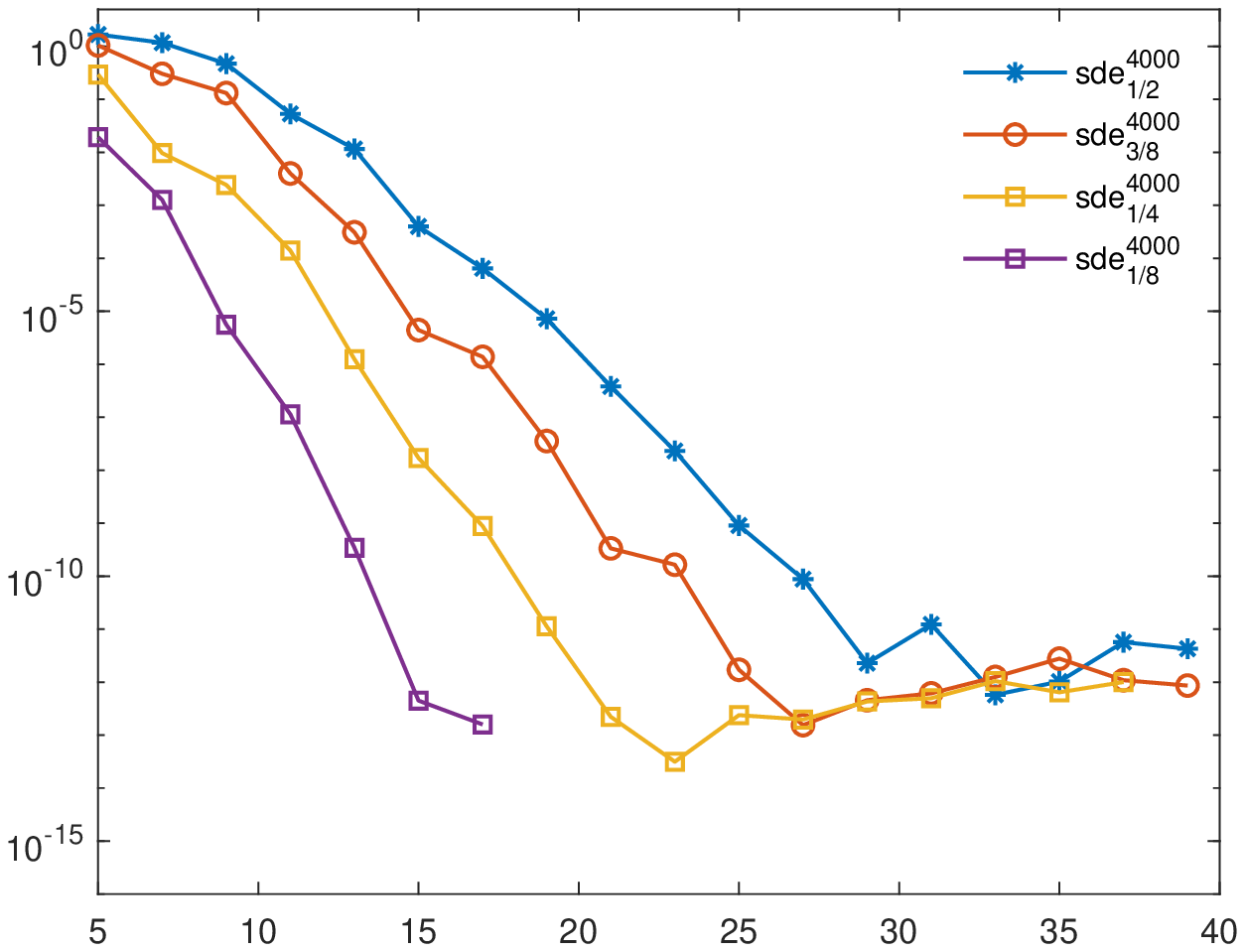}
}}
\caption{Relative errors \eqref{function_error}-\eqref{second_order_error}
for function $f_3$ by using $4000$ Halton points with $\overline{\mathbf{x}}%
=(0.5,0.5)$. }
\label{fig:Cosine_Halton_center}
\end{figure}

\begin{figure}[t]
{\small \centering 
\parbox{.32\linewidth}{\centering
    \includegraphics[width=1.1\linewidth]{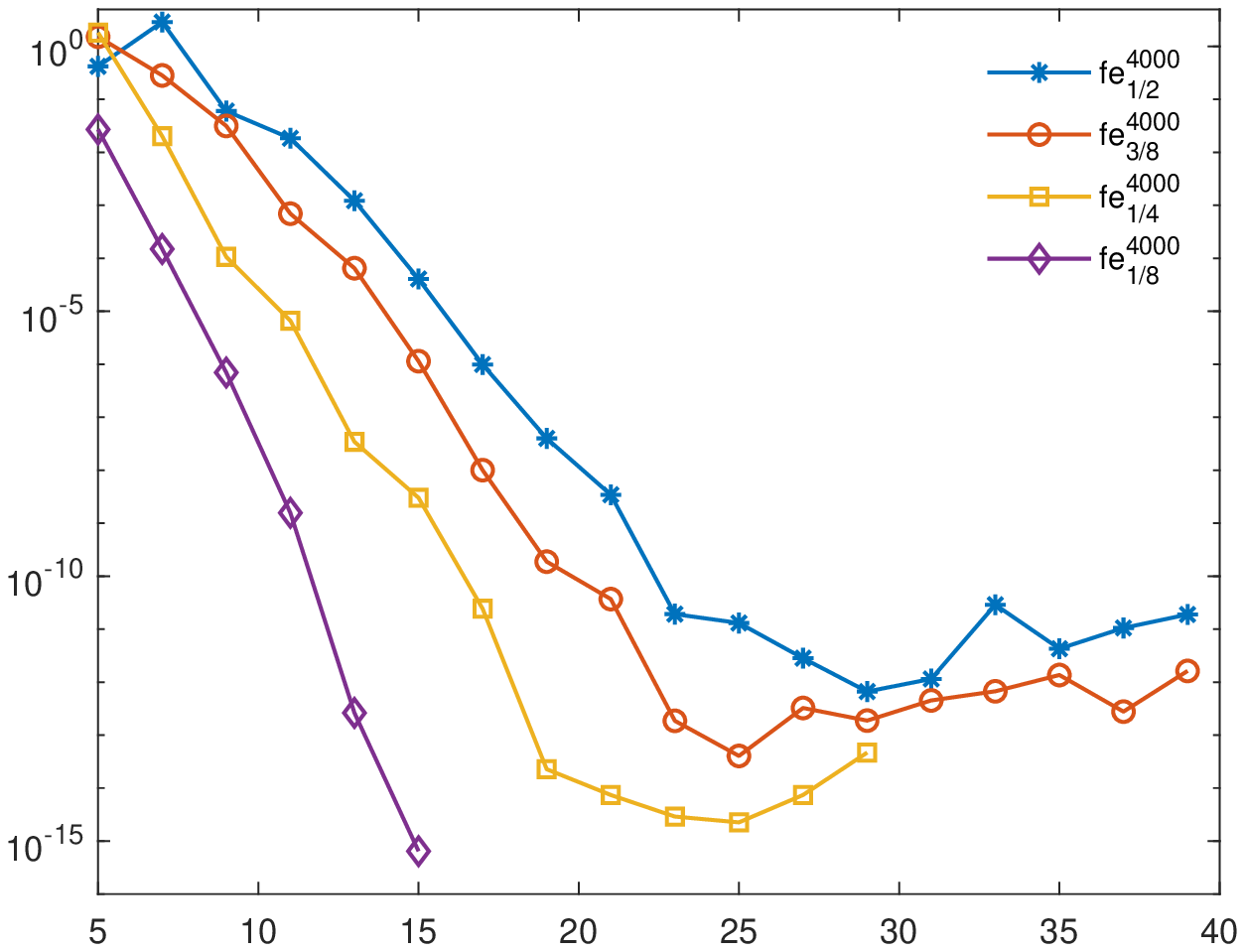}
} 
\parbox{.32\linewidth}{\centering
    \includegraphics[width=1.1\linewidth]{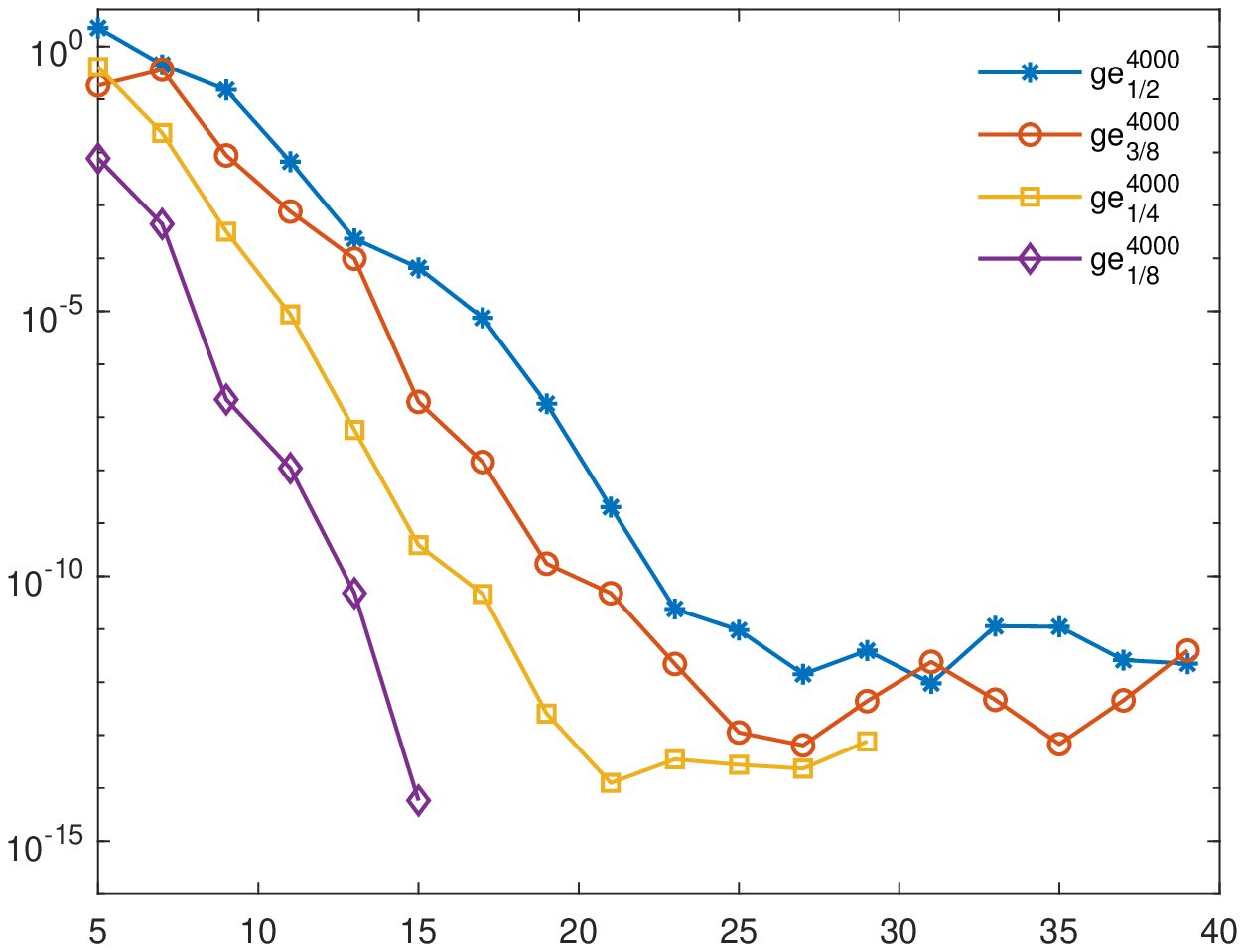}
} 
\parbox{.32\linewidth}{\centering
    \includegraphics[width=1.1\linewidth]{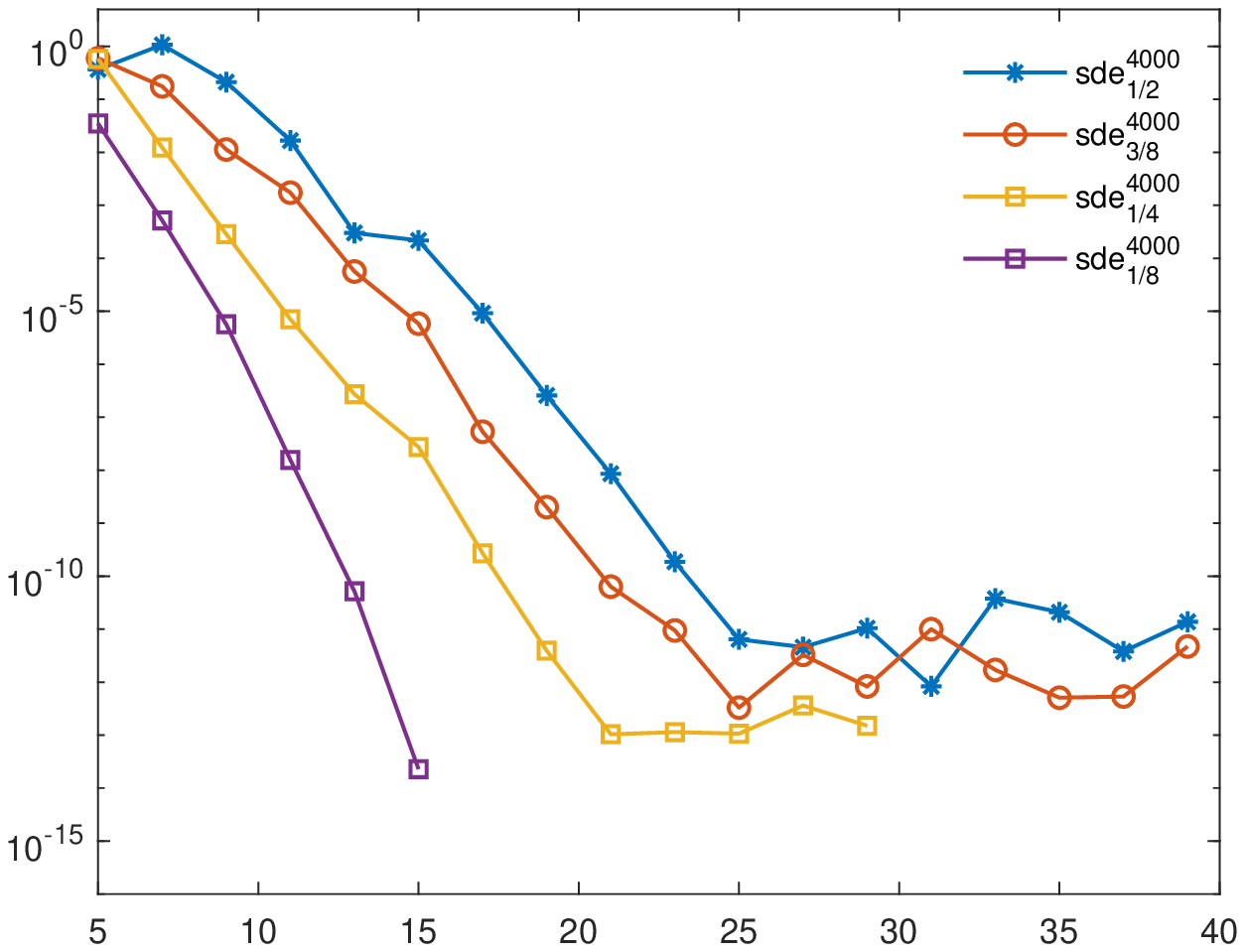}
}}
\caption{Relative errors \eqref{function_error}-\eqref{second_order_error}
for function $f_3$ by using $4000$ Halton points with $\overline{\mathbf{x}}%
=(0.95,0.5)$. }
\label{fig:Cosine_Halton_side}
\end{figure}

\begin{figure}[t]
{\small \centering 
\parbox{.32\linewidth}{\centering
    \includegraphics[width=1.1\linewidth]{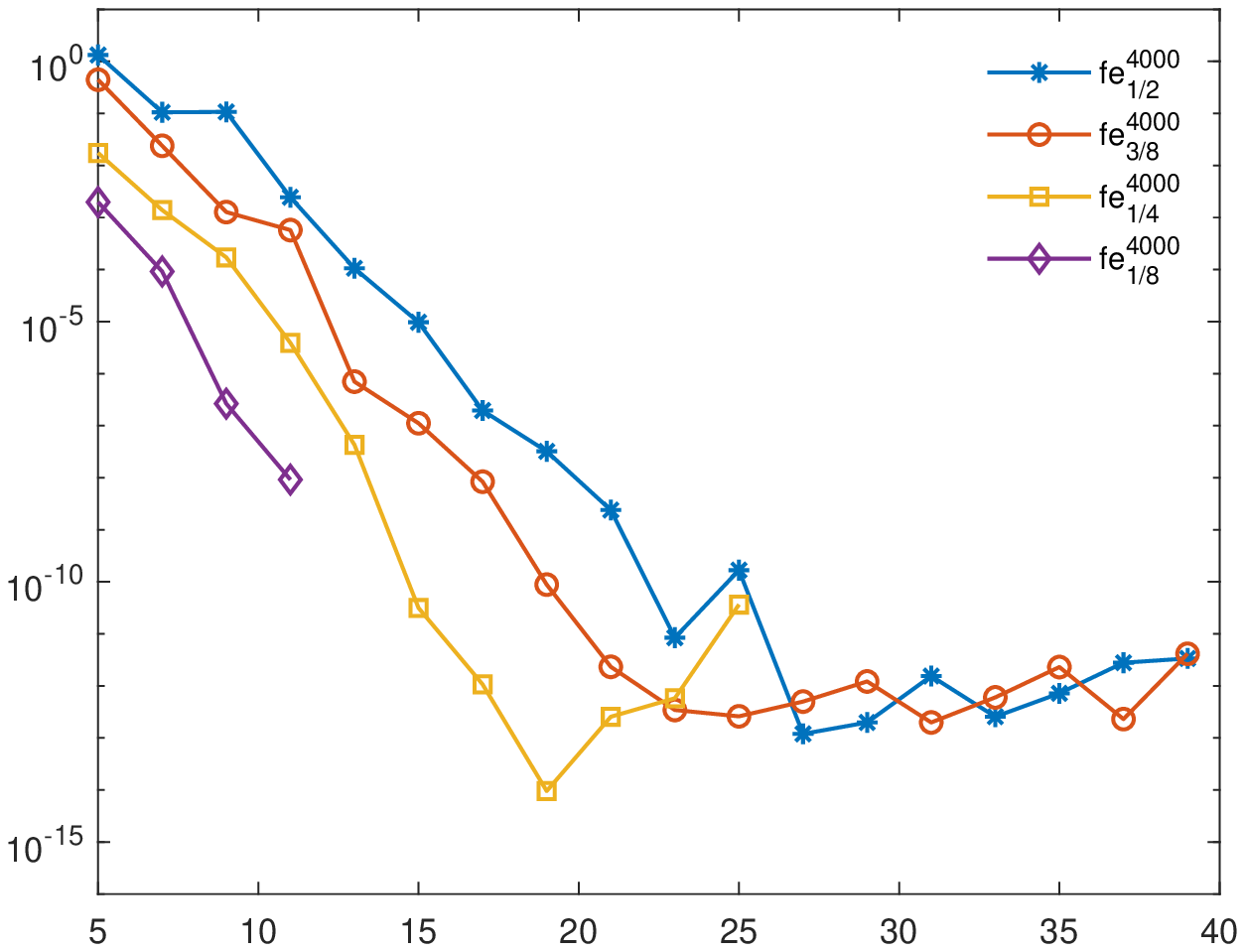}
} 
\parbox{.32\linewidth}{\centering
    \includegraphics[width=1.1\linewidth]{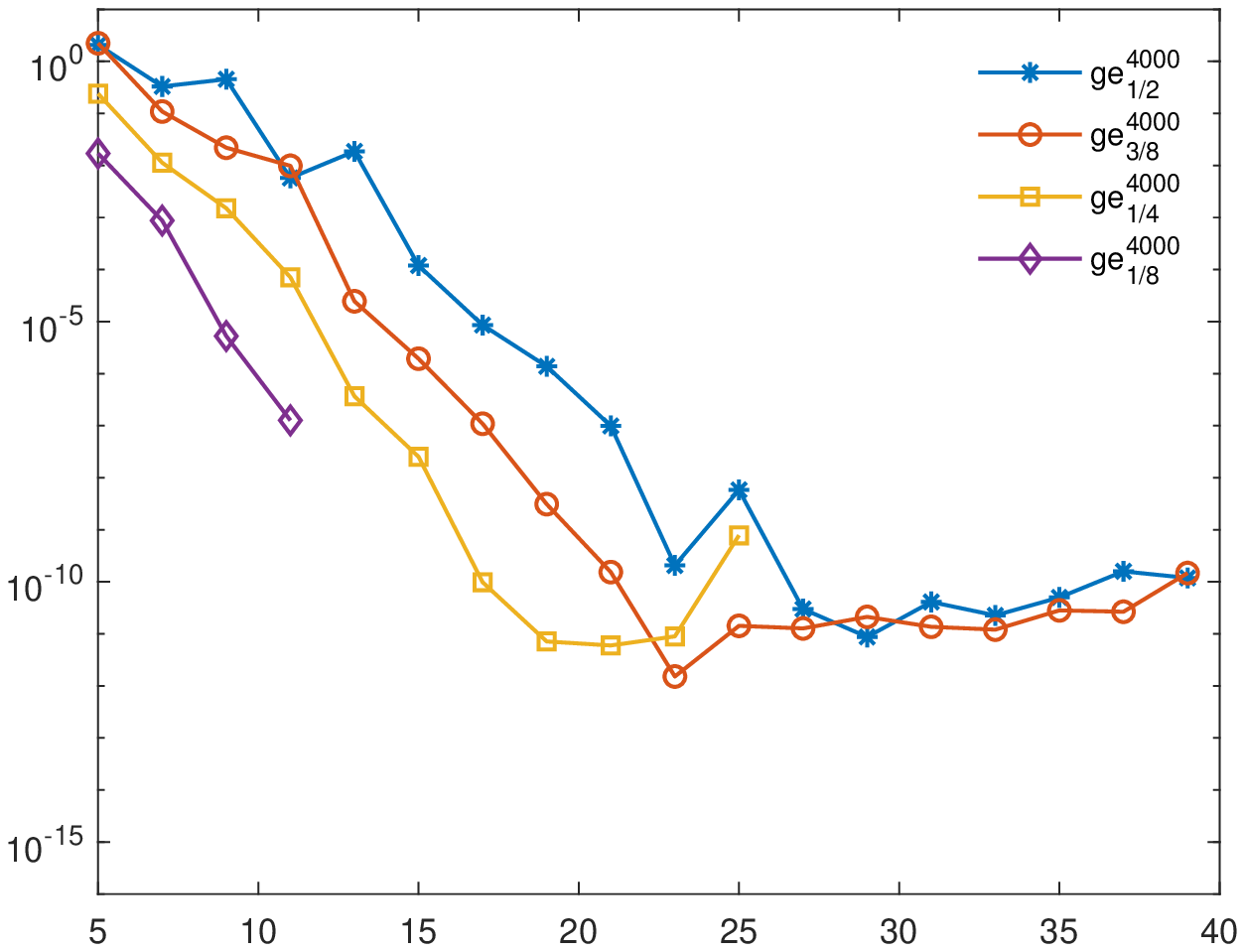}
} 
\parbox{.32\linewidth}{\centering
    \includegraphics[width=1.1\linewidth]{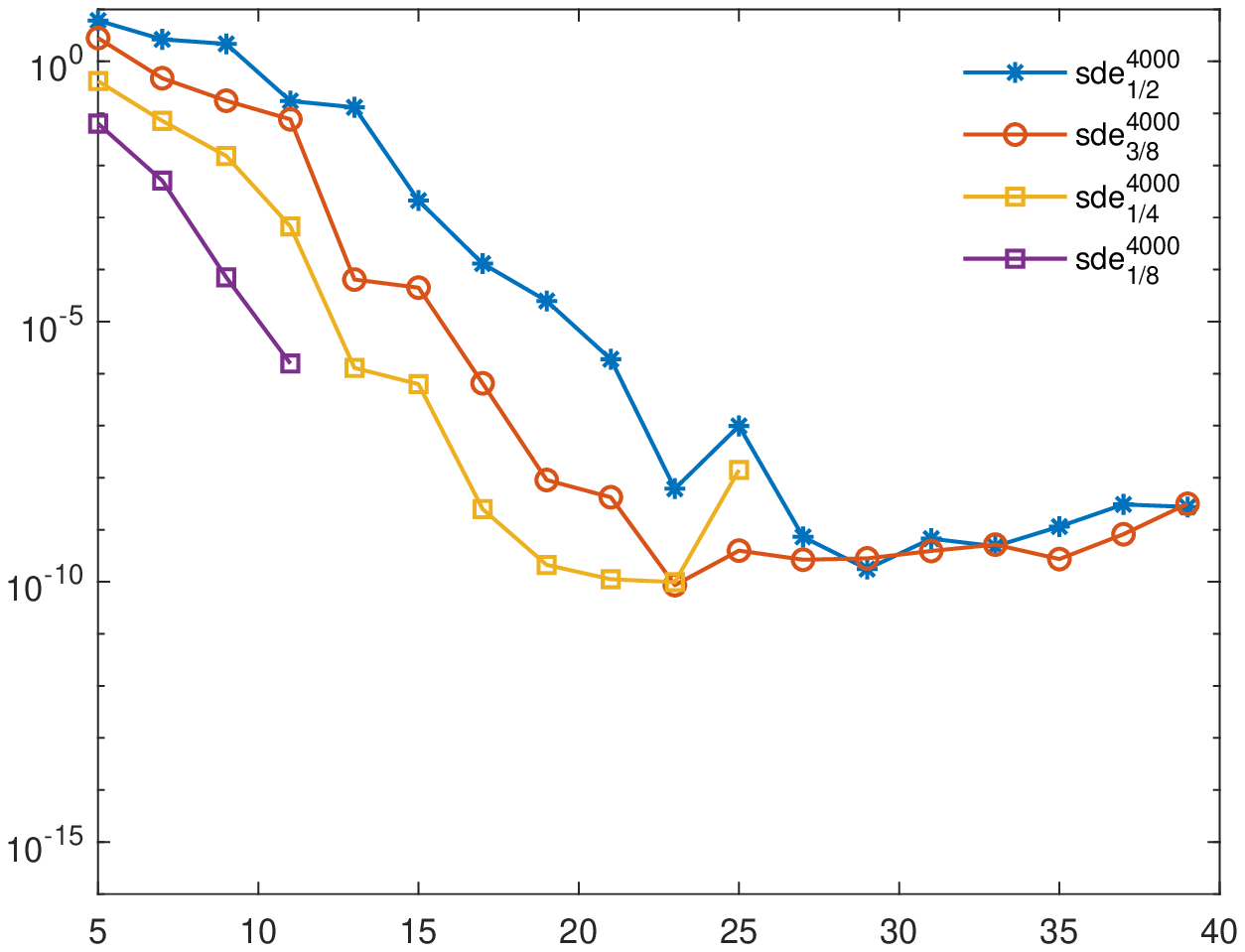}
}}
\caption{Relative errors \eqref{function_error}-\eqref{second_order_error}
for function $f_3$ by using $4000$ Halton points with $\overline{\mathbf{x}}%
=(1,0.5)$.}
\label{fig:Cosine_4000_Halton_exact_side}
\end{figure}
\begin{figure}[t]
{\small \centering 
\parbox{.32\linewidth}{\centering
    \includegraphics[width=1.1\linewidth]{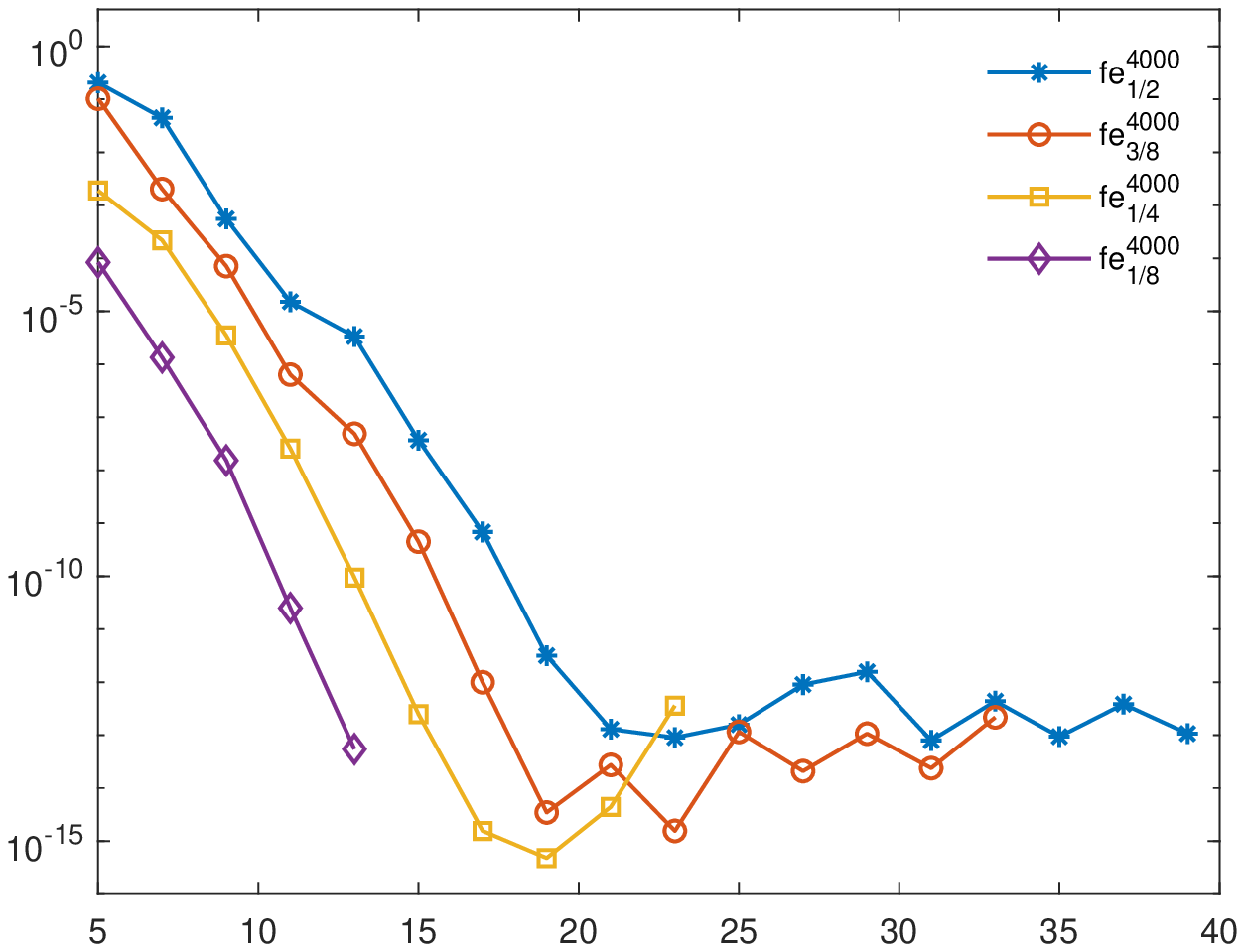}
} 
\parbox{.32\linewidth}{\centering
    \includegraphics[width=1.1\linewidth]{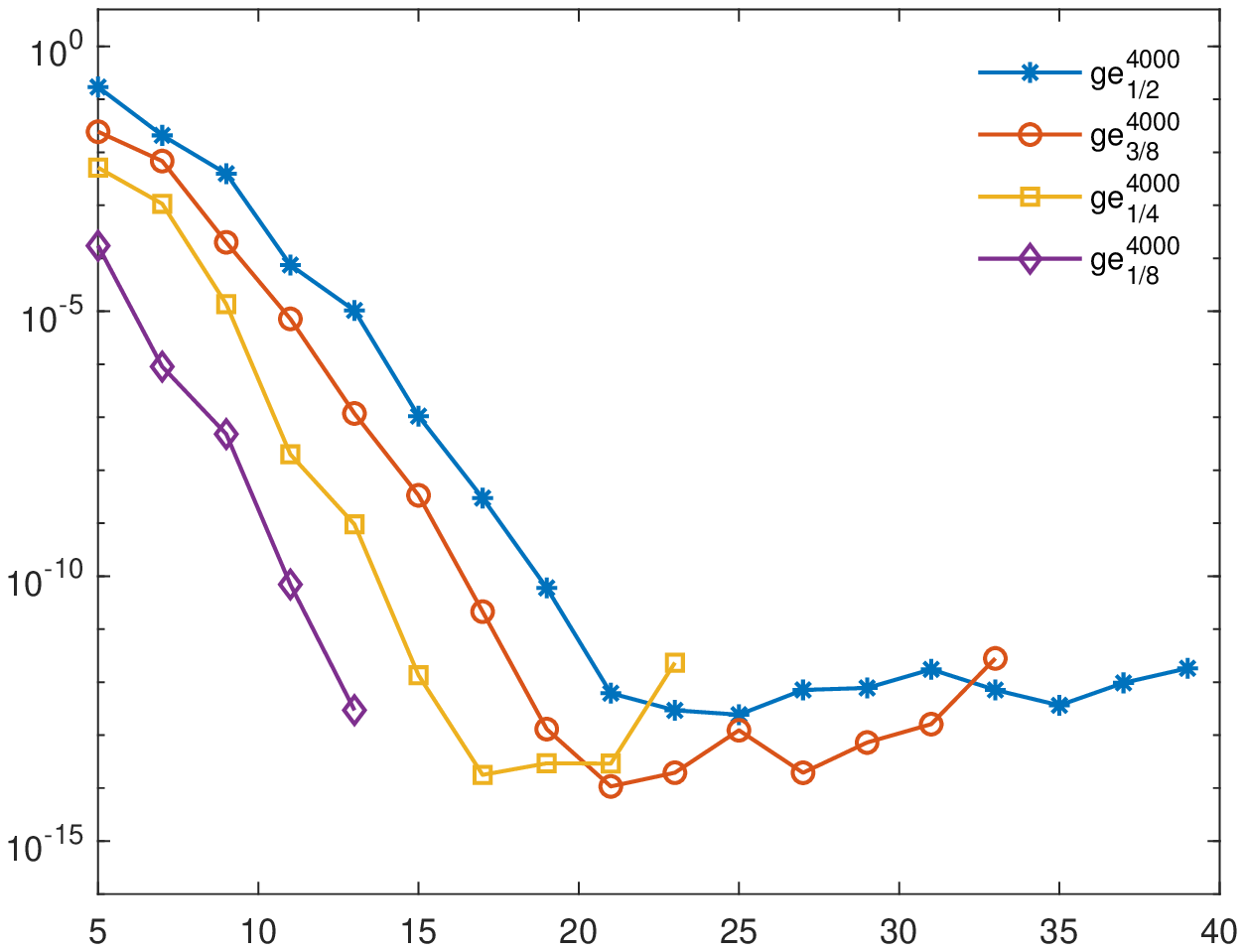}
} 
\parbox{.32\linewidth}{\centering
    \includegraphics[width=1.1\linewidth]{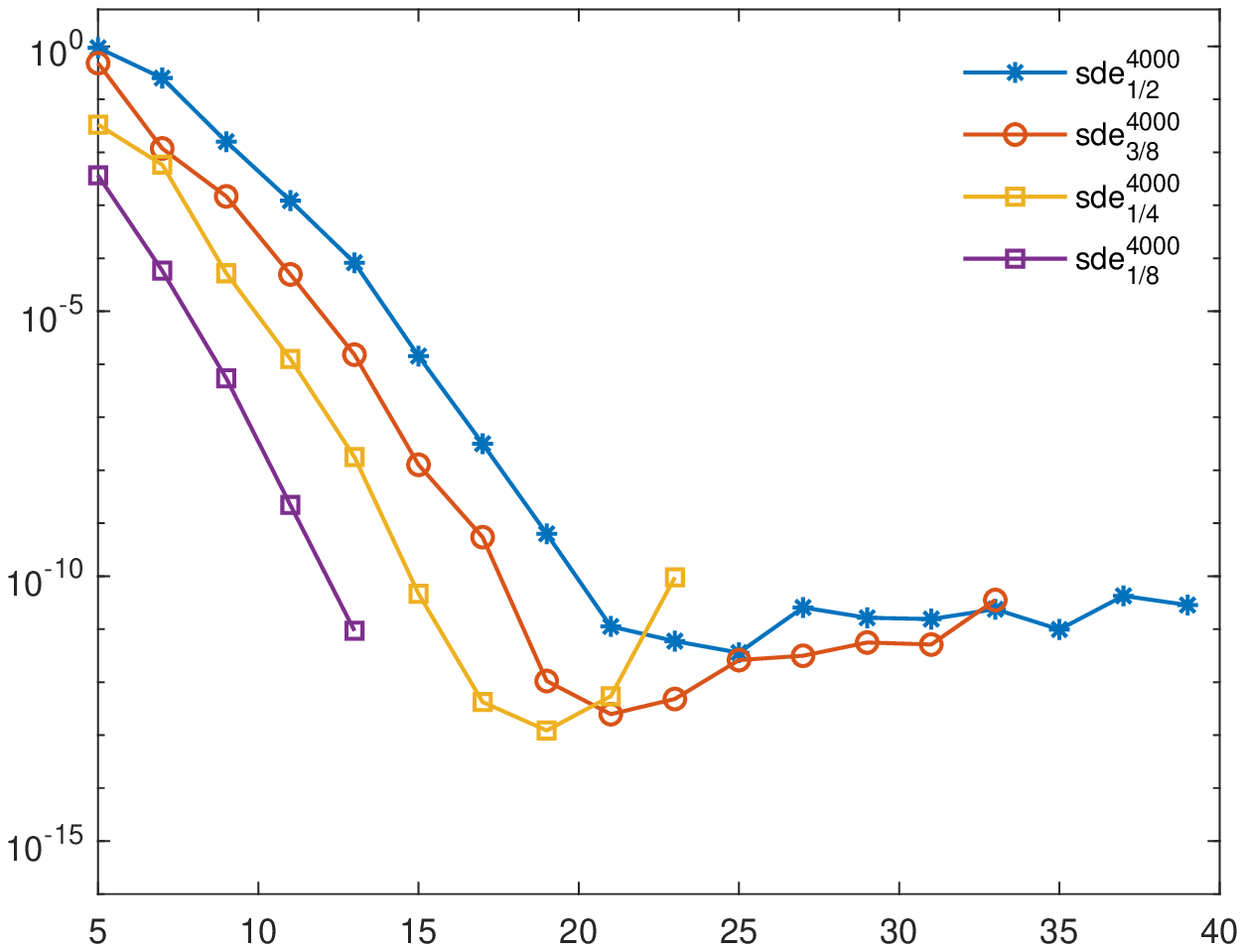}
}}
\caption{Relative errors \eqref{function_error}-\eqref{second_order_error}
for function $f_3$ by using $4000$ Halton points with $\overline{\mathbf{x}}%
=(0.95,0.95)$.}
\label{fig:Cosine_Halton_vertex}
\end{figure}

\begin{figure}[t]
{\small \centering 
\parbox{.32\linewidth}{\centering
    \includegraphics[width=1.1\linewidth]{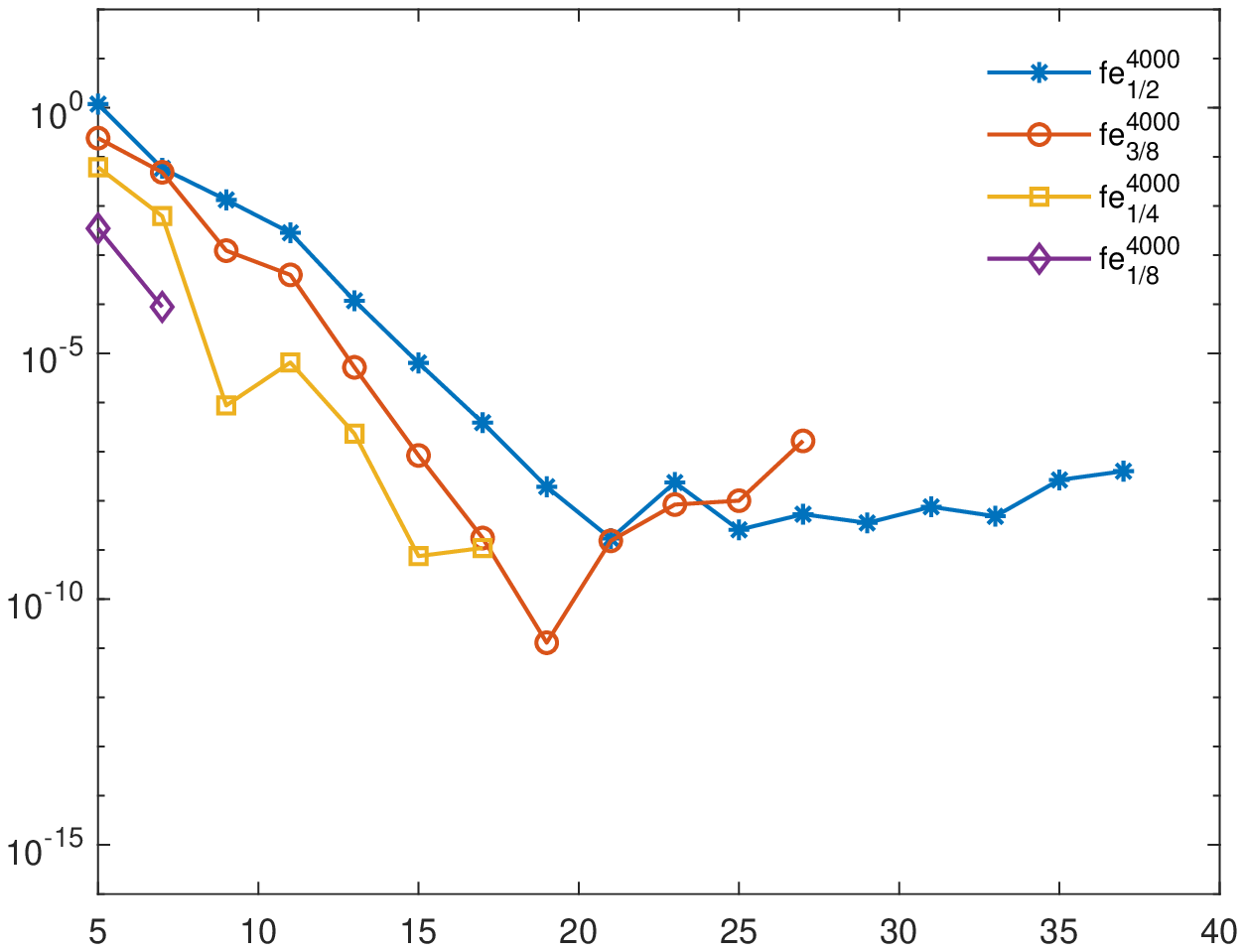}
} 
\parbox{.32\linewidth}{\centering
    \includegraphics[width=1.1\linewidth]{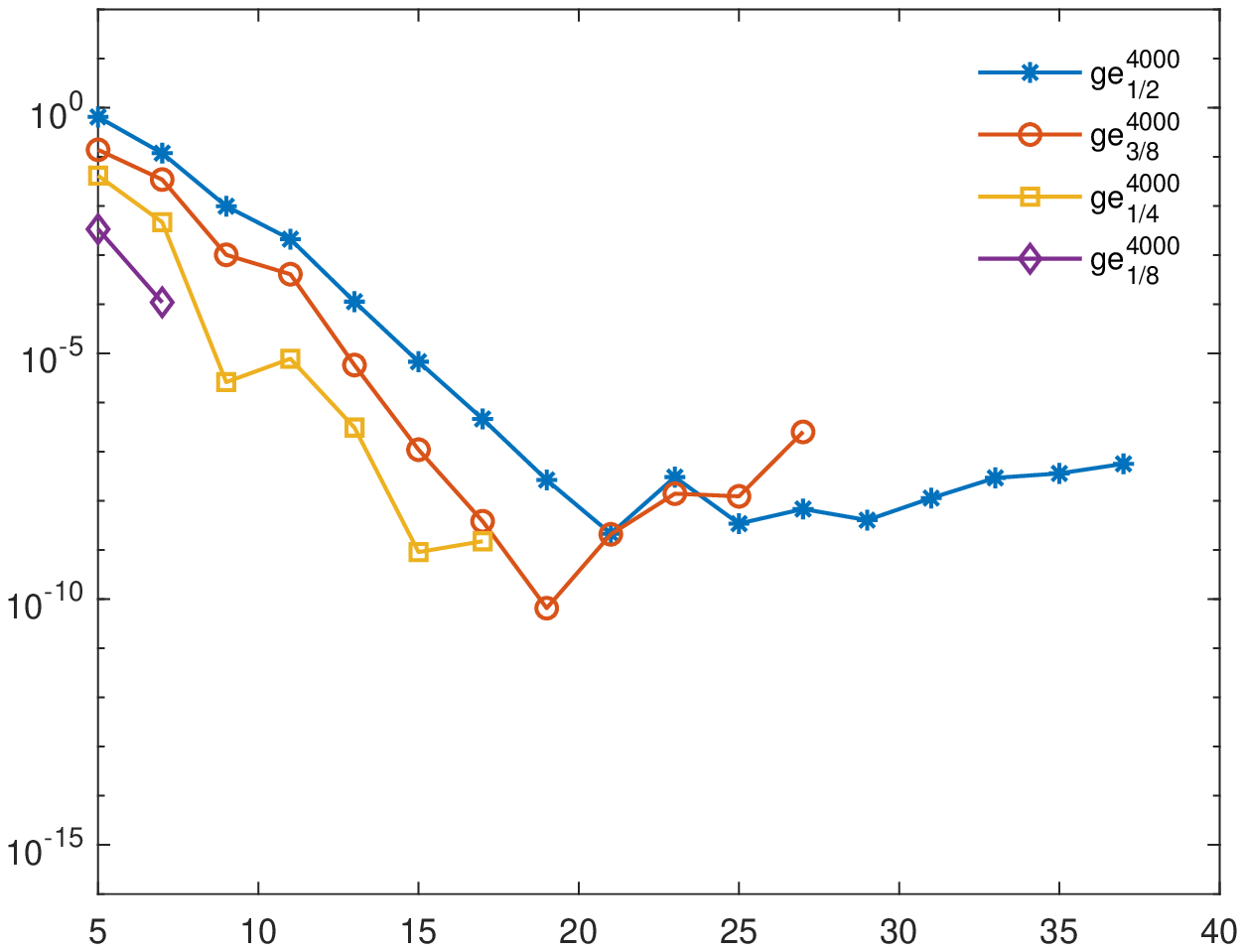}
} 
\parbox{.32\linewidth}{\centering
    \includegraphics[width=1.1\linewidth]{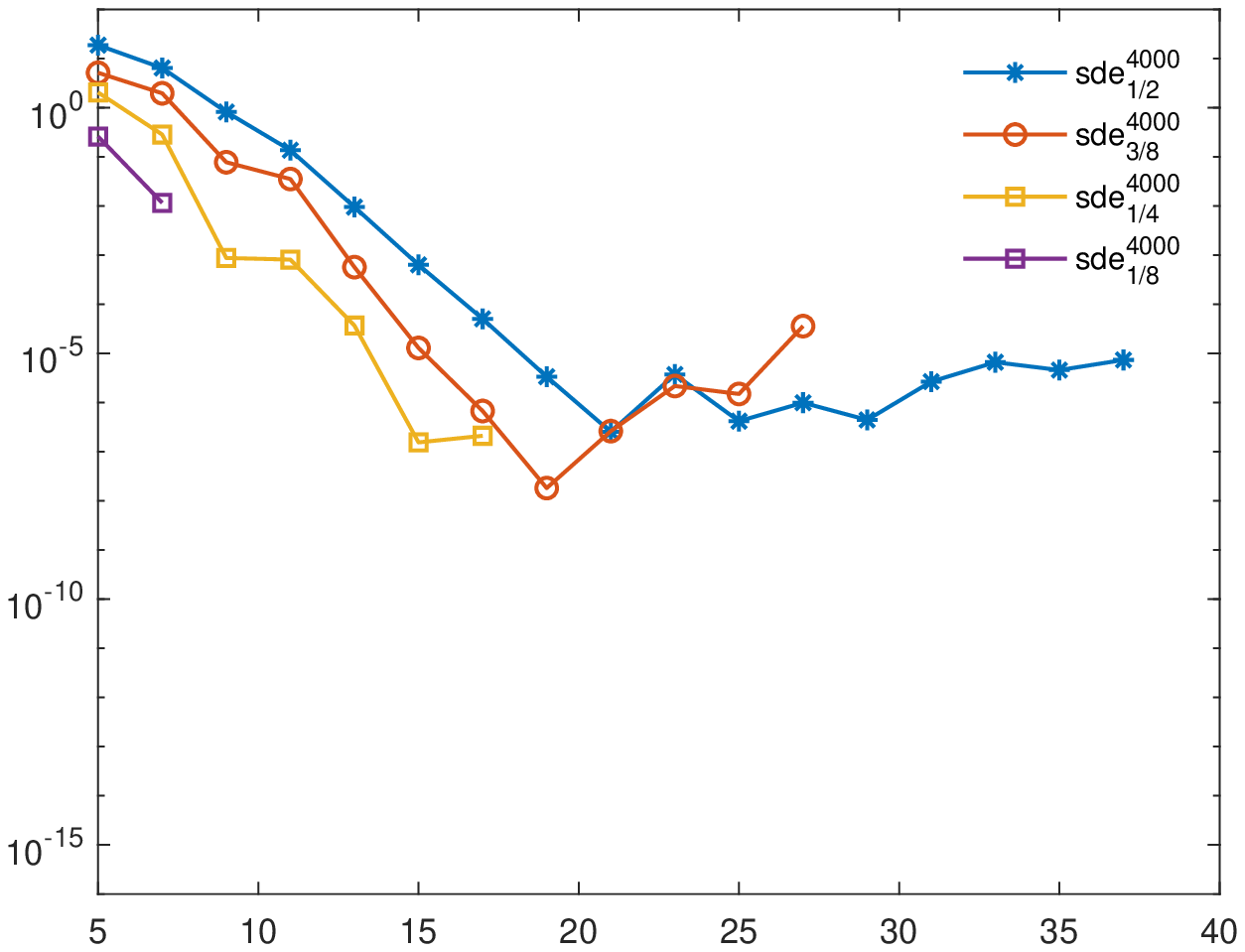}
}}
\caption{Relative errors \eqref{function_error}-\eqref{second_order_error}
for function $f_3$ by using $4000$ Halton points with $\overline{\mathbf{x}}%
=(1,1)$.}
\label{fig:Cosine_4000_Halton_exact_vertex}
\end{figure}

In the last experiment, for each test function $f_{k}$, $k=1,2,3$, we
include a random noise in the $m$ function values, namely 
\begin{equation}
\tilde{y}=y+U(-\varepsilon ,\varepsilon ),  \label{perturbation}
\end{equation}%
where $U(-\varepsilon ,\varepsilon )$ denotes the multivariate uniform
distribution in $[-\varepsilon ,\varepsilon ]^{m}$. In Figure \ref%
{fig:gradient_perturb} we display the relative error 
\begin{equation}
gep=\frac{\left\Vert \nabla {f}(\overline{\mathbf{x}})-\nabla \widetilde{{p}}%
(\overline{\mathbf{x}})\right\Vert _{2}}{\left\Vert \nabla {f}(\overline{%
\mathbf{x}})\right\Vert _{2}}  \label{gradient_error_perturbed}
\end{equation}
for the gradient at $%
\overline{\mathbf{x}}=\left( 0.5,0.5\right) $, computed using $1000$ Halton
points with exact function values \eqref{gradient_error} and perturbed function values (\ref{gradient_error_perturbed}) for $\varepsilon =10^{-6}$.
In Figure \ref{fig:gradient_perturb_polynomial} we display the
relative sensitivity in computing the gradient of the interpolating polynomial $p$
under the perturbation of the function values ($\varepsilon =10^{-6}$) 
\begin{equation}
gs=\frac{\left\Vert \nabla {p}(\overline{\mathbf{x}})-\nabla \widetilde{{p}}(%
\overline{\mathbf{x}})\right\Vert _{2}}{\left\Vert \nabla {f}(\overline{%
\mathbf{x}})\right\Vert _{2}},  \label{gradient_error_pert}
\end{equation}%
together with its estimate involving the stability constant of the gradient %
\eqref{stabconst} 
\begin{equation}
gse=\frac{\left\Vert \left( \varepsilon
h^{-1}\sum\limits_{i=1}^{m}\left\vert a_{(1,0),h}^{i}\right\vert
,\varepsilon h^{-1}\sum\limits_{i=1}^{m}\left\vert
a_{(0,1),h}^{i}\right\vert \right) \right\Vert _{2}}{\left\Vert \nabla {f}(%
\overline{\mathbf{x}})\right\Vert _{2}}.
\label{gradient_error_pert_estimate}
\end{equation}%
Notice that the relative errors $gep$ in Figure \ref{fig:gradient_perturb}
and sensitivity $gs$ in Figure \ref{fig:gradient_perturb_polynomial} are of the same
order of magnitude when the errors $ge$ become negligible with respect to $gs$. Moreover, $gse$ turns out to be a slight overestimate of the relative sensitivity $gs$. This fact, as we can see in Table \ref{tab:estimates_pertub} where $\overline{\mathbf{x}}=(0.5,0.5)$, is due to the relative small values of the stability constant that varies slowly while decreasing the radius $r$ or increasing the degree of the interpolating polynomial.

Finally, it is worth stressing that the stability constant \eqref{stabconst} is a function of $\overline{\mathbf{x}}$ and then it depends on the position of $\overline{\mathbf{x}}$ in the square. More precisely, for a fixed interpolation degree $d$, it slowly varies except for a neighborhood of the boundary where it increases rapidly, expecially near the vertices, as can be observed in Figure \ref{fig:stability_constant_plot}, where for clarity we restrict the stability constant to lines. On the other hand, it can be noticed that by increasing the degree, the stability constant tends to increase, much more rapidly at the boundary. Such a behavior, that explains the worsening of the accuracy at the
boundary (see Figures \ref{fig:Cosine_Halton_side}-\ref{fig:Cosine_4000_Halton_exact_side}) and at the vertex (see Figures \ref{fig:Cosine_Halton_vertex}-\ref{fig:Cosine_4000_Halton_exact_vertex}) of the
square, can be ascribed to the fact that the stability functions \eqref{stability_constant}
increase rapidly near the boundary of the local interpolation
domains $B_h(\overline{\mathbf{x}}) \cap \Omega$. This phenomenon, that
resembles the behavior at the boundary of Lebesgue functions of
univariate interpolation at equispaced
points \cite{trefethen2019approximation}, is worth of further investigation.

\begin{table}[tbp]
\begin{center}
{\scriptsize \ 
\begin{tabular}{c@{\hspace{.4cm}}c|@{\hspace{.4cm}}c|@{\hspace{.4cm}}c|@{\hspace{.4cm}}c|@{\hspace{.4cm}}c|@{\hspace{.4cm}}c|@{\hspace{.4cm}}c}
&  &  & $d=5$ & $d=10$ & $d=15$ & $d=20$ & $d=25$ \\ \hline
&  & $r=1/2$ & 2.31 & 2.43 & 6.69 & 2.41e+1 & 3.51e+1 \\ 
& $\left \vert \nu\right \vert =0$ & $r=3/8$ & 1.75 & 4.10 & 1.11e+1 & 
2.91e+1 & 3.03e+1 \\ 
&  & $r=1/4$ & 2.14 & 4.73 & 7.16 & - & - \\ 
&  & $r=1/8$ & 1.80 & - & - & - & - \\ \hline
&  & $r=1/2$ & 2.63e+1 & 7.26e+1 & 4.53e+2 & 9.06e+2 & 7.74e+2 \\ 
& $\left \vert \nu\right \vert =1$ & $r=3/8$ & 2.85e+1 & 1.64e+2 & 3.51e+2 & 
6.04e+2 & 9.55e+2 \\ 
&  & $r=1/4$ & 3.61e+1 & 1.67e+2 & 3.84e+2 & - & - \\ 
&  & $r=1/8$ & 1.27e+2 & - & - & - & - \\ \hline
&  & $r=1/2$ & 9.94e+1 & 1.41e+3 & 3.30e+3 & 1.82e+4 & 3.05e+4 \\ 
& $\left \vert \nu\right \vert =2$ & $r=3/8$ & 1.72e+2 & 2.80e+3 & 7.94e+3 & 
3.61e+4 & 5.15e+4 \\ 
&  & $r=1/4$ & 4.02e+2 & 4.54e+3 & 2.02e+4 & - & - \\ 
&  & $r=1/8$ & 1.73e+3 & - & - & - & - \\ \hline
\end{tabular}%
}
\end{center}
\caption{Numerical comparison among the mean value of the 
stability constant (\protect\ref{stabconst}) with $\left\vert \protect\nu %
\right\vert \leq 2$, for interpolation at a sequence of degrees $d$ on
Discrete Leja points extracted from the subset of $1000$ Halton points in $%
[0,1]^2$ contained in the ball $B_r(\overline{\mathbf{x}})$ centered at $%
\overline{\mathbf{x}}=(0.5,0.5)$.}
\label{tab:estimates_pertub}
\end{table}

\begin{figure}[t]
{\small \centering 
\parbox{.32\linewidth}{\centering
    \includegraphics[width=1.1\linewidth]{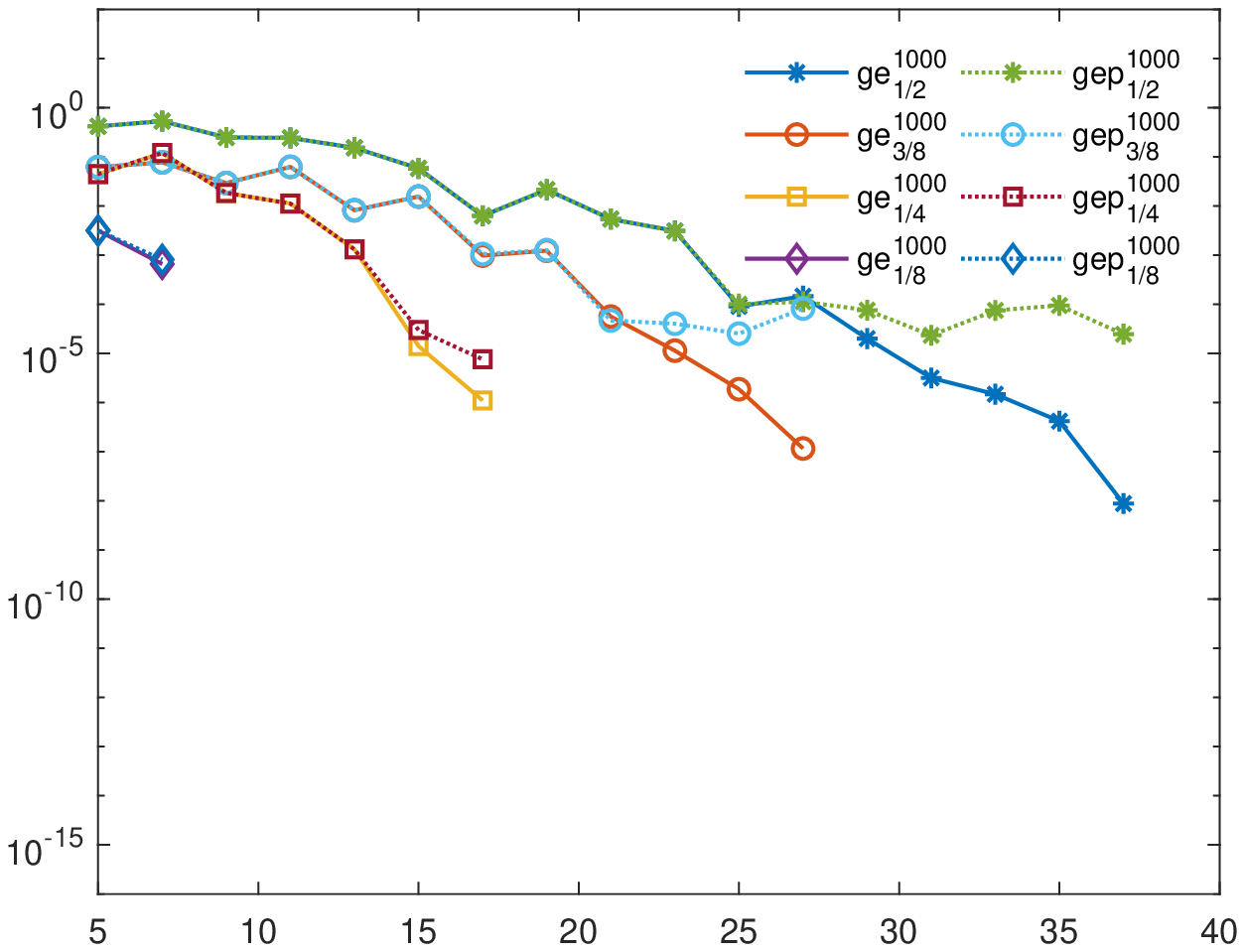}
} 
\parbox{.32\linewidth}{\centering
    \includegraphics[width=1.1\linewidth]{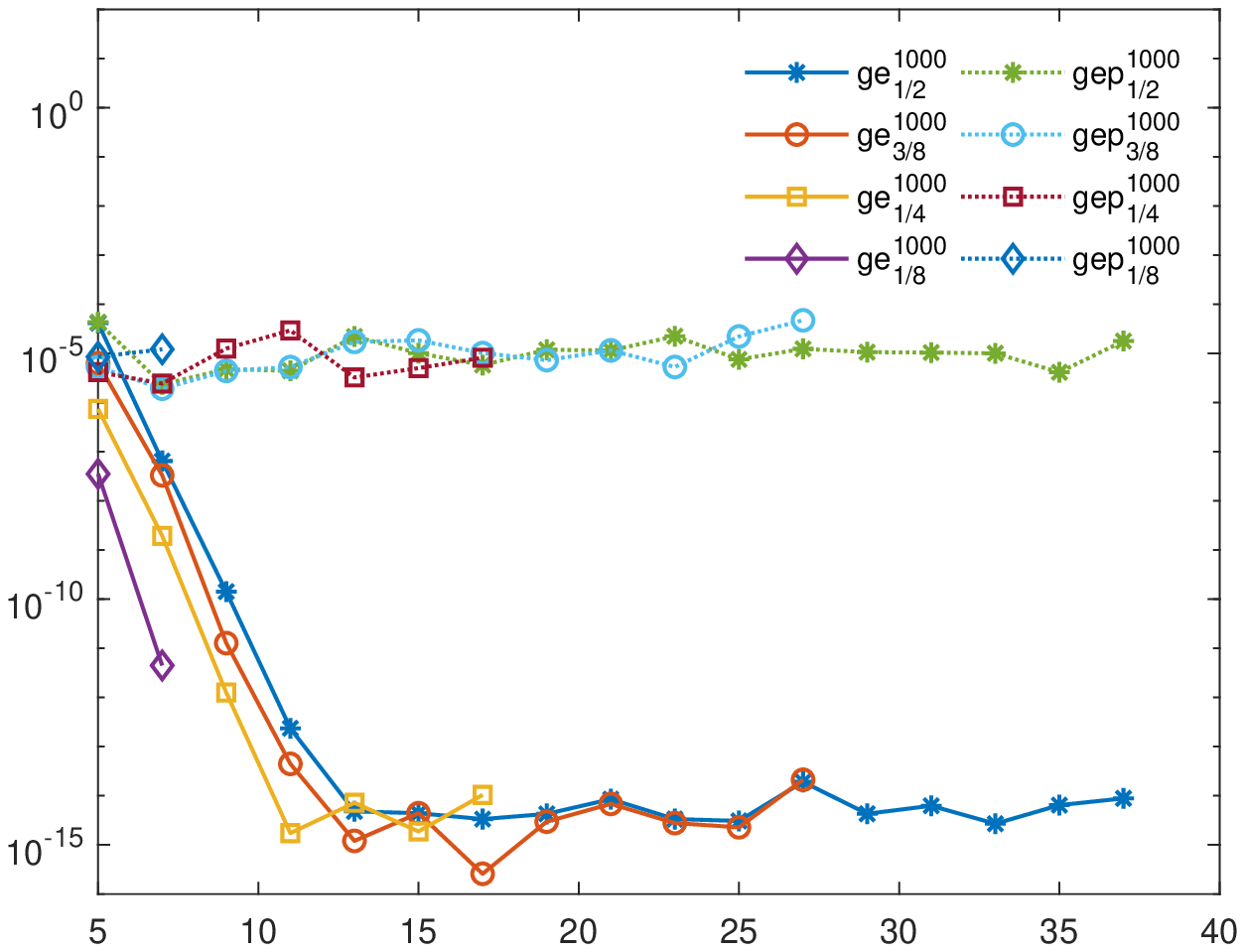}
} 
\parbox{.32\linewidth}{\centering
    \includegraphics[width=1.1\linewidth]{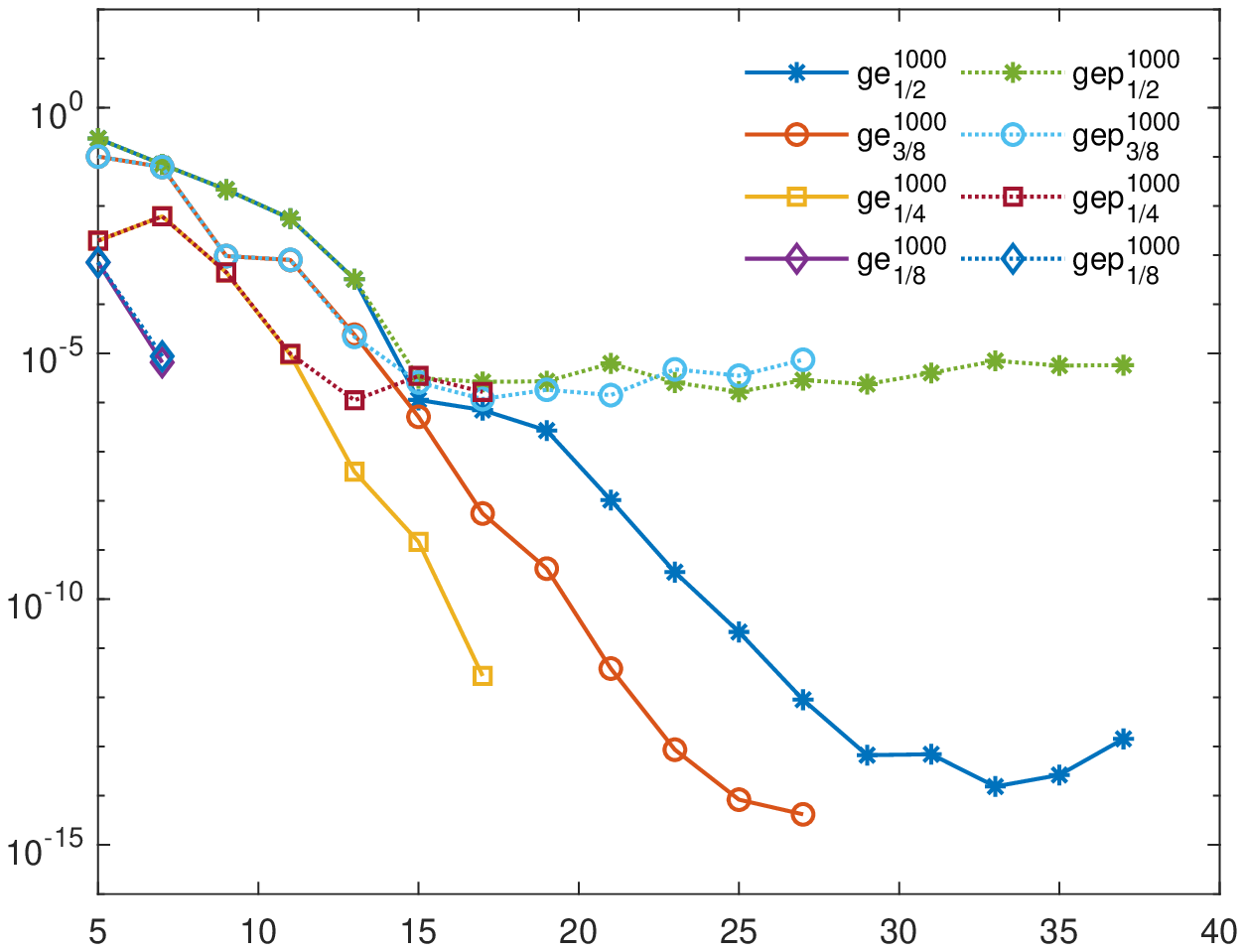}
}}
\caption{Relative error \eqref{gradient_error} for the gradient
computed with exact ($ge$) and perturbed ($gep$) function values 
\eqref{perturbation} for the functions $f_1$ (left), $f_2$ (center) and $f_3$
(right) by using $1000$ Halton points with $\overline{\mathbf{x}}=(0.5,0.5)$.%
}
\label{fig:gradient_perturb}
\end{figure}

\begin{figure}[t]
{\small \centering 
\parbox{.32\linewidth}{\centering
    \includegraphics[width=1.1\linewidth]{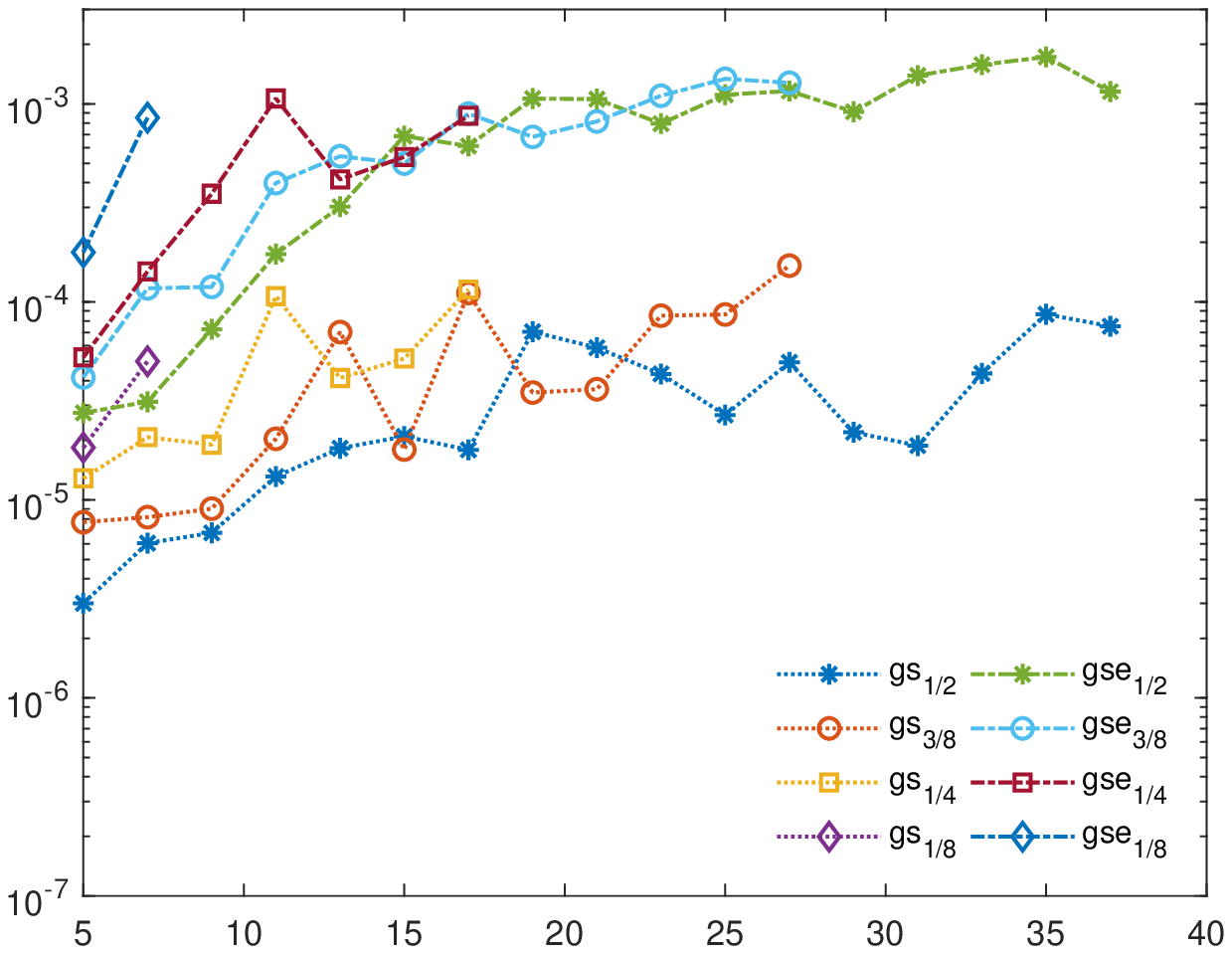}
} 
\parbox{.32\linewidth}{\centering
    \includegraphics[width=1.1\linewidth]{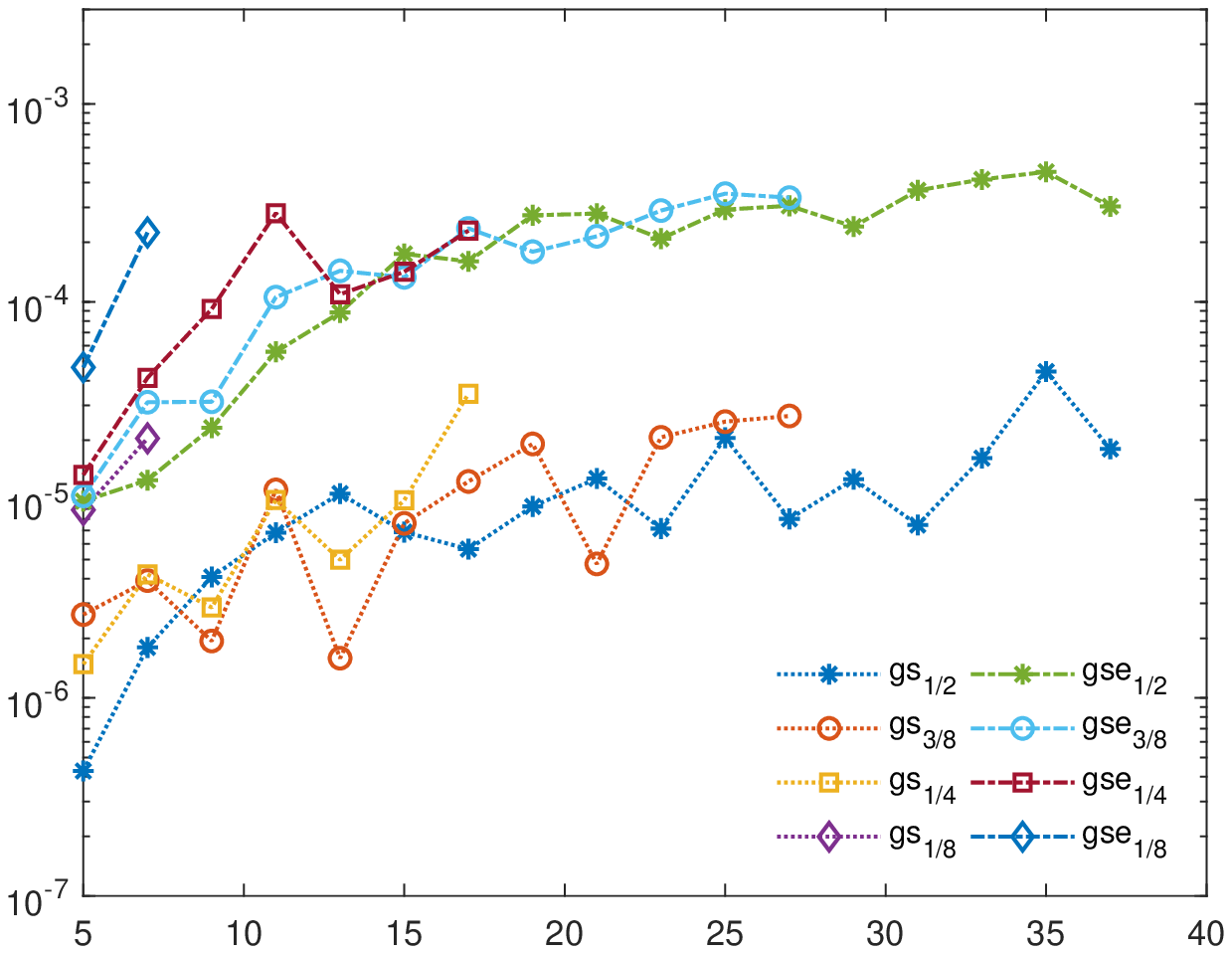}
} 
\parbox{.32\linewidth}{\centering
    \includegraphics[width=1.1\linewidth]{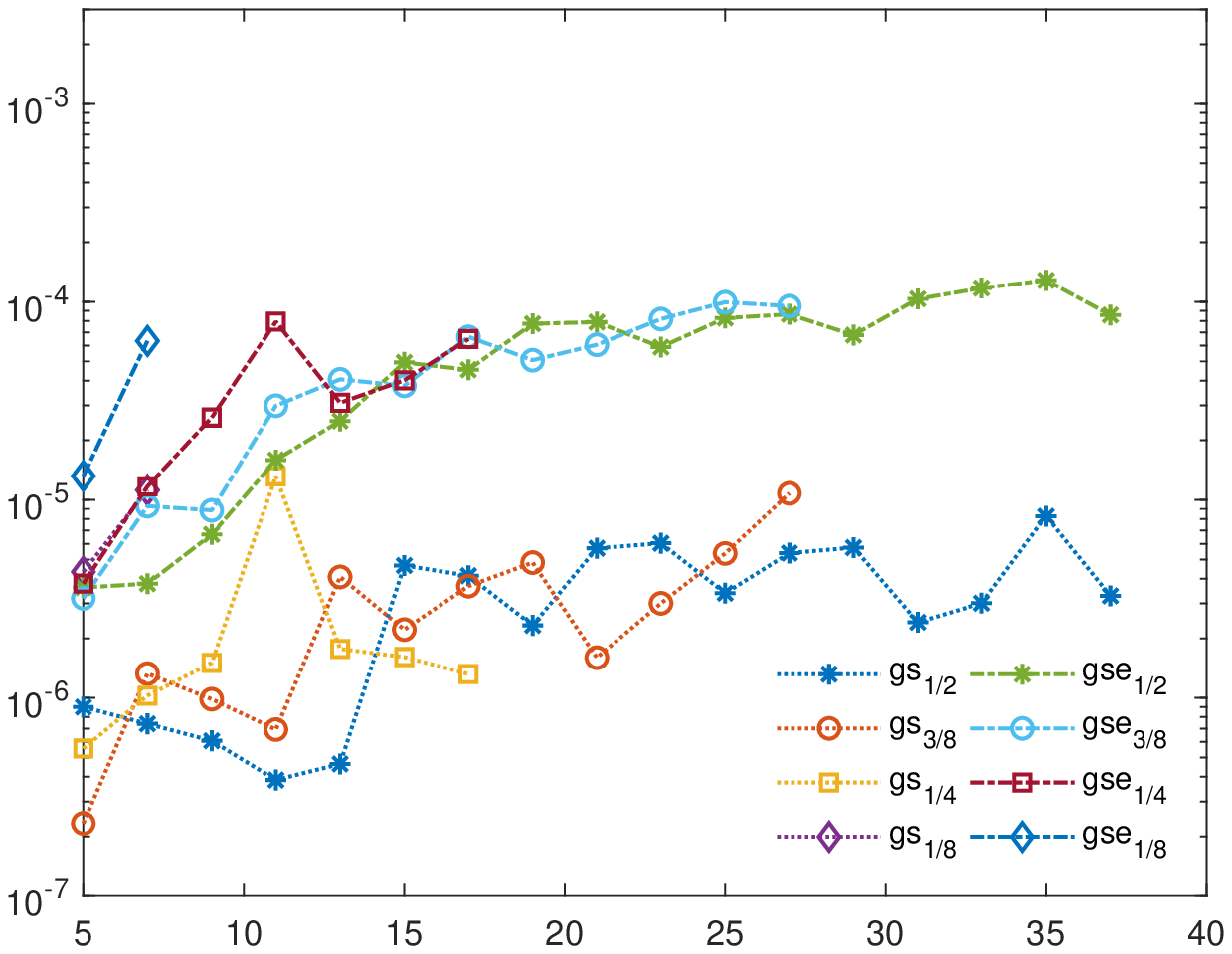}
}}
\caption{Relative sensitivity \eqref{gradient_error_pert} for the
gradient of $p$ computed with perturbed function values ($gs$) and its
estimate \eqref{gradient_error_pert_estimate} involving the stability
constant of the gradient ($gse$) for the functions $f_1$ (left), $f_2$
(center) and $f_3$ (right) by using $1000$ Halton points with $\overline{%
\mathbf{x}}=(0.5,0.5)$.}
\label{fig:gradient_perturb_polynomial}
\end{figure}


\begin{figure}[t]
{\small \centering 
\parbox{.32\linewidth}{\centering
    \includegraphics[width=1.1\linewidth]{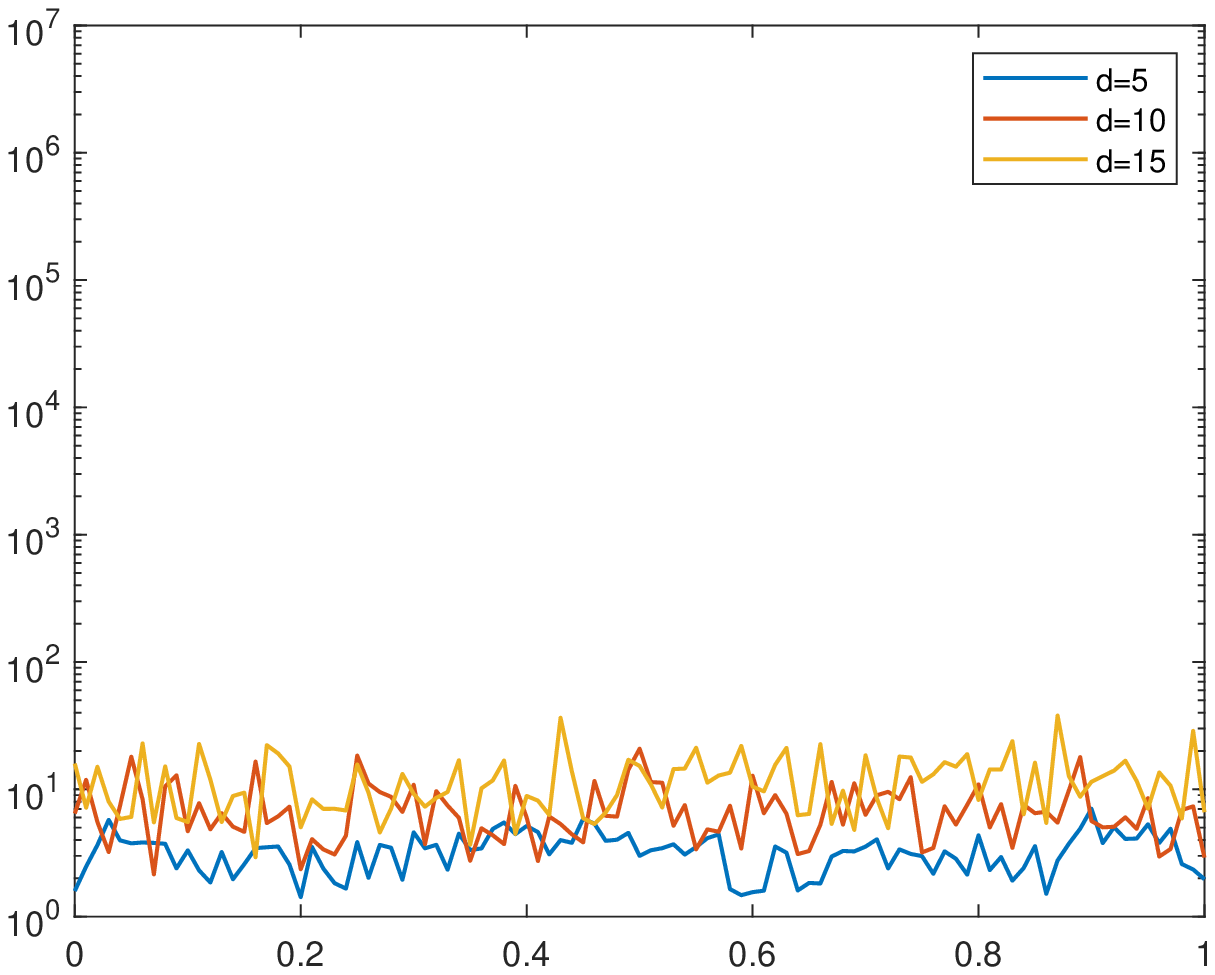}\\
    \includegraphics[width=1.1\linewidth]{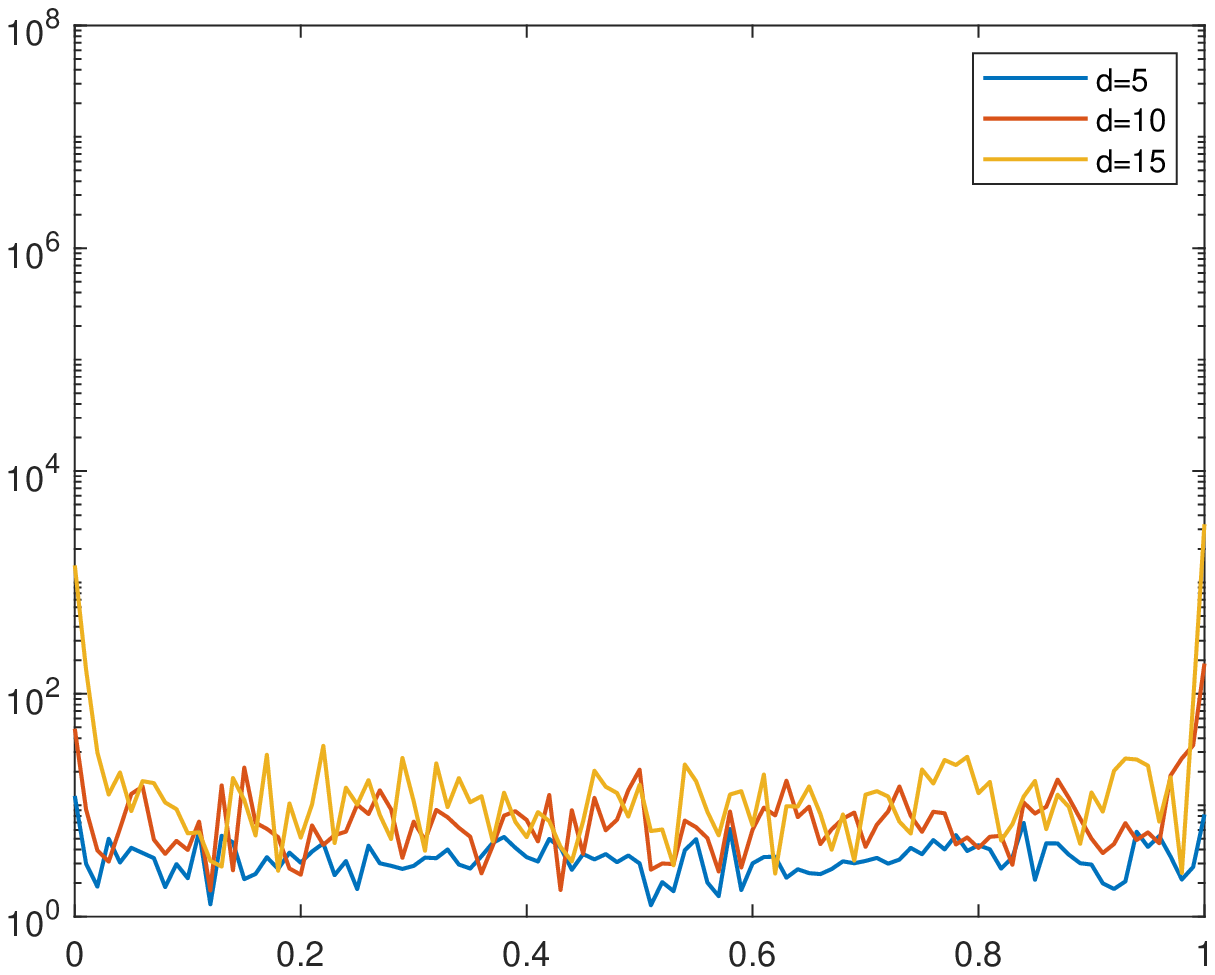}
} 
\parbox{.32\linewidth}{\centering
    \includegraphics[width=1.1\linewidth]{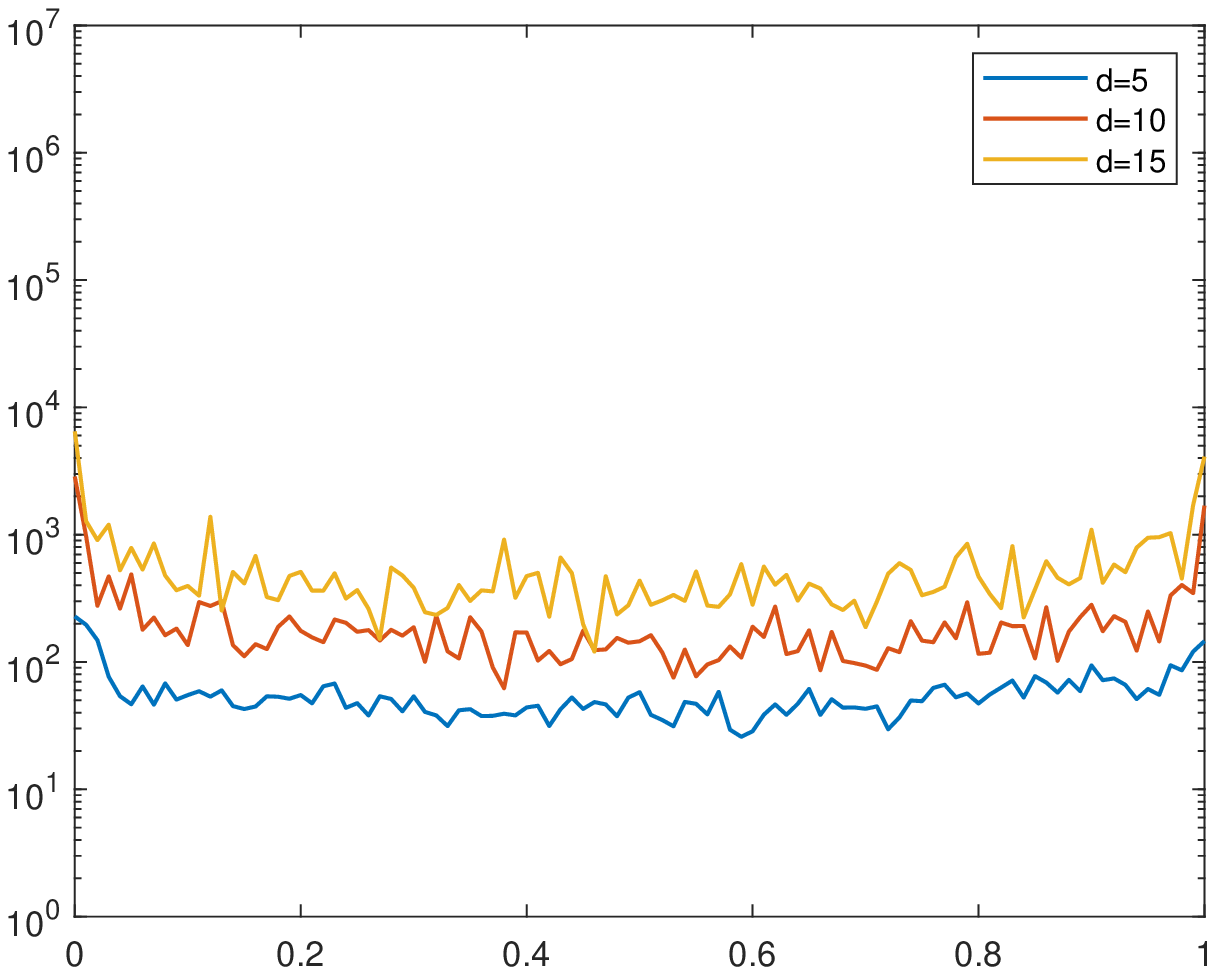}\\
     \includegraphics[width=1.1\linewidth]{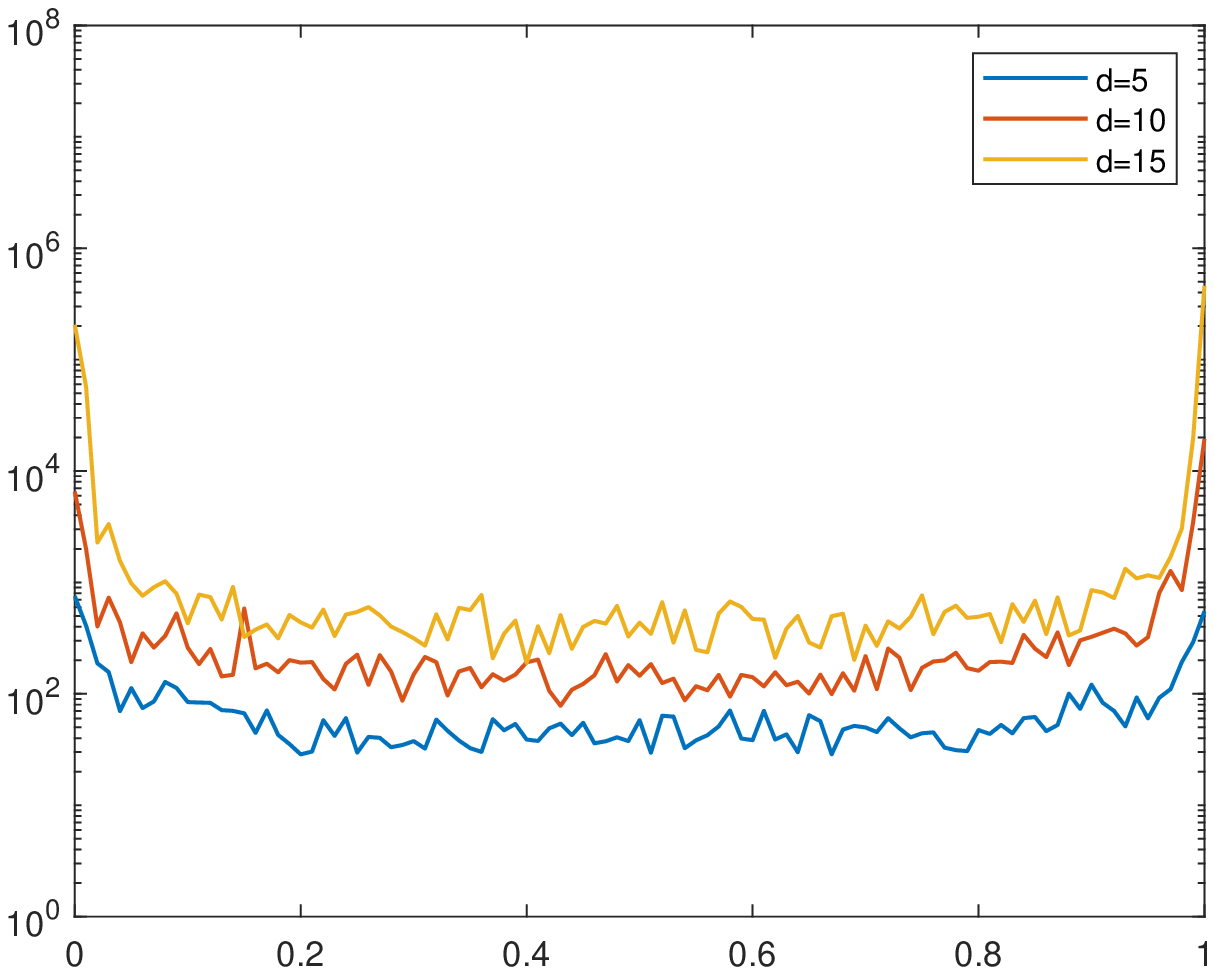}
} 
\parbox{.32\linewidth}{\centering
    \includegraphics[width=1.1\linewidth]{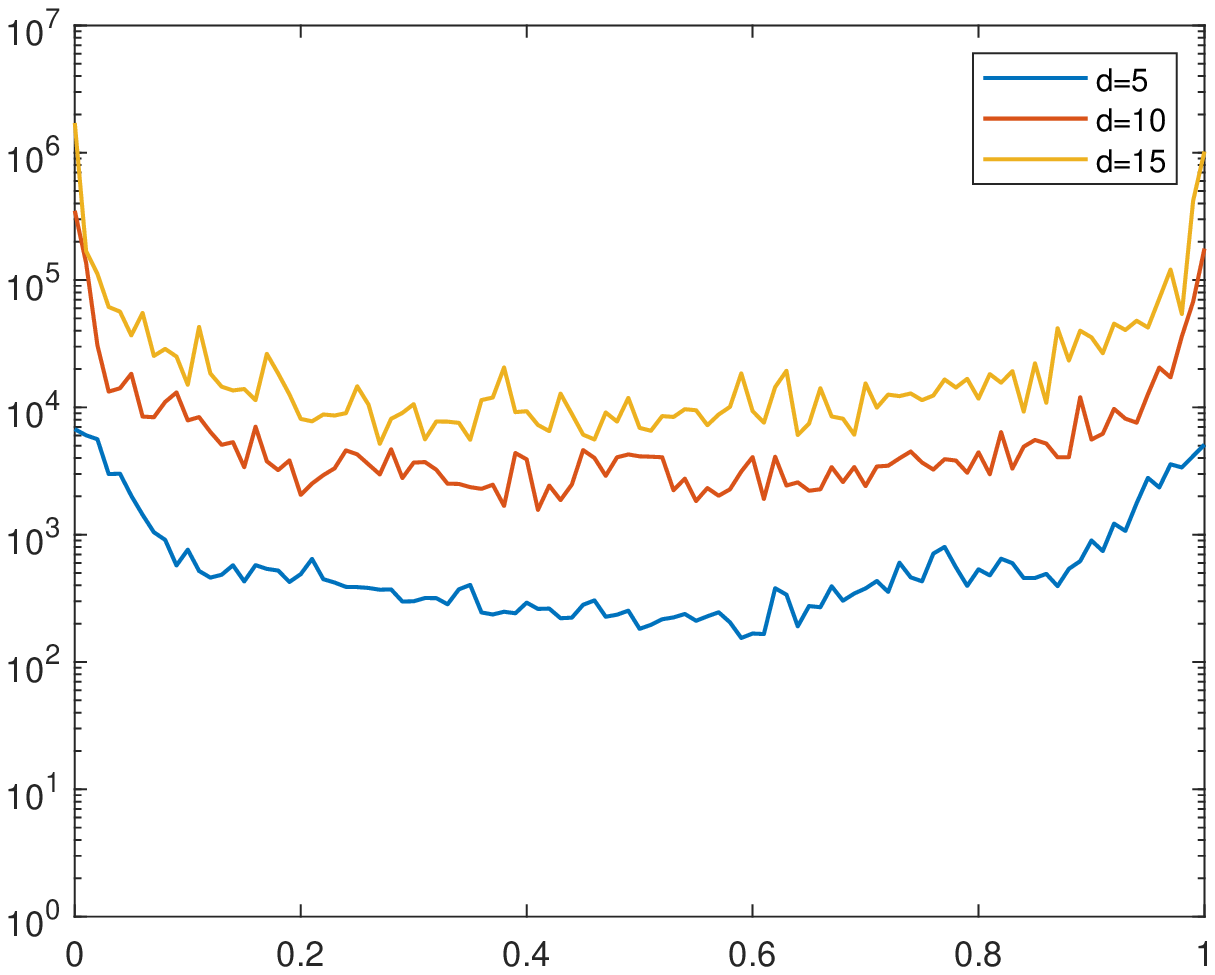}\\
    \includegraphics[width=1.1\linewidth]{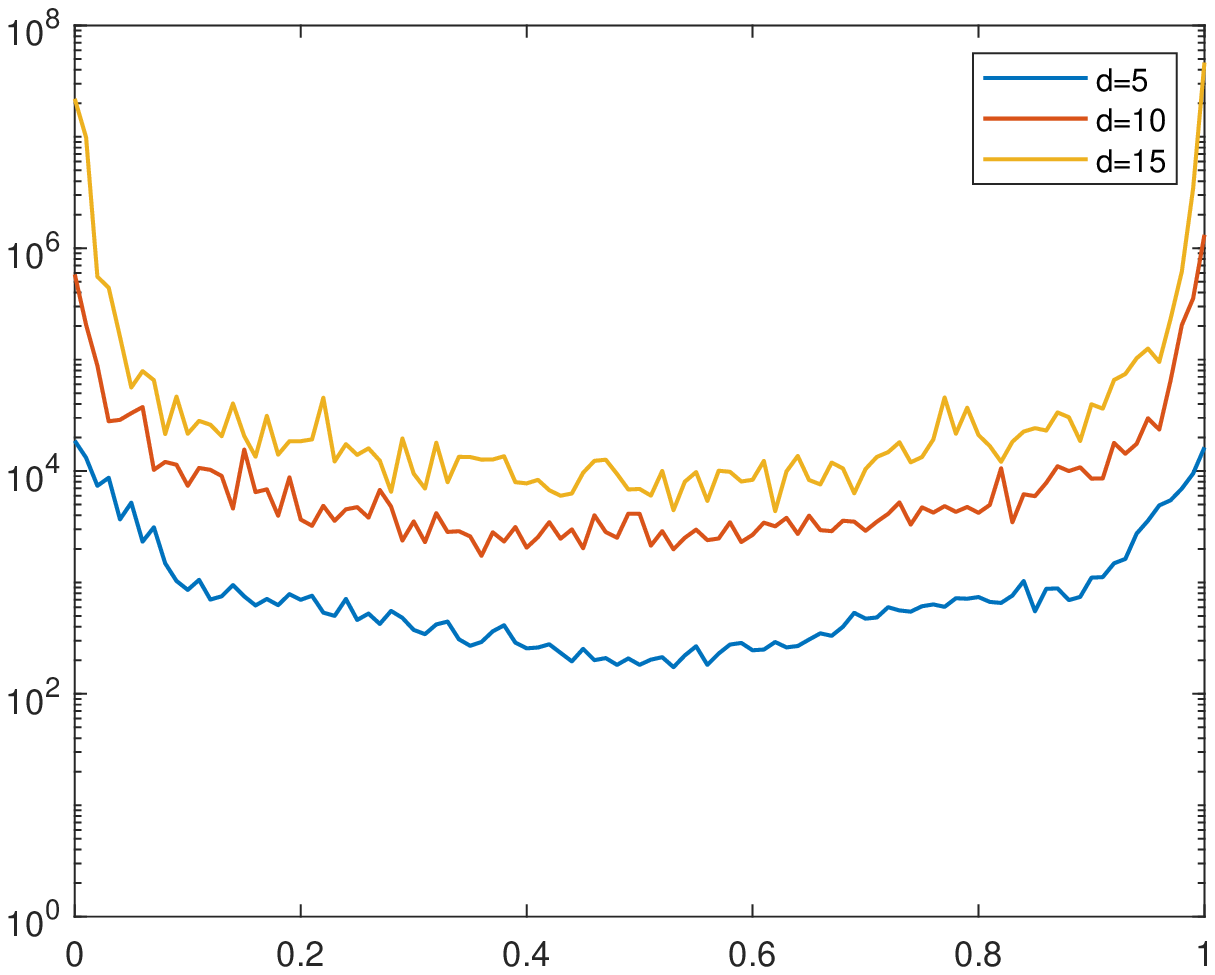}
}
}
\caption{The stability constant \eqref{stabconst}, for degrees $d=5,10,15$ and $\vert \nu \vert=0$ (left), $\vert \nu \vert=1$ (center) and $\vert \nu \vert=2$ (right) computed by using $4000$ Halton points with $r=1/2$ and $\overline{%
\mathbf{x}}$ being $101$ equispaced points on the horizontal line $y=0.5$ (top) and on the diagonal $y=x$ (bottom).}
\label{fig:stability_constant_plot}
\end{figure}

\section*{Acknowledgments}

This research has been achieved as part of RITA \textquotedblleft Research
ITalian network on Approximation'' and was supported by the GNCS-INdAM 2020 Projects ``Interpolation and smoothing: theoretical, computational and applied aspects with emphasis on image processing and data analysis'' and ``Multivariate approximation and functional equations for numerical modelling''. The third author's research was supported by the
National Center for Scientific and Technical Research (CNRST-Morocco) as
part of the Research Excellence Awards Program (No. 103UIT2019). 
The fourth author was partially supported by the DOR funds and the biennial
project BIRD 192932 of the University of Padova. \newline

\bibliographystyle{plain}
\bibliography{references}

\end{document}